\theoremstyle{plain}
\def\endproof{\hspace*{\fill}\mbox{\ \rule{.1in}{.1in}}\medskip }
\numberwithin{equation}{section}
\numberwithin{figure}{section}
\newtheorem{theorem}{Theorem}[section]
\newtheorem{corollary}[theorem]{Corollary}
\newtheorem{lemma}[theorem]{Lemma}
\newtheorem{proposition}[theorem]{Proposition}
\newcommand{\R}{\mathbb R} 
\newcommand{\ds}{\displaystyle} 
\theoremstyle{definition}
\newtheorem{remark}[theorem]{Remark}
\numberwithin{equation}{section}
\numberwithin{figure}{section}
\def\XXint#1#2#3{{\setbox0=\hbox{$#1{#2#3}{\int}$ }
\vcenter{\hbox{$#2#3$ }}\kern-.6\wd0}}
\begin{document}

\title[Elastic shells with incompatible strains]
{Models for elastic shells with incompatible strains} 
\author{Marta Lewicka, L. Mahadevan and Mohammad Reza Pakzad}
\address{Marta Lewicka, University of Pittsburgh, Department of Mathematics, 
301 Thackeray Hall, Pittsburgh, PA 15260}
\address{L. Mahadevan, Harvard University, School of Engineering and
  Applied Sciences, and Department of Physics, 
Cambridge, MA 02138}
\address{Mohammad Reza Pakzad, University of Pittsburgh, Department of Mathematics, 
301 Thackeray Hall, Pittsburgh, PA 15260}
\email{lewicka@pitt.edu, lm@seas.harvard.edu, pakzad@pitt.edu}
\subjclass{74K20, 74B20}
\keywords{non-Euclidean plates, nonlinear elasticity, Gamma convergence, calculus of variations}
 
\begin{abstract}
The three-dimensional shapes of thin lamina such as leaves, flowers, feathers, wings etc, are driven by the differential strain 
induced by the relative growth. The growth takes place through
variations in the Riemannian metric,   
given on the thin sheet as a function of location in the central plane and also across its thickness.  The shape 
is then a consequence of elastic energy minimization on the frustrated
geometrical object. Here we provide a rigorous derivation of the  
asymptotic theories for shapes of residually strained thin lamina with nontrivial curvatures, i.e. growing 
elastic shells in both  the weakly and strongly curved regimes, generalizing earlier results for the growth 
of nominally flat plates. The different theories are distinguished by the scaling of the mid-surface curvature 
relative to the inverse thickness and growth strain, and also allow us to generalize the classical F\"oppl-von 
K\'arm\'an energy to theories of prestrained 
shallow shells.  
\end{abstract} 

\maketitle

\section{Introduction}
The physical basis for {morphogenesis} is now classical and elegantly presented in D'arcy Thompson's opus \lq\lq On 
growth and form" as follows {\em "An organism is so complex a thing, and growth so complex a phenomenon, that for 
growth to be so uniform and constant in all the parts as to keep the whole shape unchanged would indeed be an unlikely 
and an unusual circumstance. Rates vary, proportions change, and the whole configuration alters accordingly."} From a 
mathematical and mechanical perspective, this reduces to a simple principle: differential growth in a body leads to 
residual strains that will generically result in  changes in the shape of a tissue, organ or body. Eventually, the growth 
patterns are expected to themselves be regulated by these strains, so that this principle might well be the basis for the 
physical self-organization of biological tissues.  Recent interest in characterizing the morphogenesis of low-dimensional 
structures such as filaments, laminae and their assemblies, is driven by the twin motivations of understanding the origin 
of shape in biological systems and the promise of mimicking them in artificial  mimics \cite{klein, Koehl2008, Sharon2}. 
The results  lie at the interface of biology,  physics and engineering, but they also have a deeply geometric character. 
Indeed, the basic question may be characterized in terms of a variation on a classical theme in differential geometry - 
that of embedding a shape with a given metric  in a space of possibly different dimension \cite{Nash1, Nash2}. However, 
the goal now is not only to state the conditions when it might be done (or not), but also to constructively determine 
the resulting shapes in terms of an appropriate mechanical theory. 

While these issues arise in three-dimensional tissues, the combination of the separation of scales that arises naturally 
in slender structures and the constraints associated with the prescription of growth  laws that are functions of space 
(and time) leads to the expectation that the resulting  theories ought to be variants of classical elastic plate and 
shell theories such as the F\"oppl-von K\'arm\'an or the Donnell-Mushtari-Vlasov theories \cite{Calladine}. That this is the case, 
has been shown for bodies that are initially flat and thin i.e. elastic plates with no initial curvature, using analogies 
to thermoelasticity \cite{Mansfield,Maha}, perturbation  analysis \cite{BenAmar, Sharon2}, and rigorous asymptotic analysis 
\cite{lemapa} that follows a program similar to the derivation of the equations for the nonlinear elasticity of thin plates 
and shells \cite{FJMgeo, FJMM, lemopa7, lemopa_convex, lepa} and a
linearized theory \cite{lepa_noneuclid} for residually strained   
Kirchhoff plates \cite{kirchhoff}. However, most laminae are naturally curved in their strain-free configurations, 
as a consequence of slow relaxation, perhaps following a previous growth history. Since even infinitesimal deformations 
of a curved shell will potentially violate isometry relative to its rest state, one expects that differential growth of 
such an object will likely lead to a variety of possible low dimensional theories depending 
on the relative size of the metric changes imposed on the system. This potential multiplicity of asymptotic theories is 
of course presaged by a similar state of affairs for the derivation of a nonlinear theory of elastic shells \cite{FJMhier, lepa}.  
 
\medskip

Here, we build on the discussion in \cite{Maha, Maha2, lemapa} and present a rigorous derivation of a set of 
asymptotic theories for the shape of {\em residually strained} thin lamina with nontrivial curvatures, i.e. growing elastic 
shells. As our starting point for a similar theory for growing curved shells, we use the observation that it is possible to 
change the shape of a lamina such as a blooming lily petal by driving it via excess growth of the margins relative to the 
interior, rather than via midrib deformations \cite{doorn}.  Previously, a thermoelastic analogy 
\cite{Mansfield} suggested a natural generalization of the Donnell-Mushtari-Vlasov shell theory \cite{Calladine} to 
growing shells \cite{Maha2}, proposed as a mathematical model for
blooming, activated from the initial (transverse) out-of-plane  
displacement $v_0$ of a petal's mid-surface.  When $v_0=0$ the equations  
(\ref{new_Karman}) reduce to the prestrained von K\'arm\'an equations 
(\ref{old_Karman}) proposed in \cite{Maha}. These were rigorously derived in \cite{lemapa} from {\em non-Euclidean elasticity}, 
where the imposed 3d prestrain is given via a Riemannian metric, whose components display the appropriate linear target 
stretching tensor $\epsilon_g$ (of order 2 in shell's thickness $h$),
and the bending tensor $\kappa_g$ (of order 1 in $h$, see
(\ref{ah})).  This leads us to focus on a particular regime of scaling for the prestrain tensor \eqref{qh-gamma} which corresponds, a
posteriori, in all different regimes of shallowness studied here,  to von-K\`arm\'an type theories.

\medskip

It is pertinent to start with a few comments regarding this particular 
choice of the scaling regime. From a mathematical point of view, 
the von-K\`arm\`an regime, where the nonlinear elastic energy per unit thickness 
scales like  $h^4$, usually corresponds to sub-linear theories, i.e. 
the first nonlinear theories which arise  when the
magnitude of forces or of prestrain allows the elastic lamina to cross the threshold of linear behavior and to manifest 
phenomena such as buckling. Since theses sublinear theories are also the least complicated
among the nonlinear theories of plates and shells arising in the  
literature, and are relevant for many applications, they are popular with engineers, physicists and
applied mathematicians. Therefore, in the analysis of nonlinear shallow shell models with growth, it is reasonable to start with 
the von K\'arm\'an regime as here. In contrast, there are a number of technical challenges  that
must be addressed when deriving lower order nonlinear theories using 
$\Gamma$-convergence. Our current study is potentially thus the first of a series that considers the various possible shell theories that result for
various limiting cases of the growth strain, the boundary loading etc.  Indeed, in a forthcoming paper \cite{LMP-new}, we address a 
shallow shell model that arises in a  forcing regime equivalent to the
energy scaling $h^\beta$ for $\beta<4$, where,  analogous to \cite{FJMhier}, 
technical obstacles regarding properties of the Sobolev solutions to
the Monge-Amp\`ere equations must be addressed, 
before establishing the corresponding $\Gamma$-limit result.

\medskip 

 {In Section 2, we  formulate our main results, in terms of a scaling analysis  that leads to the
hierarchy of limiting models as a function of the various prestrain and shallowness regimes.
In Section 3 we argue that for non-flat mid-surface $S$ (of arbitrary
curvature or for the referential out-of-plane displacement $v_0\neq 0$), 
the variationally  correct 2d theory coincides with  the extension of the classical {\em von K\'arm\'an energy} 
to shells, derived in  \cite{lemopa7}.  In the special case  $v_0=0$, this energy still reduces to the functional whose
Euler-Lagrange equations are those derived growing elastic plates in
\cite{Maha}.   In Section 4, we discuss a new shallow shell model,
valid when the radius of curvature of 
the mid-surface is relatively large compared to the thickness. 
This limit leads to a prestrained plate model which inherits the geometric structure of the shallow shell. 
In Section 5, we consider the case where the radius of curvature and
the thickness are comparable in magnitude, and 
appropriately compatible with the order 
of the prestrain tensor. We show that equations 
for a growing elastic shell can be formally derived by pulling back the in-plane and out-of-plane growth tensors $\epsilon_g$ and $\kappa_g$, 
from {\em shallow shells} $(S_h)^h$ with reference mid-surface $S_h$
given by the scaled out-of-plane displacement 
$hv_0$, onto a flat reference configuration. Furthermore, we argue
that this theory for growing elastic shells is also the Euler-Lagrange
equation of the variational limit for 3d nonlinear elastic energies on
$(S_h)^h$.  In Section 6 we discuss the model where the effects of shallowness are dominated by the growth-induced prestrain.  
In this case the limiting energy is impervious to the influence of the
shell geometry, but the effects of growth may not be neglected. This leads to the the generalized von K\'arm\'an equations for a growing flat plate. 
In Section 7, we justify that under our prestrain or growth scaling assumptions, the derived models are the relevant ones when the boundaries are free and no external forces are present.   
Finally, in Section 8, we conclude with a discussion of the present results and prospects for the future.   Since the proofs of the theorems consist of tedious yet minor 
(though necessary) modifications of the arguments detailed in 
\cite{lemopa7, lemapa, lemopa_convex}, we refer the interested reader
to the Appendix, where they are given for completeness.

\section{Preliminaries and scaling limits} 

Let  $ v_0\in \mathcal{C}^{1,1}(\bar\Omega)$  be an out-of-plate displacement on an open, bounded subset 
$\Omega\subset\mathbb{R}^2$, associated with a
family of surfaces, parametrized by $\gamma\in  [0,1]$:
\begin{equation}\label{shallow-alpha-gamma}  
S_\gamma=\phi_\gamma(\Omega), \mbox{ where } \phi_\gamma(x) = \big(x, \gamma v_0(x)\big)
\qquad \forall x=(x_1, x_2)\in\Omega,
\end{equation} 
The unit normal vector to $S_\gamma$ at $\phi_\gamma(x)$ is given by:
$$\vec n^\gamma(x) = 
\frac{\partial_1\phi_\gamma(x) \times \partial_2\phi_\gamma(x)}
{|\partial_1\phi_\gamma(x) \times \partial_2\phi_\gamma(x)|}
= \frac{1}{\sqrt{1+\gamma^2|\nabla v_0|^2}} \big(-\gamma\partial_1v_0(x),
-\gamma \partial_2v_0(x), 1\big) \qquad\forall x\in\Omega. $$
For small $h>0$, we now consider thin plates $\Omega^h = \Omega\times (-h/2, h/2)$
and 3d shells $(S_\gamma)^h$:
\begin{equation}\label{shallow-gamma} 
(S_\gamma)^h = \big\{\tilde \phi_{\gamma}(x, x_3); ~ x\in\Omega, ~ x_3\in(-h/2,
h/2)\big\},
\end{equation}
where the extension $\tilde \phi_\gamma:\Omega^h\rightarrow \mathbb{R}^3$ 
of $\phi_\gamma$ on $\Omega^h$ in (\ref{shallow-alpha-gamma}) is given by the formula:
\begin{equation}\label{kl-gamma}
\tilde\phi_{\gamma}(x, x_3) = \phi_\gamma(x) + x_3\vec n^\gamma(x)
\qquad\forall (x, x_3)\in\Omega^h.
\end{equation}  

For an elastic body with the reference configuration $(S_\gamma)^h$ we assume that its elastic energy density 
$W:\mathbb{R}^{3\times 3}\longrightarrow \mathbb{R}_{+}$ is $\mathcal{C}^2$ regular 
in a neighborhood of $SO(3)$. Moreover, we assume that $W$ satisfies the normalization, 
frame indifference and nondegeneracy conditions:
\begin{equation}\label{en_as}
\begin{split}
\exists c>0\quad \forall F\in \mathbb{R}^{3\times 3} \quad
\forall R\in SO(3) \quad
&W(R) = 0, \quad W(RF) = W(F),\\
&W(F)\geq c~ \mathrm{dist}^2(F, SO(3)).
\end{split}
\end{equation}
where $F=\nabla u$ is the deformation gradient  relative to the reference configuration $(S_\gamma)^h$. 
For prestrained  structures characterized by the  Riemannian metric:
$$p^h = (q^h)^Tq^h \quad \mbox{ on } ~~(S_\gamma)^h,$$  the tensor $F=\nabla u$ is replaced by $F =\nabla u (q^h)^{-1}$, 
so that the thickness averaged elastic energy is given by:
\begin{equation}\label{IhW-gamma}
I^{\gamma,h}(u)  = \frac{1}{h}\int_{(S_\gamma)^h} W(F) ~\mbox{d}z 
= \frac{1}{h}\int_{(S_\gamma)^h} W(\nabla u (q^h)^{-1}) ~\mbox{d}z,
\qquad \forall u \in W^{1,2}((S_\gamma)^h,\mathbb{R}^3).
\end{equation} 
Letting  $\epsilon_g,\kappa_g:\bar\Omega \rightarrow \mathbb{R}^{3\times  3}$ be two given smooth tensors, 
for each small $h$ we define the growth tensors $q^h$ on $(S_\gamma)^h$ by:
\begin{equation}\label{qh-gamma}
q^h(\phi_\gamma(x) + x_3\vec n^\gamma(x)) = \mbox{Id} + h^2\epsilon_g(x) +
hx_3\kappa_g(x) \qquad \forall (x, x_3)\in\Omega^h.
\end{equation}
The corresponding metric $p^h=(q^h)^Tq^h$ on $(S_\gamma)^h$ is then: 
$$p^h(\phi_\gamma(x) + x_3\vec n^\gamma(x)) = \mbox{Id} + 2h^2\mbox{sym }\epsilon_g(x) +
2hx_3\mbox{sym }\kappa_g(x) +\mathcal{O}(h^3).$$

An important part of our study focuses on the asymptotic behavior in the limit of vanishing thickness $h\to 0$ of the
 variational models $I^{\gamma, h}$ in (\ref{IhW-gamma}), when $\gamma= \gamma(h)= h^\alpha$ for a given 
exponent $0 \le \alpha < +\infty$. The regime $\alpha>0$ corresponds to the study of a {\it shallow} shell. 
However, we will identify {\em three distinct shallow shell limit models}, 
depending on the asymptotic behavior of the ratio $\gamma/h$, which in our setting depends only on the value of $\alpha$.  This allows us to 
rigorously derive the $\Gamma$-limits: ${\Gamma\mbox{-}}\lim_{h\to 0} \frac1{h^4} I^{h^\alpha, h}$, 
and show  that under suitable incompatibility conditions on the strain tensors $\epsilon_g$ or $\kappa_g$, the infimum of energies 
$I^{h^\alpha,h}$ scales like $h^4$ irrespective of the value of $\alpha$. 
This justifies our choice of the energy scaling and lends credibility to limiting models as physically relevant in the corresponding scaling regimes. 

\medskip 

To get a sense of our results it is useful to summarize our analysis in terms 
of the $\Gamma-$limit of $\frac1{h^4} I^{h^\alpha, h},$ which can be identified as follows:

\begin{equation}\label{energylimits}
\Gamma\mbox{-}\lim_{h\to 0} \frac1{h^4}  I^{h^\alpha, h} = \left \{ \begin{array}{ll} 
{\mathcal I}_4 & \mbox{if} \quad \alpha=0 \\ \\
{\mathcal I}^\infty_4 & \mbox{if} \quad 0<\alpha <1 \\ \\
{\mathcal I}^1_4 & \mbox{if} \quad \alpha=1 \\ \\
{\mathcal I}^0_4 & \mbox{if} \quad \alpha>1.
\end{array} \right. 
\end{equation} 
The above four theories collapse into one and the same theory when $v_0=0$. Otherwise we must 
deal with four distinct potential limits depending on the choice of parameters, in the following order: 

\medskip 

{\bf Case 1. $\alpha=0$.} This corresponds to $\gamma = 1$ where the 3d model 
is that of the prestrained non-linear elastic shell of arbitrarily large curvature (no shallowness involved).  
We will show that the $\Gamma$-limit in this case leads to a prestrained von K\'arm\'an model ${\mathcal I}_4$ 
for the 2d mid-surface $S_1$. This will be described in a more general framework in Section \ref{sec2}. 

\medskip 

{\bf Case 2. $0<\alpha<1$.} This corresponds to the flat limit $\gamma\to 0$  when the energy 
can be conceived as a limit of the von K\'arm\'an models $\mathcal I_4$ for shallow shells $S_\gamma$. 
In other words, this limiting model corresponds to the case when:
$ \displaystyle \lim_{h\to 0} \frac{\gamma(h)}{h} = \infty, $  
and it can be also  identified as:
$$ \ds {\mathcal I}^\infty_4 = \Gamma\mbox{-}\lim_{\gamma\to 0} \Big (\Gamma\mbox{-}\lim_{h\to 0} 
\frac 1{h^4} I^{\gamma,h}\Big ), $$ 
by choosing the distinguished sequence of limits, first as $h \rightarrow 0$ and then  $\gamma \rightarrow 0$.
In Section \ref{sec4} we will see that ${\mathcal I}^\infty_4$ is formulated for displacements of a plate but it inherits 
certain geometric properties of  shallow shells $S_\gamma$, such as the first-order infinitesimal isometry constraint. 

\medskip 

{\bf Case 3. $\alpha=1$.} This corresponds to the case 
$\ds \lim_{h\to 0} \gamma(h) /h = 1$. The limit model $\mathcal{I}_4^1$,  derived in Section \ref{sec3}, 
is an unconstrained energy minimization, reflecting both the effect 
of shallowness and that of the prestrain. It corresponds to a simultaneous passing to the limit $(0,0)$ 
of the pair $(\gamma, h)$ in (\ref{IhW-gamma}). The Euler-Lagrange equations (\ref{new_Karman}) of $\mathcal{I}_4^1$ 
were suggested in \cite{Maha2} for the description of the deployment of petals during the blooming of a flower. 

\medskip 

{\bf Case 4. $\alpha>1$.} Finally, the $\Gamma$-limit for all values of $\alpha>1$, 
i.e. when $\ds \lim_{h\to 0} \frac {\gamma(h)}{h} =0,$ coincides with the zero thickness limit of the degenerate case 
$\gamma=0$, which is  the prestrained plate von K\'arm\'an model, discussed in \cite{lemapa}. 
This limiting energy can be obtained by taking the consecutive limits: 
$$ \ds {\mathcal I}^0_4=  \Gamma\mbox{-}\lim_{h\to 0} \Big (\Gamma\mbox{-}\lim_{\gamma\to 0} 
\frac 1{h^4} I^{\gamma,h}\Big ).$$ 

\section{The prestrained von K\'arm\'an energy for shells of arbitrary  curvature: $\alpha =0$} \label{sec2}

When the parameter $\alpha=0$, the 3d variational problem associated with (\ref{IhW-gamma}) is  reduced 
to the 3d nonlinear elastic energy on the thin shell $S^h_1$, where $S_1$ is the graph of $v_0$.
 It is useful to discuss this model in a more general framework.  Let $S$ be an arbitrary  2d surface
embedded in $\mathbb{R}^3,$ that is compact, connected, oriented, and of class $\mathcal{C}^{1,1}$.
The boundary $\partial S$ of $S$ is assumed to be the union of finitely many (possibly none) 
Lipschitz continuous curves. We consider the family $\{S^h\}_{h>0}$ of thin shells of thickness 
$h$ around $S$: 
$$ S^h = \left\{z=x + t\vec n(x); ~ x\in S, ~ -\frac{h}{2}< t <  \frac{h}{2}\right\}, \qquad 0<h<h_0 \ll 1  $$
where we use the following notation: $\vec n(x)$ for the unit normal, $T_x S$ for the tangent space,
and $\Pi(x) = \nabla \vec n(x)$ for the shape operator on $S$, at a given $x\in S$. 
The projection onto $S$ along $\vec n$ is denoted by $\pi$, so that
$\pi(z) =x$ for all $ z=x+t\vec n(x)\in S^h,$ and we assume that $h \ll 1$ is small enough to have 
$\pi$ well defined on each $S^h$.

The instantaneous growth of $S^h$ is described by smooth tensors: 
$\epsilon_g,\kappa_g:\overline S\longrightarrow \mathbb{R}^{3\times 3}$, by:
\begin{equation}\label{ah} a^h=[a_{ij}^h] :\overline{S^h}\longrightarrow\mathbb{R}^{3\times 3}, \qquad
 a^h(x + t\vec n)=\mathrm{Id} + h^2\epsilon_g(x) + ht\kappa_g(x).
\end{equation}
The growth tensor $a^h$ is as in \cite{Maha, lemapa}, now in a general non-flat geometry setting.
Given the elastic energy density $W:\mathbb{R}^{3\times 3}\longrightarrow \mathbb{R}_{+}$ as in (\ref{en_as}), 
the thickness averaged elastic energy induced by the prestrain $a^h$ is given by:
\begin{equation}\label{IhW}
I^h(u^h)  = \frac{1}{h}\int_{S^h} W(\nabla u^h(a^h)^{-1}) ~\mbox{d}z,
\qquad \forall u^h\in W^{1,2}(S^h,\mathbb{R}^3).
\end{equation}

\medskip

Taking the asymptotic limit (the $\Gamma$-limit as $h\to 0$, see Theorem \ref{thmainuno} 
and Theorem \ref{thmaindue} below) of the energies
$I^h$ (note that $I^h=I^{1,h}$ in the notation of (\ref{IhW-gamma}))
then leads to the variationally correct model for weakly prestrained shells.
It corresponds to the following nonlinear energy functional
$\mathcal{I}_4$ acting on the admissible limiting pairs $(V, B)$: 
\begin{equation}\label{vonK}
\begin{split}
\forall V\in\mathcal{V} \quad \forall B\in\mathcal{B} \qquad
\mathcal{I}_4(V,B)= &\frac{1}{2} 
\int_S \mathcal{Q}_2\left(x,B - \frac{1}{2} (A^2)_{tan} - (\mathrm{sym}~\epsilon_g)_{tan}\right)\\
&+ \frac{1}{24} \int_S \mathcal{Q}_2\Big(x,(\nabla(A\vec n) -
  A\Pi)_{tan} - (\mathrm{sym}~\kappa_g)_{tan}\Big).
\end{split}
\end{equation}

\medskip

Here,  {\underline{the space $\mathcal{V}$ consists of {\em the first-order infinitesimal isometries} on $S$}}, defined by:
\begin{equation}\label{spaceV}
\mathcal{V} = \left\{V\in W^{2,2}(S,\mathbb{R}^3); ~~
\tau\cdot \partial_\tau V(x) = 0 \quad  \forall {\rm{a.e.}} \,\, x\in S \quad
\forall\tau\in T_x S\right\},
\end{equation}
that is those $W^{2,2}$ regular displacements $V$ for whom the change of metric on $S$ due to the deformation $\mbox{id} +
\epsilon V $ is of order $\epsilon^2$, as $\epsilon\to 0$. Furthermore, for a matrix field $A\in L^2(S, \mathbb{R}^{3\times
  3})$,  let $A_{tan}(x)$ denote the tangential minor of $A$ at $x\in S$, that is $[(A(x)\tau)\eta]_{\tau,\eta\in T_x S}$. The skew-symmetric
gradient of $V$ as in (\ref{spaceV}) then uniquely determines a $W^{1,2}$ matrix field $A:S\longrightarrow so(3)$  
so that: $\partial_\tau V(x) = A(x)\tau$ for all $\tau\in T_x S$. Hence, we equivalently write:
\begin{equation*}\label{Adef-intro} 
\begin{split}
\mathcal{V} = \big\{V\in W^{2,2}(S,\mathbb{R}^3); \quad &\exists A\in
  W^{1,2}(S,\mathbb{R}^{3\times 3}) \quad \forall {\rm{a.e.}} \,\, x\in S \quad \forall \tau\in T_x S\\
& \qquad\qquad \partial_\tau V(x) = A(x)\tau \mbox{ and }  A(x)^T= -A(x) \big\}.
\end{split}
\end{equation*}

For a plate, that is when $S\subset\mathbb{R}^2$, an equivalent analytic characterization for $V=(V^1, V^2, V^3) \in {\mathcal V}$ is given by:  
$(V^1, V^2) =  (-\omega y, \omega x) + (b_1, b_2)$, while the out-of-plane displacement $V^3\in W^{2,2}(S,\mathbb{R})$ remains unconstrained.

\medskip

{\underline{The space $\mathcal{B}$ in (\ref{vonK}) consists of {\em finite strains}}:
\begin{equation}\label{fss}
\mathcal{B} = \Big\{L^2 - \lim_{\epsilon\to 0}\mathrm{sym }\nabla w^\epsilon; 
~~ w^\epsilon\in W^{1,2}(S,\mathbb{R}^3)\Big\},
\end{equation}
which are all limits of symmetrized gradients of sequences of displacements on $S$. By $\mathrm{sym } \nabla w (x)$ we mean here a bilinear form on
$T_xS$ given by: $(\mathrm{sym }\nabla w(x)\tau)\eta = \frac{1}{2}[(\partial_\tau w(x))\eta+(\partial_\eta w(x))\tau]$ for all $\tau, \eta \in T_xS$.

It follows (via Korn's inequality) that for a flat plate $S\subset \mathbb{R}^2$, the space $\mathcal{B}$ consists precisely of
symmetrized gradients of all the in-plane displacements: 
$\mathcal{B} =
\{\mathrm{sym }\nabla w; ~~ w\in W^{1,2}(S,\mathbb{R}^2)\}$.
When $S$ is strictly convex, rotationally symmetric, or
developable without flat regions, it has been proven in \cite{lemopa7, schmidt} that
$\mathcal{B}=L^2(S,\mathbb{R}^{2\times 2}_{sym})$, i.e. it contains
all symmetric matrix fields on $S$ with square integrable entries.

\medskip

Finally, in (\ref{vonK}), {\underline{the quadratic forms}:
\begin{equation}\label{quad}
\mathcal{Q}_3(F) = D^2 W(\mbox{Id})(F,F), \quad
\mathcal{Q}_2(x, F_{tan}) = \min\{\mathcal{Q}_3(\tilde F); ~~ \tilde
F\in\mathbb{R}^{3\times 3}, ~~ (\tilde F - F)_{tan} = 0\}.
\end{equation}
where the form $\mathcal{Q}_3$ is defined for  all $F\in\mathbb{R}^{3\times 3}$, while 
$\mathcal{Q}_2(x,\cdot)$ for a given $x\in S$ is defined on tangential minors 
$F_{tan}$ of such matrices. Both forms $\mathcal{Q}_3$ and all $\mathcal{Q}_2(x,\cdot)$ 
are nonnegative definite and depend only on the  symmetric parts of their 
arguments.

\medskip

We now have the following results, stating in particular that the functional $\mathcal{I}_4$ is the 
$\Gamma$-limit \cite{dalmaso} of the scaled energies $h^{-4}I^h$:

\begin{theorem}\label{thmainuno}
Let  a sequence of deformations $u^h\in W^{1,2}(S^h,\mathbb{R}^3$) satisfy $I^h(u^h) \leq Ch^4$. 
Then there exists proper rotations $\bar R^h\in SO(3)$ and 
translations $c^h\in\mathbb{R}^3$ such that for the renormalized deformations: 
$$
y^h(x+t\vec n(x)) = (\bar R^h)^T u^h(x+t\frac{h}{h_0}\vec n) - c^h:S^{h_0}\longrightarrow\mathbb{R}^3
$$ defined on the common thin shell $S^{h_0}$, the following holds.
\begin{itemize}
\item[(i)] $y^h$ converge in $W^{1,2}(S^{h_0},\mathbb{R}^3)$ to $\pi$.
\item[(ii)] The scaled displacements:
\begin{equation}\label{Vh}
V^h(x)={h}^{-1}\fint_{-h_0/2}^{h_0/2}y^h(x+t\vec n) - x~\mathrm{d}t
\end{equation}
converge (up to a subsequence) in $W^{1,2}(S,\mathbb{R}^3)$ to some
$V\in \mathcal{V}$.
\item[(iii)]  The scaled averaged strains:
\begin{equation}\label{Bh}
B^h(x) = {h}^{-1} \mathrm{sym}\nabla V^h(x)
\end{equation}
converge (up to a subsequence) 
weakly in $L^{2}(S,\mathbb{R}^{2\times 2})$ to a limit $B\in\mathcal{B}$.
\item[(iv)] The lower bound holds:
$$\liminf_{h\to 0} {h^{-4}} I^h(u^h) \geq \mathcal{I}_4(V,B).$$
\end{itemize}
\end{theorem}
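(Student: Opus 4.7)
My plan is to adapt the compactness and lower-bound scheme for non-prestrained shells developed in \cite{FJMhier, lemopa7} to the present prestrained setting, following also \cite{lemapa}; the shell curvature and the growth tensor $a^h$ introduce only bookkeeping corrections that do not alter the overall structure. First, from the energy bound $I^h(u^h)\leq Ch^4$, the coercivity in (\ref{en_as}) and $a^h=\mathrm{Id}+O(h)$, I obtain $\int_{S^h}\mathrm{dist}^2(\nabla u^h,SO(3))\,dz\leq Ch^5$. Covering $S^h$ by patches on which it is bi-Lipschitz equivalent to a cylinder of diameter $\sim h$ with uniformly bounded constants, applying the Friesecke--James--Müller quantitative rigidity theorem on each patch, and mollifying the piecewise constant rotations, I produce a rotation field $R^h\in W^{1,2}(S,SO(3))$ satisfying
\begin{equation*}
\int_{S^h}|\nabla u^h - R^h\circ\pi|^2\,dz \leq Ch^5,\qquad \int_S|\nabla R^h|^2 \leq Ch^2.
\end{equation*}
A Poincaré inequality produces a constant $\bar R^h\in SO(3)$ with $\|R^h-\bar R^h\|_{L^2(S)}\leq Ch$, and $c^h$ is fixed by an appropriate translation so that $\dashint_{S^{h_0}}(y^h-\pi)\,dz=0$; after accounting for the normal rescaling, these estimates yield (i).

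Next, set $A^h=h^{-1}\mathrm{skew}\,((\bar R^h)^T R^h-\mathrm{Id})$. The rigidity bounds give $\|A^h\|_{W^{1,2}(S)}+\|V^h\|_{W^{2,2}(S)}\leq C$, so along a subsequence $A^h\rightharpoonup A$ weakly in $W^{1,2}(S,so(3))$ and $V^h\rightharpoonup V$ weakly in $W^{2,2}$. Expanding (\ref{Vh}) and using strong $L^2$ convergence of the thickness averages of $\nabla u^h$ yields $\nabla V^h - A^h\to 0$ in $L^2$, so $\partial_\tau V=A\tau$ for every $\tau\in T_xS$ and hence $V\in\mathcal{V}$, giving (ii). For (iii), $B^h$ is by construction the symmetric gradient on $S$ of the $W^{1,2}$ displacement $h^{-1}V^h$; a sharpening of the rigidity estimate bounds $B^h$ in $L^2$, and since by (\ref{fss}) the space $\mathcal{B}$ is the $L^2$-closure of the range of $\mathrm{sym}\,\nabla$, any weak $L^2$ limit $B$ lies in $\mathcal{B}$.

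Finally, define the scaled strain
\begin{equation*}
G^h(x,t) = h^{-2}\bigl((R^h(x))^T\nabla u^h(x+t\vec n)\,(a^h)^{-1}(x+t\vec n) - \mathrm{Id}\bigr).
\end{equation*}
A truncation of the type used in \cite[Section 4]{lemopa7} replaces $u^h$ by maps on which $G^h$ is uniformly bounded and the superquadratic remainder in $W(\mathrm{Id}+h^2G^h)=\tfrac{h^4}{2}\mathcal{Q}_3(G^h)+o(h^4)$ is negligible. Inserting $(a^h)^{-1}=\mathrm{Id}-h^2\epsilon_g-ht\kappa_g+O(h^3)$ and Taylor expanding $(R^h)^T\nabla u^h(x+t\vec n)$ in $t$ using $\nabla\vec n=\Pi$ together with the finer rigidity bound, the symmetric tangential part of $G^h$ decomposes modulo an $L^2$-vanishing remainder as $\mathcal{G}_0^h + t\,\mathcal{G}_1^h$, with
\begin{equation*}
\mathcal{G}_0^h \rightharpoonup B - \tfrac12(A^2)_{tan} - (\mathrm{sym}\,\epsilon_g)_{tan}, \qquad \mathcal{G}_1^h \rightharpoonup (\nabla(A\vec n)-A\Pi)_{tan} - (\mathrm{sym}\,\kappa_g)_{tan}
\end{equation*}
weakly in $L^2(S)$. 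The cross-term in $\dashint_{-h/2}^{h/2}\mathcal{Q}_3(\mathrm{sym}\,G^h)\,dt$ vanishes by odd parity in $t$, the identity $\dashint t^2\,dt=h^2/12$ produces the coefficient $1/24$, and the relaxation $\mathcal{Q}_3\geq\mathcal{Q}_2$ on tangential minors combined with lower semicontinuity of $\mathcal{Q}_2$ yields (iv). The hardest part is this mode decomposition on a \emph{curved} reference surface: the shape operator $\Pi$ and the non-commuting tangential and normal derivatives along $S$ must be tracked precisely to produce the term $A\Pi$ in the limit bending strain, and the truncation must remain compatible with the prestrain correction $ht\kappa_g$; these are the manipulations the authors defer to the Appendix.
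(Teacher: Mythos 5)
Your proposal is correct and follows essentially the same route the paper relies on (and spells out in its Appendix for the shallow case): the FJM rigidity/approximation scheme of \cite{lemopa7,lemapa} adapted to the prestrain $a^h$, extraction of $A$ and identification of $V\in\mathcal{V}$ and of $B\in\mathcal{B}$ as a weak limit of symmetrized gradients, and the affine-in-$t$ decomposition of the scaled strain followed by Taylor expansion of $W$, the parity/$t^2$-moment computation, relaxation $\mathcal{Q}_3\geq\mathcal{Q}_2$ and lower semicontinuity. Only minor bookkeeping slips occur---on $S^h$ one has $a^h=\mathrm{Id}+O(h^2)$ (which is what actually yields the $Ch^5$ bound on $\int_{S^h}\mathrm{dist}^2(\nabla u^h,SO(3))$), and $V^h$ is a priori only bounded in $W^{1,2}$, the $W^{2,2}$ regularity belonging to the limit $V$ through $\partial_\tau V=A\tau$ with $A\in W^{1,2}$---neither of which affects the argument.
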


\begin{theorem}\label{thmaindue}
For every couple $V\in \mathcal{V}$ and  $B\in\mathcal{B}$, there exists a sequence of deformations $u^h\in W^{1,2}(S^{h},\mathbb{R}^3)$ such that:
\begin{itemize}
\item[(i)] The rescaled sequence 
$y^h(x + t\vec n) = u^h(x +  t\frac{h}{h_0}\vec n)$ converges in $W^{1,2}(S^{h_0},\mathbb{R}^3)$ to $\pi$.
\item[(ii)] The displacements $\displaystyle V^h$ as in (\ref{Vh}) converge in $W^{1,2}(S,\mathbb{R}^3)$ to $V$.
\item[(iii)] The strains $B^h$ as in (\ref{Bh}) converge in $W^{1,2}(S,\mathbb{R}^{2\times 2})$ to $B$.
\item[(iv)] There holds:
$$\lim_{h\to 0} h^{-4} I^h(u^h) = \mathcal{I}_4(V,B).$$
\end{itemize}
\end{theorem}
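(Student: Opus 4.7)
The plan is to establish the recovery sequence by an explicit ansatz construction, followed by density and an energy expansion argument. This mirrors the construction in \cite{lemopa7} for the non-prestrained case, with the additional burden of correctly incorporating the tensors $\epsilon_g$ and $\kappa_g$ appearing in $a^h$.

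\emph{Step 1 (Density reduction).} First I would reduce to the case where $V\in\mathcal{V}\cap \mathcal{C}^\infty(\bar S,\mathbb{R}^3)$ and $B=\mathrm{sym}\nabla w$ for some $w\in \mathcal{C}^\infty(\bar S,\mathbb{R}^3)$. For the displacement $V$ this requires a density result for smooth first-order infinitesimal isometries inside $\mathcal{V}$, valid under the standing assumption $S\in\mathcal{C}^{1,1}$; for $B$ this follows directly from the definition (\ref{fss}). A standard diagonal argument then reduces the problem to constructing a recovery sequence for smooth data, from which the general case follows.

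\emph{Step 2 (Ansatz).} For smooth $V$ and $w$ I would posit:
\begin{equation*}
u^h(x+t\vec n(x)) \;=\; x + hV(x) + h^2 w(x) + (t+h^2 d_1(x)+h d_0(x,t/h))\,\vec N^h(x) + h^2 t\,\vec e(x) + o(h^2),
\end{equation*}
where $\vec N^h$ is a perturbation of $\vec n$ carrying the rotational content (so that, to leading order, the deformation gradient acts as $\mathrm{Id}+hA+\tfrac{h^2}{2}A^2$ on tangent vectors and $\vec n\mapsto\vec n+hA\vec n$), and $d_0(x,\cdot),\, d_1(x)\in\mathbb{R}^3$ are warping fields chosen pointwise to realize the minimization from $\mathcal{Q}_3$ to $\mathcal{Q}_2$ in (\ref{quad}). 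Specifically, $d_0$ is linear in $t/h$ with coefficient chosen so that the bending density reduces to $\mathcal{Q}_2(x,(\nabla(A\vec n)-A\Pi)_{tan}-(\mathrm{sym}\,\kappa_g)_{tan})$, while the constant-in-$t$ warping correction and $d_1$ are chosen so that the stretching density reduces to $\mathcal{Q}_2(x,B-\tfrac12(A^2)_{tan}-(\mathrm{sym}\,\epsilon_g)_{tan})$. Properties (i)--(iii) for this ansatz follow by direct inspection, using that $V\in\mathcal{V}$ forces $\mathrm{sym}\nabla V=0$, so that $h^{-1}\mathrm{sym}\nabla V^h$ picks up precisely the $h^2$-order correction $\tfrac12(A^2)_{tan}+\mathrm{sym}\nabla w=\tfrac12(A^2)_{tan}+B$.

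\emph{Step 3 (Energy expansion).} Using the formula (\ref{IhW}) together with the change of variables $z=x+t\vec n(x)$, I compute $\nabla u^h(a^h)^{-1}$ in a local orthonormal frame $(\tau_1,\tau_2,\vec n)$ adapted to $S$. Writing $\nabla u^h(a^h)^{-1}=R^h(\mathrm{Id}+hG^h_{(0)}+h t\, G^h_{(1)}+o(h^2))$ for a proper rotation $R^h$ close to $\mathrm{Id}$, the frame indifference and $\mathcal{C}^2$ regularity of $W$ near $SO(3)$ give the pointwise Taylor expansion
\begin{equation*}
W(\nabla u^h(a^h)^{-1}) \;=\; \tfrac{h^2}{2}\mathcal{Q}_3\bigl(G^h_{(0)}+(t/h)\,G^h_{(1)}\bigr)+o(h^4)
\end{equation*}
uniformly in $t$, where the symmetric part of $G^h_{(0)}$ contains $B-\tfrac12(A^2)_{tan}-\mathrm{sym}\,\epsilon_g$ plus the out-of-plane warping components supplied by $d_0,d_1,\vec e$, and the symmetric part of $G^h_{(1)}$ contains $\nabla(A\vec n)-A\Pi-\mathrm{sym}\,\kappa_g$ plus analogous warping. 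Integration in $t\in(-h/2,h/2)$ kills the cross term $G^h_{(0)}\otimes G^h_{(1)}$ by parity, separating the limit into a stretching and a bending contribution; the pointwise optimization over the warping variables turns each $\mathcal{Q}_3$ into the corresponding $\mathcal{Q}_2$, yielding (iv) with limit $\mathcal{I}_4(V,B)$.

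\emph{Main obstacle.} The delicate part is the algebraic bookkeeping in Step 3: one must isolate precisely the $h^2$-order symmetric strain inside $\nabla u^h(a^h)^{-1}$ after factoring out the rotation, and verify that the higher-order remainders are uniformly $o(h^2)$ so that their contribution to $h^{-4}I^h$ vanishes after thickness integration. In the present prestrained shell setting this requires tracking the coupling between the geometric quantities $\vec n,\Pi,A$ and the growth tensors $\epsilon_g,\kappa_g$; however, once the ansatz in Step 2 is correctly set up, this reduces to a minor variant of the computations in \cite{lemopa7,lemapa}, which is why the authors relegate the details to the appendix.
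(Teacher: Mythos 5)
Your overall architecture --- explicit ansatz with warping corrections, Taylor expansion of $W$ near $SO(3)$, and a parity argument splitting stretching from bending --- is the route the paper takes (compare the proof of Theorem~\ref{shallow<1-limsup} in the Appendix, which is the analogous recovery-sequence construction for $0<\alpha<1$). However, Step~1 contains a genuine gap. You reduce to $V\in\mathcal{V}\cap\mathcal{C}^\infty(\bar S,\mathbb{R}^3)$ by invoking ``a density result for smooth first-order infinitesimal isometries inside $\mathcal{V}$, valid under the standing assumption $S\in\mathcal{C}^{1,1}$.'' No such result is available or claimed here: for a $\mathcal{C}^{1,1}$ mid-surface the shape operator $\Pi$ is only $L^\infty$, and density of smooth solutions of the constraint $\tau\cdot\partial_\tau V=0$ in the $W^{2,2}$ space $\mathcal{V}$ is a delicate question even for smoother $S$ (it is tied to the matching/density properties studied in \cite{lemopa7, lemopa_convex, holepa}). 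The paper deliberately avoids this via a Lusin-type $W^{2,\infty}$ truncation (cf.\ \eqref{vhapprox}, following \cite[Proposition 2]{FJMhier}): one approximates $V$ by fields $v^h\in W^{2,\infty}$ with $\|v^h-V\|_{W^{2,2}}\to 0$, $h\|v^h\|_{W^{2,\infty}}\le\epsilon_0$ and $h^{-2}\mu\{v^h\neq V\}\to 0$. The $v^h$ are \emph{not} infinitesimal isometries; one plugs them directly into the ansatz and checks that the contribution of the small set $\{v^h\neq V\}$ to $h^{-4}I^h$ vanishes. Without this replacement your Steps~2--3 apply only to smooth data and you have no way to treat a general $V\in\mathcal{V}$.

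A secondary issue in Step~2: the warping fields $d^{0,h},d^{1,h}$ in \eqref{rec_seq} must be genuinely $\mathbb{R}^3$-valued, converging as in \eqref{warp} to the full optimal vectors $l(\cdot)$ and $c(\cdot)$ from \eqref{marta}. Confining the $t$-constant warping to the direction of a perturbed normal $\vec N^h$, as your ansatz does, cannot in general realize the minimization over $c\in\mathbb{R}^3$ needed to pass from $\mathcal{Q}_3$ to $\mathcal{Q}_2$; moreover the powers of $t$ and $h$ in your $h\,d_0(x,t/h)\vec N^h$ term do not produce a $t$-derivative of the right order. Compare with $th^2 d^{0,h}$ (constant-in-$t$ correction of order $h^2$) and $\tfrac12 t^2 h\,d^{1,h}$ (linear-in-$t$ correction of effective order $h^2$) in \eqref{rec_seq}. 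You acknowledge this as ``the delicate part,'' but the scaling must be pinned down precisely before the energy expansion in Step~3 closes.
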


The proofs follow through a combination of arguments in \cite{lemapa} and \cite{lemopa7}, which we do not repeat here but instead comment on the 
functional (\ref{vonK}) and its relation with the prestrained von K\'arm\'an equations for plates. 

\medskip

Here, in analogy with the theory for  flat plates $S\subset \mathbb{R}^2$ with incompatible strains \cite{lemapa}, in (\ref{ah}) we have  
assumed that the target metric is $2$nd order in thickness $h$ for the in-plane  stretching $(\mbox{sym } \epsilon_g)$, and  $1$st order in $h$ for
bending $(\mbox{sym } \kappa_g)$.  Due to this particular choice of scalings the limit energy $\mathcal{I}_4$ is composed of exactly two terms,  corresponding to 
stretching and bending. The argument of the integrand in the first term, namely  $B - \frac{1}{2} (A^2)_{tan} -
(\mathrm{sym}~\epsilon_g)_{tan}$, represents the difference of the  second order stretching 
induced by  the deformation $v^h=\mbox{id} + h V + h^2 w^h$ from the target stretching  $(\mbox{sym}~ \epsilon_g)$, 
with $V\in \mathcal V$ and $\mbox{sym} \nabla w^h \to B$.  The argument of the integrand in the second term
$(\nabla(A\vec n) -  A\Pi)_{tan} -(\mathrm{sym}~\kappa_g)_{tan}$, represents
the difference of the first order bending induced by $v^h$ from the target bending $(\mbox{sym}~ \kappa_g)$. 

In general, the second order displacement $w$ can be very
oscillatory. Due to the non-trivial geometry of the mid-surface $S$, the
finite strain space $\mathcal B$ is usually large and hence a bound on the $L^2$ norm of the symmetric gradients 
$\mbox{sym} \nabla w^h$ implies only a very weak bound on $w^h$. 
The limiting tensor $B$ can hence be written only as the symmetric 
gradient of a very weakly regular distribution (not a classical higher order displacement).

\begin{remark}
When the mid-surface $S$ is elliptic, then for any first order isometry
$V\in \mathcal{V}$ there exists  $B\in \mathcal{B} = 
L^2(S,\mathbb{R}_{sym}^{2\times 2})$ such that $B- \frac{1}{2}
(A^2)_{tan} - (\mbox{sym } \epsilon_g)_{tan} = 0$ (see \cite{lemopa_convex}). This implies that 
for any $V$ there exists  a higher order
modification  $w^h$ for which in the limit, the second order 
target stretching is realized.  Thus,  the energy $\mathcal{I}_4$ reduces to:
$$\mathcal{I}_4(V) =  \frac{1}{24} \int_S \mathcal{Q}_2\Big(x,(\nabla(A\vec n) -
  A\Pi)_{tan} - (\mathrm{sym}~\kappa_g)_{tan}\Big)~\mbox{d}x, $$
i.e. the bending term which is to be minimized over the space $\mathcal{V}$.
Note that this variational problem is convex (minimizing a convex integral over a
linear space $\mathcal{V}$), and hence it admits only one 
solution (up to rigid motions).   Following the analysis in
\cite{lemopa_convex}, we see that for elliptic surfaces, all limiting 
theories for $h^{-\beta}I^h$ under the energy scaling $\beta>2$,  coincide with the linear theory
$\mathcal{I}_4$ as above, while the sublinear theory, to be used in
the description of buckling, is the Kirchhoff-like (nonlinear bending)
theory corresponding to 
$\beta=2$ and derived in \cite{lepa_noneuclid}.
\end{remark}

\section{The prestrained shallow shell with a first-order isometry constraint: $0 <\alpha<1$}\label{sec4}
 
When the parameter $0<\alpha<1,$  the highest order terms (of order $h^{2\alpha}$) in the prestrain metric $p^h$ on $(S_\gamma)^h$ pulled back on the flat reference 
configuration $\Omega^h$, turn out to be \lq \lq compatible'', i.e. entirely generated by the reference 
displacement $h^\alpha v_0$.
%even though the terms of order $h^{2\alpha}$ in the pull-back metric
%are non-zero, they will not be effective  
%in creating prestrain in the material since they are 'compatible',
%coming from the original displacement $h^\alpha v_0$. 
In other words, the shallow shell will easily compensate for these terms by rigidly keeping its
structure at the $h^\alpha$ order and only will make adjustments  
at higher orders to the prestrain induced by $\epsilon_g$ and
$\kappa_g$. In the limit as $h\to 0$ we therefore expect that the effective energy functional on $\Omega$ will depend only 
on the out-of-plane and the in-plane displacements of respective orders $h$ and $h^2$. 
Yet, as we shall see below, the residual curvature of mid-surfaces will appear in a two-fold manner: 
as a linearized first-order isometry
constraint on the out-of-plate displacement  (\ref{constr2}),  and also as a defining constraint on the space 
of admissible in-plane displacements. 
The mid-plate $\Omega$ will inherit the space of first order infinitesimal isometries (\ref{spaceV}) 
and the finite strain space (\ref{fss}), in the asymptotic limit of vanishing curvature shells. 

\medskip

{\underline{The space of {\em finite strains} ${\mathcal B}_{v_0}\subset L^2(\Omega,\mathbb{R}^{2\times 2}_{sym})$}  
is defined as: 
$$ \mathcal{B}_{v_0}= \Big\{L^2 - \lim_{\epsilon\to 0}\big(\mathrm{sym }\nabla w^\epsilon 
+ \mathrm{sym}(\nabla v^\epsilon \otimes \nabla
v_0)\big);  ~~ w^\epsilon\in W^{1,2}(\Omega,\mathbb{R}^2), ~v^\epsilon \in W^{1,2}(\Omega,\mathbb{R}) \Big\}.$$
We now identify ${\mathcal B}_{v_0}$ with each of the finite strain spaces of the shallow surfaces $S_\gamma$:

\begin{lemma}\label{fssh} 
Let the surfaces $S_\gamma$ be as in (\ref{shallow-alpha-gamma}). Then for all $\gamma\neq 0$, the finite strain spaces: 
\begin{equation*}
\mathcal{B}^\gamma = \Big\{L^2 - \lim_{\epsilon\to 0}\mathrm{sym }\nabla w^\epsilon; 
~~ w^\epsilon\in W^{1,2}(S_\gamma,\mathbb{R}^3)\Big\},
\end{equation*} 
are each isomorphic to ${\mathcal B}_{v_0}$ via the linear isomorphism:
$$ {\mathcal T^\gamma}: L^2(S_\gamma, {\mathcal L}^2_{sym} (TS_\gamma,\R)) \to
L^2(\Omega, \R^{2\times 2}_{sym}).  $$ 
Here, $L^2(S_\gamma, {\mathcal L}^2_{sym} (TS_\gamma,\R))$ is the space of all $L^2$-sections of the bundle of 
symmetric bilinear forms on $S_\gamma$, and ${\mathcal T^\gamma}$ is naturally defined by: 
$$ [{\mathcal T}^\gamma(\sigma)(x)]_{ij}=  \sigma(\phi_\gamma(x)) (\partial_i \phi_\gamma(x), 
\partial_j \phi_\gamma(x)) \quad \forall \,{\mbox a.e.} 
\, x\in \Omega \quad \forall \sigma \in 
L^2(S_\gamma, {\mathcal L}^2_{sym} (TS_\gamma,\R)).  $$
\end{lemma}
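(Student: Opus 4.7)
The proof amounts to a direct computation of the pullback of a symmetrized gradient on $S_\gamma$ via $\mathcal{T}^\gamma$, followed by passing to $L^2$-limits. I would organize it in three steps.

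First, I would observe that $\mathcal{T}^\gamma$ extends to a bounded linear isomorphism (with bounded inverse) between the ambient spaces $L^2(S_\gamma,\mathcal{L}^2_{sym}(TS_\gamma,\R))$ and $L^2(\Omega,\R^{2\times 2}_{sym})$. Indeed, at every $x\in\Omega$ the pair $\partial_1\phi_\gamma(x) = (e_1,\gamma\partial_1 v_0(x))$ and $\partial_2\phi_\gamma(x) = (e_2,\gamma\partial_2 v_0(x))$ is a basis of $T_{\phi_\gamma(x)}S_\gamma$, and the coordinate representation $\sigma\mapsto [\sigma(\partial_i\phi_\gamma,\partial_j\phi_\gamma)]_{ij}$ is a pointwise linear isomorphism whose matrix has $L^\infty$-bounded entries, since $v_0\in\mathcal{C}^{1,1}$. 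It therefore suffices to show that $\mathcal{T}^\gamma(\mathcal{B}^\gamma)=\mathcal{B}_{v_0}$ as subsets of $L^2(\Omega,\R^{2\times 2}_{sym})$.

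Second, I would carry out the key identity for an arbitrary $w\in W^{1,2}(S_\gamma,\R^3)$. Setting $\tilde w := w\circ\phi_\gamma \in W^{1,2}(\Omega,\R^3)$ and splitting $\tilde w = (\tilde w',\tilde w^3)$ with $\tilde w'\in\R^2$ and $\tilde w^3\in\R$, the chain rule gives $\partial_{\partial_i\phi_\gamma} w = \partial_i\tilde w$, so that
\[
[\mathcal{T}^\gamma(\mathrm{sym}\nabla w)(x)]_{ij}
= \tfrac{1}{2}\bigl(\partial_i\tilde w\cdot \partial_j\phi_\gamma + \partial_j\tilde w\cdot\partial_i\phi_\gamma\bigr).
\]
Substituting $\partial_j\phi_\gamma = (e_j,\gamma\partial_j v_0)$ and separating the first two components from the third yields the identity
\[
\mathcal{T}^\gamma(\mathrm{sym}\nabla w)
= \mathrm{sym}\nabla \tilde w' + \mathrm{sym}\bigl(\nabla(\gamma\tilde w^3)\otimes\nabla v_0\bigr),
\]
which is precisely of the form appearing in the definition of $\mathcal{B}_{v_0}$, upon reading $w^\epsilon := \tilde w'$ and $v^\epsilon := \gamma\tilde w^3$.

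Third, I would close by a density argument. Since $\phi_\gamma$ is a bi-Lipschitz $\mathcal{C}^{1,1}$-diffeomorphism and $\gamma\neq 0$, the assignment $w\mapsto(\tilde w',\gamma\tilde w^3)$ is a bijection between $W^{1,2}(S_\gamma,\R^3)$ and $W^{1,2}(\Omega,\R^2)\times W^{1,2}(\Omega,\R)$. Thus the image under $\mathcal{T}^\gamma$ of the family $\{\mathrm{sym}\nabla w:w\in W^{1,2}(S_\gamma,\R^3)\}$ coincides exactly with the family $\{\mathrm{sym}\nabla w^\epsilon + \mathrm{sym}(\nabla v^\epsilon\otimes\nabla v_0)\}$ as $(w^\epsilon,v^\epsilon)$ ranges over all admissible pairs. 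By step one, $\mathcal{T}^\gamma$ and its inverse commute with $L^2$-closure, and so $\mathcal{T}^\gamma(\mathcal{B}^\gamma)=\mathcal{B}_{v_0}$, with the restriction of $\mathcal{T}^\gamma$ providing the desired isomorphism. There is no serious obstacle here; the only points requiring care are the $L^2$-boundedness of $\mathcal{T}^\gamma$ and $(\mathcal{T}^\gamma)^{-1}$ (so that the identity derived in step two survives the closure), and tracking the factor of $\gamma$ that absorbs $\tilde w^3$ into the auxiliary displacement $v^\epsilon$ of $\mathcal{B}_{v_0}$ — which is precisely why the hypothesis $\gamma\neq 0$ is used.
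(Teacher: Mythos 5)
Your proposal is correct and follows essentially the same route as the paper: both rest on the pullback identity $\mathcal{T}^\gamma(\mathrm{sym}\,\nabla w)=\mathrm{sym}\,\nabla(\tilde w_1,\tilde w_2)+\mathrm{sym}\big(\nabla(\gamma\tilde w_3)\otimes\nabla v_0\big)$ for $\tilde w=w\circ\phi_\gamma$, followed by passing to $L^2$-limits. You merely make explicit two points the paper leaves implicit -- the $L^2$-boundedness of $\mathcal{T}^\gamma$ and its inverse, and the bijection $w\mapsto(\tilde w_1,\tilde w_2,\gamma\tilde w_3)$ giving the reverse inclusion -- which is a welcome clarification but not a different argument.
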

\begin{proof}
Let $w\in W^{1,2}(S_\gamma, \mathbb{R}^3)$ and write $\tilde w =(\tilde
w_1, \tilde w_2, \tilde w_3) = w\circ\phi_\gamma\in
W^{1,2}(\Omega,\mathbb{R}^3)$. Then, for $i,j=1,2$ we have:
$$(\mbox{sym}\nabla w)(\partial_i \phi_\gamma, \partial_j\phi_\gamma) =
\frac{1}{2}\left(\partial_i\tilde w\cdot \partial_j\phi_\gamma
  + \partial_j\tilde w\cdot \partial_i\phi_\gamma\right)
= \Big[\mbox{sym}\nabla (\tilde w_1, \tilde w_2) +
\gamma~\mbox{sym}(\nabla \tilde w_3\otimes \nabla v_0)\Big]_{ij}.$$
Take now a sequence $w^\epsilon\in W^{1,2}(S_\gamma, \mathbb{R}^3)$ such
that $\lim_{\epsilon\to 0}\mbox{sym}\nabla w^\epsilon = B_\gamma\in
\mathcal{B}^\gamma$. Then:
$$\mathcal{T}^\gamma(B_\gamma) = \lim_{\epsilon\to  0}\mathcal{T}^\gamma(\mbox{sym}\nabla w^\epsilon) = 
\lim_{\epsilon\to 0}\Big(\mbox{sym}\nabla (\tilde w_1^\epsilon, \tilde
w_2^\epsilon) + \mbox{sym}(\nabla(\gamma\tilde w_3^\epsilon)\otimes
\nabla v_0)\Big) \in\mathcal{B}_{v_0},$$
which proves the claim.
\end{proof}

The following is a consequence of Lemma \ref{fssh}, \cite[Lemma 5.6]{lemopa7} and \cite[Lemma 3.3]{schmidt}: 

\begin{corollary}\label{cor-ellip}
Assume that:
\begin{itemize}
\item[{(i)}] either: $v_0\in {\mathcal C}^{2,1}(\Omega)\cap
  \mathcal{C}^{1,1}(\bar\Omega)$ and $\det \nabla^2 v_0 \geq c>0$ in $\Omega$,  
\item[{(ii)}] or: $v_0\in\mathcal C^{2}(\bar \Omega)$ with
  $\det\nabla^2 v_0=0$ in $\Omega$, and $\nabla^2v_0$ does not vanish
  identically on any open region in $\Omega$.
\end{itemize}
Then: 
\begin{equation}\label{B=all}
{\mathcal B}_{v_0} = L^2(\Omega, \R^{2\times 2}_{sym}).
\end{equation} 
\end{corollary}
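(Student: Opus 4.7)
The plan is to reduce the claim directly to the two cited density results via the isomorphism constructed in Lemma \ref{fssh}. Fix any $\gamma\neq 0$, say $\gamma=1$. By Lemma \ref{fssh}, the linear map $\mathcal{T}^\gamma$ carries $\mathcal{B}^\gamma$ bijectively onto $\mathcal{B}_{v_0}$; moreover, the same formula for $\mathcal{T}^\gamma$ manifestly gives a bijection between the ambient spaces $L^2(S_\gamma,\mathcal{L}^2_{sym}(TS_\gamma,\R))$ and $L^2(\Omega,\R^{2\times 2}_{sym})$, since $(\partial_1\phi_\gamma,\partial_2\phi_\gamma)$ is a smooth frame of $TS_\gamma$. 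Therefore it suffices to prove that $\mathcal{B}^\gamma$ coincides with the full section space $L^2(S_\gamma,\mathcal{L}^2_{sym}(TS_\gamma,\R))$, and the identity \eqref{B=all} will follow by applying $\mathcal{T}^\gamma$ to both sides.

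In case (i), the Gaussian curvature of the graph $S_\gamma=\phi_\gamma(\Omega)$ is, up to a positive factor, equal to $\gamma^2\det\nabla^2 v_0\geq c\gamma^2>0$, so $S_\gamma$ is a strictly elliptic surface of class $\mathcal{C}^{2,1}$ in the interior and $\mathcal{C}^{1,1}$ up to the boundary. In this setting, \cite[Lemma 5.6]{lemopa7} gives exactly the conclusion $\mathcal{B}^\gamma=L^2(S_\gamma,\mathcal{L}^2_{sym}(TS_\gamma,\R))$, by exhibiting, for any prescribed symmetric bilinear form $\sigma\in L^2$, a sequence of displacements $w^\epsilon$ on $S_\gamma$ whose symmetrized gradients converge to $\sigma$; the construction there relies on the solvability of a suitable elliptic system on the convex surface.

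In case (ii), the same Gaussian curvature vanishes identically, so $S_\gamma$ is developable, and the condition that $\nabla^2 v_0$ is not identically zero on any open subset of $\Omega$ translates into $S_\gamma$ having no open flat region. This is precisely the hypothesis of \cite[Lemma 3.3]{schmidt}, which again yields $\mathcal{B}^\gamma=L^2(S_\gamma,\mathcal{L}^2_{sym}(TS_\gamma,\R))$. Pulling back through $\mathcal{T}^\gamma$ in both cases completes the proof.

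The main conceptual point is thus bookkeeping: recognizing that Lemma \ref{fssh} is not only an identification of $\mathcal{B}^\gamma$ with $\mathcal{B}_{v_0}$ but actually an isomorphism of the ambient $L^2$-spaces, so that surjectivity on one side transfers to the other. The only non-routine verification is that the regularity assumed in (i)--(ii) matches the hypotheses of \cite[Lemma 5.6]{lemopa7} and \cite[Lemma 3.3]{schmidt}, respectively; this is a short check based on the formulas for the first and second fundamental forms of the graph $\phi_\gamma$.
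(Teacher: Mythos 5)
Your proposal is correct and follows essentially the same route as the paper: transfer the question from $\mathcal{B}_{v_0}$ to $\mathcal{B}^\gamma$ via the isomorphism of Lemma \ref{fssh}, observe that under (i) the graph $S_\gamma$ is strictly elliptic while under (ii) it is developable with no flat open subregion, and then invoke \cite[Lemma 5.6]{lemopa7} and \cite[Lemma 3.3]{schmidt}, respectively. The paper supplements this with a second, self-contained argument (solving $\mathrm{cof}\,\nabla^2 v_0 : \nabla^2 v = -\mathrm{curl}^T\mathrm{curl}\,B$ and reconstructing the in-plane field), which you do not need since the reduction via $\mathcal{T}^\gamma$ already closes the proof.
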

Indeed, in \cite{lemopa_convex} we proved that for any strictly
elliptic surface $S$, its finite strain space $\mathcal{B}$ equals 
$L^2(\Omega, \R^{2\times 2}_{sym})$. Since every $S_\gamma$ is strictly
elliptic under the assumption (i), the result follows by the
equivalence of spaces $\mathcal{B}^\gamma$ and $\mathcal{B}_{v_0}$ in Lemma
\ref{fssh}. The same observation can be derived directly, as follows. Given
$B:\Omega\rightarrow \mathbb{R}_{sym}^{2\times 2}$ smooth enough, we
first solve for $v$ in:
\begin{equation}\label{equation-mystery} 
\left\{\begin{array}{ll}
\mbox{cof}\,\, \nabla^2 v_0 : \nabla^2 v= -\textrm{curl}^T
\textrm{curl} \,B  & \mbox{ in } \Omega,\\
v=0 & \mbox{ on } \partial\Omega.
\end{array} \right.
\end{equation} 
Then we have:  
$$\textrm{curl}^T \textrm{curl}\, B =  -\mbox{cof}\nabla^2 v :
\nabla^2v_0 = 
\mbox{curl}^T\mbox{curl} (\nabla v\otimes\nabla v_0) = 
\textrm{curl}^T \textrm{curl} \Big ( \mbox{sym} (\nabla v \otimes
\nabla v_0)\Big )$$ 
(see also Remark \ref{rem4.7}), and therefore:
$$ B =  \mbox{sym} \nabla (v_1, v_2) + \mbox{sym} (\nabla v \otimes \nabla v_0), $$ 
for some in-plane displacement $(v_1, v_2):\Omega\rightarrow \mathbb{R}^2$.
The density of smooth fields $B$ in the space $L^2(\Omega, \R^{2\times
  2}_{sym})$ now yields the result.  

\begin{remark}
We expect that the property (\ref{B=all}) is satisfied for a generic $v_0$, whenever $\nabla^2 v_0$ 
does not vanish identically on any open region of $\Omega$. The 
argument requires studying very weak solutions of 
the mixed-type equation (\ref{equation-mystery}). When  this equation is 
degenerate ($v_0\equiv 0$), ${\mathcal B}_{v_0}$ coincides with the space of 
all matrix fields in the kernel of the operator $\mathrm{curl}^T
\mathrm{curl}$ and hence it is only a proper subset of $L^2(\Omega,
\R^{2\times 2}_{sym})$, consisting of symmetric gradients. 
\end{remark} 

We now present the main $\Gamma$-convergence result for the shallow
shell regime $0<\alpha<1$. The proofs which consist of tedious
modifications of the arguments in \cite{lemopa7, lemapa}, are outlined
in the Appendix.

\begin{theorem}\label{shallow<1-liminf}
Let $0<\alpha<1$.  
Assume $u^h\in W^{1,2}((S_{h^\alpha})^h,\mathbb{R}^3)$ satisfies
$I^{h^\alpha, h} (u^h)\leq Ch^{4}$, where $I^{\gamma,h}$ is given as in (\ref{IhW-gamma}). 
Then there exists $\bar R^h\in SO(3)$ and $c^h\in\mathbb{R}^3$ such that for the normalized
deformations:
\begin{equation*}\label{resc-gamma}
y^h(x,t) = (\bar R^h)^T (u^h\circ \tilde \phi_{h^\alpha})(x, ht) - c^h : \Omega^1\longrightarrow\mathbb{R}^3
\end{equation*} 
with $\phi_\gamma$ and $\gamma=h^\alpha$ as in (\ref{shallow-alpha-gamma}), we have:
\begin{itemize}
\item[(i)] $y^h(x,t)$ converge in $W^{1,2}(\Omega^1,\mathbb{R}^3)$ to
  $x$.
\item[(ii)] The scaled displacements $V^h(x) = h^{-1}
  \fint_{-1/2}^{1/2}y^h(x,t) - x - h^\alpha v_0 (x)e_3 ~\mathrm{d}t$ converge (up to a
  subsequence) in $W^{1,2}(\Omega,\mathbb{R}^3)$ to 
$(0,0,v)^T$ where $v\in W^{2,2}(\Omega, \mathbb{R})$ and:
\begin{equation}\label{constr2}
\mathrm{cof}\,\,\nabla^2 v_0 : \nabla^2 v=0 \quad \mbox{ in } \Omega.
\end{equation}
\item[(iii)] The scaled strains: 
$$ \displaystyle B_h= \frac 1h \Big (\mathrm{sym} \nabla (V^h_1,
V^h_2) + h^\alpha \mathrm{sym} (\nabla V^h_3 \otimes \nabla v_0) \Big ) $$ 
converge (up to a subsequence) weakly in $L^2$ to some $B \in {\mathcal B}_{v_0}$.
\item[(iv)] Moreover: $\liminf_{h\to 0} h^{-4} I^{h^\alpha, h}(u^h) \geq
  \mathcal{I}_4^\infty (v, B)$, where:
\begin{equation}\label{energy-shallow<1}
\mathcal{I}_4^\infty(v, B)= 
\int_\Omega \mathcal{Q}_2\left(B + \frac{1}{2} \nabla v \otimes \nabla
  v - (\mathrm{sym }~\epsilon_g)_{tan}\right) + \frac{1}{24} 
\int_\Omega \mathcal{Q}_2\Big(\nabla^2 v + (\mathrm{sym }~\kappa_g)_{tan} \Big),
\end{equation}
with $\mathcal{Q}_2$ defined in (\ref{quad}).
\end{itemize}
\end{theorem}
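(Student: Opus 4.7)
The plan is to follow the now-standard blueprint for $\Gamma$-convergence lower bounds in thin structure dimension reduction, adapted to the shallow-shell-on-a-plate geometry, as done in \cite{lemopa7, lemapa}. First I would pull the energy back from $(S_{h^\alpha})^h$ to the fixed cylinder $\Omega^1$ through the composition of $\tilde\phi_{h^\alpha}$ and the thickness rescaling $x_3\mapsto ht$, so that $y^h$ is defined on $\Omega^1$ and the rescaled gradient reads $\nabla_h y^h=[\nabla_x y^h\,|\, h^{-1}\partial_t y^h]$. The bound $I^{h^\alpha,h}(u^h)\leq Ch^4$ then translates into an $L^2$ distance of order $h^2$ of $\nabla_h y^h(a^h)^{-1}(\nabla_h\tilde\phi_{h^\alpha})^{-1}$ from $SO(3)$ on $\Omega^1$. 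Applying the quantitative rigidity estimate of Friesecke--James--M\"uller on suitable dyadic patches yields an approximating rotation field $R^h\in W^{1,2}(\Omega,SO(3))$ with $\|R^h-\bar R^h\|_{L^2}=O(h)$ and $\|\nabla R^h\|_{L^2}=O(h)$. Choosing $\bar R^h$ as a mean rotation and $c^h$ as a mean translation, one obtains $\bar R^h\to \mathrm{Id}$ after passing to a subsequence, which together with the geometry of $\tilde\phi_{h^\alpha}$ forces $y^h\to x$ strongly in $W^{1,2}(\Omega^1,\mathbb R^3)$, giving (i).

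For (ii) and (iii), I would integrate the defect $\nabla_h y^h-\bar R^h\nabla_h\tilde\phi_{h^\alpha}$ across the thickness and use the $W^{1,2}$ control of $R^h$ to obtain weak $W^{1,2}$ compactness of $V^h$; the choice of $\bar R^h$ and $c^h$ kills the rigid-motion ambiguity of the in-plane components, leaving $V^h\rightharpoonup (0,0,v)^T$ with $v\in W^{2,2}(\Omega,\mathbb R)$, the $W^{2,2}$ regularity following from an $L^2$ control of $\nabla^2 V^h_3$ extracted from the $t$-linear part of the strain (the "bending" term). The constraint \eqref{constr2} then appears by computing the $h^\alpha$-order contribution to the symmetrized gradient of the tangential displacement on the shallow shell: the terms of the form $\mathrm{sym}(\nabla V^h_3\otimes\nabla v_0)$ have to be compensated by a symmetric gradient on $\Omega$ in order for the stretching to stay at scale $h$, and this compensation is possible if and only if $\mathrm{cof}\,\nabla^2 v_0:\nabla^2 v=0$, which is the linearized Monge--Amp\`ere/Gauss curvature preservation equation around $v_0$. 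Weak compactness of $B^h$ in $L^2$ follows from the same energy bound, and $B\in\mathcal B_{v_0}$ by construction since $B^h$ is already of the form appearing in the definition of $\mathcal B_{v_0}$ with $w^\epsilon=(V^h_1,V^h_2)$ and $v^\epsilon=V^h_3$.

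For the lower bound (iv) I would use the $\mathcal C^2$ expansion $W(\mathrm{Id}+F)=\tfrac12\mathcal Q_3(F)+o(|F|^2)$, combining the rigidity-based replacement $\nabla u^h\approx R^h\nabla\tilde\phi_{h^\alpha}(a^h)$ with the formal Taylor expansion of $\nabla u^h(a^h)^{-1}$ to identify, up to terms vanishing in the limit, the tangential mid-plane strain of order $h$ as $B+\tfrac12\nabla v\otimes\nabla v-(\mathrm{sym}\,\epsilon_g)_{tan}$ and the $t$-linear part as $\nabla^2 v+(\mathrm{sym}\,\kappa_g)_{tan}$. Splitting the resulting expression between tangential and normal contributions and minimizing over the free normal direction converts $\mathcal Q_3$ into $\mathcal Q_2$. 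The liminf inequality then follows from the standard lower semicontinuity of convex quadratic integrals under weak $L^2$ convergence of $B^h$ and strong/weak convergence of $\nabla^2 V^h_3$.

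The principal obstacle will be the bookkeeping needed to separate the "compatible" $h^\alpha$-order contribution to the pulled-back metric — which the shallow geometry absorbs through $\phi_{h^\alpha}$ itself and contributes no energy at the $h^4$ scale — from the genuine $h^2$-order prestrain $\epsilon_g$ and the $h$-order bending $\kappa_g$ that drive the limit. The derivation of \eqref{constr2} is the subtle step: one must show that the $h^\alpha$-order cross-term $\mathrm{sym}(\nabla V^h_3\otimes\nabla v_0)$ in the shallow-shell finite strain cannot be killed by any sequence of in-plane correctors in $W^{1,2}$ unless $v$ satisfies the linearized Monge--Amp\`ere equation, which is the source of the isometry constraint inherited in the flat-plate limit.
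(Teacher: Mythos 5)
Your proposal follows essentially the same blueprint as the paper's appendix: FJM-type quantitative rigidity to extract an approximating $SO(3)$-valued field $R^h$ (the paper works on $S_\gamma$ directly, you pull back to $\Omega^1$ first, which is equivalent), weak $W^{1,2}$ compactness of the averaged displacements, identification of the limit strain through the first- and $t$-linear components, and a Taylor expansion of $W$ with a pointwise minimization over the normal direction that replaces $\mathcal Q_3$ by $\mathcal Q_2$. The derivation of the constraint (\ref{constr2}) is also the same in spirit: the $h^\alpha$-order cross-term $\mathrm{sym}(\nabla V^h_3\otimes\nabla v_0)$ must equal a symmetric gradient in the limit for the stretching energy to remain at scale $h$, and applying $\mathrm{curl}^T\mathrm{curl}$ then forces $\mathrm{cof}\,\nabla^2 v_0:\nabla^2 v=0$, exactly as in Proposition \ref{new-constraint}. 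Your identification of $B\in\mathcal B_{v_0}$ via the explicit form of $B^h$ and the weak $L^2$-closedness of $\mathcal B_{v_0}$ is correct and matches the conclusion of the paper's proof.

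One imprecision worth correcting: you assert ``$\bar R^h\to \mathrm{Id}$ after passing to a subsequence,'' which is false and cannot be true in general, since replacing $u^h$ by $Ru^h$ for a fixed $R\in SO(3)$ would force $\bar R^h\to R$ instead. The rotations $\bar R^h$ are \emph{chosen} (in the paper, as the mean rotation $Q^h$ of the approximating field $R^h$) precisely so that the normalized deformation $y^h=(\bar R^h)^T(u^h\circ\tilde\phi_{h^\alpha})(x,ht)-c^h$ converges to $x$; what converges to the identity is the product $(\bar R^h)^T R^h$, as in Lemma \ref{lem3.2}, not $\bar R^h$ itself. This does not derail your argument — the role of $\bar R^h$ in the definition of $y^h$ is exactly to undo the rigid rotation — but the statement as written is a logical misstep. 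Similarly, the $W^{2,2}$ regularity of $v$ is more naturally traced to the $W^{1,2}$ bound on the skew-symmetric field $A$ (with $\partial_\tau V=A\tau$) rather than directly to the $t$-linear part of the strain; the two are linked, but the paper's logical order is $A\in W^{1,2}\Rightarrow V\in W^{2,2}$, and then the $t$-linear part of $G$ is separately identified with $-\nabla^2 v$ in Lemma \ref{lem3.6}.
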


\begin{theorem}\label{shallow<1-limsup}
Let $0<\alpha<1$. For every $v\in W^{2,2}(\Omega,\mathbb{R})$ satisfying (\ref{constr2})
and every $B\in {\mathcal B}_{v_0}$, there exists a sequence of deformations $u^h\in
W^{1,2}((S_{h^\alpha})^h,\mathbb{R}^3)$ such that:
\begin{itemize}
\item[(i)] The sequence $y^h(x,t) = u^h(x+h^\alpha v_0(x)e_3 + ht\vec
  n^{\gamma}(x))$ converges in $W^{1,2}(\Omega^1)$ to $x$.
\item[(ii)] The scaled displacements $V^h$ as in (ii) Theorem \ref{shallow<1-liminf}
converge in $W^{1,2}$ to $(0,0,v)$.
\item[(iii)] The scaled strains $B^h$ as in (iii) Theorem \ref{shallow<1-liminf}
converge weakly in $L^2$ to $B$.
\item[(iv)] $\lim_{h\to 0} h^{-4}I^{h^\alpha, h}(u^h) = \mathcal{I}_4^\infty(v,B).$
\end{itemize}
\end{theorem}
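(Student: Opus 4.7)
The plan is to construct the recovery sequence via an explicit von K\'arm\'an-type ansatz, adapted to the shallow shell parametrization $\phi_{h^\alpha}$ and following the scheme of \cite{lemopa7, lemapa}. I would first reduce to smooth data: $B\in \mathcal{B}_{v_0}$ is approximated in $L^2$ by fields $B_\epsilon = \mathrm{sym}\nabla w_\epsilon + \mathrm{sym}(\nabla \tilde v_\epsilon \otimes \nabla v_0)$ with smooth $w_\epsilon, \tilde v_\epsilon$, and $v\in W^{2,2}(\Omega)$ satisfying (\ref{constr2}) is approximated in $W^{2,2}$ by smooth $v_\epsilon$ satisfying (\ref{constr2}) up to a residue absorbed into the in-plane correction via the mechanism sketched around (\ref{equation-mystery}). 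Continuity of the quadratic functional $\mathcal{I}_4^\infty$ along these approximations together with a standard diagonal argument reduces the task to smooth data.

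The key algebraic observation for the ansatz is that (\ref{constr2}) is equivalent, via the identity $\mathrm{curl}^T\mathrm{curl}(\mathrm{sym}(\nabla v \otimes \nabla v_0)) = -\mathrm{cof}\,\nabla^2 v_0 : \nabla^2 v$, to the vanishing of the compatibility obstruction for the equation $\mathrm{sym}\nabla w_\sharp = -\mathrm{sym}(\nabla v \otimes \nabla v_0)$; hence a smooth in-plane field $w_\sharp : \Omega \to \mathbb{R}^2$ solving this equation exists. In the rescaled variable $(x, t) \in \Omega^1$, I would then set
\begin{equation*}
u^h\bigl(\tilde\phi_{h^\alpha}(x, ht)\bigr) = \tilde\phi_{h^\alpha}(x, ht) + \bigl(h^2 w(x) + h^{1+\alpha} w_\sharp(x),\; h v(x) + h^{2-\alpha} \tilde v(x)\bigr)^T + h t\, d_1^h(x) + h^2 t^2\, d_2^h(x),
\end{equation*}
with appropriately scaled warping fields $d_1^h, d_2^h : \Omega \to \mathbb{R}^3$ chosen pointwise so that the non-tangential components of the computed strain minimize $\mathcal{Q}_3$, realizing the reduction to $\mathcal{Q}_2$ in (\ref{quad}). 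The $h^{1+\alpha} w_\sharp$ correction is designed so that, in the rescaled strain $B^h$, the singular $h^{\alpha-1}\mathrm{sym}(\nabla v \otimes \nabla v_0)$ contribution injected by the out-of-plane coupling $h^\alpha \mathrm{sym}(\nabla V_3^h \otimes \nabla v_0)$ is exactly cancelled, and the leading tangential strain in $(\nabla u^h)(q^h)^{-1} - \mathrm{Id}$ equals
\begin{equation*}
h^2\left(\Bigl[B + \tfrac12 \nabla v \otimes \nabla v - (\mathrm{sym}\,\epsilon_g)_{tan}\Bigr] + t\,\Bigl[\nabla^2 v + (\mathrm{sym}\,\kappa_g)_{tan}\Bigr]\right) + o(h^2).
\end{equation*}

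Claims (i)--(iii) then follow by direct expansion of the ansatz in powers of $h$. For (iv), the $\mathcal{C}^2$ regularity of $W$ near $SO(3)$ combined with frame indifference gives $W((\nabla u^h)(q^h)^{-1}) = \tfrac{1}{2}\mathcal{Q}_3(E^h) + o(|E^h|^2)$, where $E^h$ is the order-$h^2$ symmetric strain above; dominated convergence after integration in $t$, using the vanishing of odd $t$-moments and the minimality of $d_1^h, d_2^h$, recovers exactly the two $\mathcal{Q}_2$ integrals composing $\mathcal{I}_4^\infty$.

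The main obstacle is the construction of $w_\sharp$. Without the constraint (\ref{constr2}), $\mathrm{sym}(\nabla v \otimes \nabla v_0)$ is generally not a symmetric gradient, and the singular $h^{\alpha-1}$ contribution in $B^h$ could not be absorbed by any in-plane correction, which would break the $h^4$ energy scaling. The constraint (\ref{constr2}) is exactly the integrability condition $\mathrm{curl}^T\mathrm{curl}(\mathrm{sym}(\nabla v \otimes \nabla v_0)) = 0$ that restores solvability, and it simultaneously carries the geometric meaning that $v$ is a first-order infinitesimal isometry of $S_{h^\alpha}$. A secondary but nontrivial issue is the density step above: approximating $v \in W^{2,2}$ satisfying (\ref{constr2}) by smooth ones satisfying the same mixed-type linear equation is handled by Corollary \ref{cor-ellip} in the elliptic or developable regimes, and in general requires a more delicate weak-solution treatment of (\ref{equation-mystery}).
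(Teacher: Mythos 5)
Your core construction matches the paper's: use (\ref{constr2}) as the $\mathrm{curl}^T\mathrm{curl}$-integrability condition to produce an in-plane field $w_\sharp$ (the paper's $\tilde w$) with $\mathrm{sym}\nabla w_\sharp = -\mathrm{sym}(\nabla v\otimes\nabla v_0)$, then build the recovery deformation by a von K\'arm\'an ansatz whose in-plane part carries both the $h^2$-order term realizing $B$ and the $h^{1+\alpha}$-order term $w_\sharp$, with warping fields chosen to realize the $\mathcal{Q}_3\to\mathcal{Q}_2$ reduction. This is essentially the explicit formula the paper gives in the closing Remark of the appendix, and your identification of the role of (\ref{constr2}) is correct.

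However, your regularization step is where the argument would break down. You propose to approximate $v$ in $W^{2,2}$ by smooth $v_\epsilon$ ``satisfying (\ref{constr2}) up to a residue absorbed into the in-plane correction,'' but this is circular: if $v_\epsilon$ violates (\ref{constr2}) by a nonzero residue, then $\mathrm{sym}(\nabla v_\epsilon\otimes\nabla v_0)$ is \emph{not} a symmetric gradient, so there is no in-plane field to absorb anything into, and the singular $h^{\alpha-1}$ term in $B^h$ cannot be cancelled. Density of smooth solutions of the mixed-type equation (\ref{constr2}) in the $W^{2,2}$ solution set is precisely the open issue the paper deliberately sidesteps. The paper's route avoids it entirely: solve for $\tilde w$ once from the given $v\in W^{2,2}$ (Korn gives $\tilde w\in W^{2,2}$), form the vector field $V_\gamma = (\gamma\tilde w, v)\circ\phi_\gamma^{-1}$, and then approximate $V_\gamma$ by a sequence $v^h\in W^{2,\infty}$ via the Lusin-type truncation of \cite[Proposition 2]{FJMhier}, which controls $h\|v^h\|_{W^{2,\infty}}$ and shrinks the measure of $\{v^h\neq V_\gamma\}$ faster than $h^2$. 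Crucially, the truncated fields $v^h$ are \emph{not} required to be first-order isometries or to satisfy any analogue of (\ref{constr2}); the energy error coming from the failure of the constraint on the exceptional set is negligible precisely because that set is small. Replace your density step with this truncation argument and the rest of your construction goes through.
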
  
  
In the special cases of Corollary \ref{cor-ellip}, we have:
\begin{theorem}\label{shallow<1-limsup2}
Assume additionally that $v_0$ is such that (\ref{B=all}) holds.
Then, for every $v\in W^{2,2}(\Omega,\mathbb{R})$ satisfying (\ref{constr2}),
there exists a sequence $u^h\in W^{1,2}((S_{h^\alpha})^h,\mathbb{R}^3)$ such
that (i) and (ii) of Theorem \ref{shallow<1-limsup} hold, and moreover:
$$\ds \lim_{h\to 0} h^{-4}I^{h^\alpha, h}(u^h) 
= \frac{1}{24} \int_\Omega \mathcal{Q}_2\Big(\nabla^2 v + (\mathrm{sym}~\kappa_g)_{tan}\Big ).$$
\end{theorem}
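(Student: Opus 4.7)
The plan is to observe that Theorem \ref{shallow<1-limsup2} is an immediate consequence of Theorem \ref{shallow<1-limsup} once we exploit the enlarged finite strain space guaranteed by hypothesis \eqref{B=all}. Since we are free to choose the target strain $B\in\mathcal{B}_{v_0}$, the natural strategy is to pick $B$ so that the stretching contribution in \eqref{energy-shallow<1} vanishes identically, leaving only the bending term, and then invoke the recovery sequence of Theorem \ref{shallow<1-limsup} for this specific pair $(v,B)$.

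Concretely, given $v\in W^{2,2}(\Omega,\mathbb{R})$ satisfying the infinitesimal isometry constraint \eqref{constr2}, I would set
\[
B := (\mathrm{sym}\,\epsilon_g)_{tan} - \frac{1}{2}\,\nabla v\otimes\nabla v.
\]
The first term is smooth by assumption on $\epsilon_g$; for the second, $\nabla v\in W^{1,2}(\Omega,\mathbb{R}^2)$ embeds into every $L^p$ with $p<\infty$ in two dimensions, so $\nabla v\otimes\nabla v\in L^2(\Omega,\mathbb{R}^{2\times 2}_{sym})$. Hence $B\in L^2(\Omega,\mathbb{R}^{2\times 2}_{sym})$, which under the standing assumption \eqref{B=all} coincides with $\mathcal{B}_{v_0}$. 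Thus $(v,B)$ is an admissible limit pair.

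With this choice, the argument of $\mathcal{Q}_2$ in the first integral of \eqref{energy-shallow<1} is identically zero, so
\[
\mathcal{I}_4^\infty(v,B)=\frac{1}{24}\int_\Omega \mathcal{Q}_2\Big(\nabla^2 v + (\mathrm{sym}\,\kappa_g)_{tan}\Big).
\]
Applying Theorem \ref{shallow<1-limsup} to this pair $(v,B)$ produces a sequence $u^h\in W^{1,2}((S_{h^\alpha})^h,\mathbb{R}^3)$ whose rescaled deformations $y^h$ converge to $x$, whose averaged displacements $V^h$ converge to $(0,0,v)$, and whose scaled strains $B^h$ converge weakly to $B$, with energies converging to $\mathcal{I}_4^\infty(v,B)$. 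This yields conclusions (i), (ii) of Theorem \ref{shallow<1-limsup2} together with the claimed energy identity.

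There is no serious obstacle to overcome here beyond the verification that the chosen $B$ lies in $\mathcal{B}_{v_0}$, which reduces to the Sobolev embedding $W^{1,2}(\Omega)\hookrightarrow L^4(\Omega)$ and the hypothesis \eqref{B=all}. The only subtle point worth flagging is that the constructive route via \eqref{equation-mystery}, implicit in Corollary \ref{cor-ellip}, requires $B$ to possess enough regularity for the Monge--Amp\`ere-type equation to be solvable; this is circumvented here because we invoke \eqref{B=all} as a \emph{density} statement, so the recovery sequence provided by Theorem \ref{shallow<1-limsup} already incorporates the necessary approximation by smooth strains.
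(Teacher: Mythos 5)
Your proof is correct and is essentially the argument the paper implies: under hypothesis \eqref{B=all} one may freely choose $B=(\mathrm{sym}\,\epsilon_g)_{tan}-\tfrac12\nabla v\otimes\nabla v\in L^2(\Omega,\mathbb R^{2\times 2}_{sym})=\mathcal B_{v_0}$ (the Sobolev embedding $W^{1,2}\hookrightarrow L^4$ in 2D gives $\nabla v\otimes\nabla v\in L^2$), which annihilates the stretching term in \eqref{energy-shallow<1}, and then Theorem~\ref{shallow<1-limsup} applied to the pair $(v,B)$ delivers the recovery sequence. The paper states Theorem~\ref{shallow<1-limsup2} without a separate proof precisely because it is this immediate consequence of Theorem~\ref{shallow<1-limsup} together with Corollary~\ref{cor-ellip}.
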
  

\begin{remark}\label{rem4.7}
Comparing functionals (\ref{energy-shallow<1}) with (\ref{vonK}), note that the space
$\mathcal{V}(S_\gamma)$ of first-order infinitesimal isometries on $S_\gamma$
is made of displacements $V:S_\gamma\rightarrow \mathbb{R}^3$ of the form:
\begin{equation}\label{VSh}
\begin{split}
&V(\phi_\gamma(x)) = (\gamma v_1(x), h^\alpha v_2(x), v_3) \qquad \forall x\in\Omega,\\
&\mbox{such that }~ (v_1, v_2, v_3)\in W^{2,2}(\Omega,\mathbb{R}^3) 
~\mbox{ and }~ \mbox{sym}\nabla(v_1, v_2) + \mbox{sym}(\nabla v_3\otimes \nabla v_0)=0.
\end{split}
\end{equation}
Indeed, similarly as in the proof of Lemma \ref{fssh},  the condition $\mbox{sym}\nabla V=0$ on $S_\gamma$ becomes:
$$ 0 = \frac{1}{2} \big(\partial_i(V\circ \phi_\gamma) \cdot\partial_j\phi_\gamma
+ \partial_j(V\circ \phi_\gamma) \cdot\partial_i\phi_\gamma\big) =
~ \mbox{sym}[\nabla (v_1, v_2) + \nabla v_3\otimes \nabla v_0]_{ij}. $$
We also see that $v_3$ can be completed by $(v_1, v_2)$
to $V\in\mathcal{V}_1(S_h)$ as in (\ref{VSh}) only if:
\begin{equation}\label{VShv3}
\mbox{cof}\nabla^2v_0 : \nabla^2v_3 = 0,
\end{equation} the latter being also a sufficient condition when $\Omega$ is simply connected.    
This follows from:
\begin{equation*}
\begin{split}
\mbox{curl}^T&\mbox{curl}\Big(\mbox{sym}(\nabla v_3\otimes\nabla v_0)\Big)  = 
\mbox{curl}^T\mbox{curl}\Big(\nabla v_3\otimes\nabla v_0\Big) \\ & =
\partial_{22}(\partial_1v_3\cdot \partial_1v_0) 
+ \partial_{11}(\partial_2v_3\cdot\partial_2v_0)
- \partial_{12}(\partial_1v_3\cdot\partial_2v_0+\partial_2v_3\cdot\partial_1v_0)\\
& = -\big(\partial_{11}v_3\cdot\partial_{22}v_0 + \partial_{22}v_3\cdot\partial_{11}v_0 
- 2\partial_{12}v_3\cdot\partial_{12}v_0\big)
= - \mbox{cof} \nabla^2v_0:\nabla^2v_3.
\end{split}
\end{equation*} 
Hence, the admissible out-of-plane displacements $v_3$ 
relevant in (\ref{vonK}), must obey for the least the constraint (\ref{VShv3}), which 
appears in the 2-scale limiting theory 
(\ref{energy-shallow<1}) as constraint (\ref{constr2}). This is  in contrast
with the unconstrained 2-scale limiting theory (\ref{vonKnew1}) developed in the next section. 
\end{remark}

\begin{remark}
To put the last two results in another context, we draw the reader's
attention to the forthcoming paper \cite{LMP-new}, where we analyze the
$\Gamma$-limit of the shallow shell energies $\frac {1}{h^{2\alpha+2}}
I^{h^\alpha,h}$ on shells with curvature of order  $h^\alpha$. This energy
scaling is produced by forces of appropriate magnitude or by
prestrains of a different   
order than those considered in the present paper. Our main result in
\cite{LMP-new} concerns the case $\alpha<1$, where we can establish
that   in the special case $\det \nabla^2 v_0 \equiv c_0>0$, the
$\Gamma$-limit is a linearized Kirchhoff model with a Monge-Amp\`ere  
curvature constraint: 
\begin{equation}\label{constr}
\det \nabla^2 v= \det \nabla^2 v_0
\end{equation}
on the admissible out-of-plane displacements $v\in W^{2,2}(\Omega)$. The
constraint (\ref{constr2}) can be interpreted as a linearization
of  (\ref{constr}), thereby highlighting the relationship between the two models for elliptic shallow shells. 
\end{remark}

\section{The generalized Donnell-Mushtari-Vlasov model for a prestrained shallow shell: $\alpha=1$} \label{sec3}

When the parameter $\alpha =1,$  i.e. the curvature of the mid-surface co-varies with the thickness, so that $\gamma=h$. 
For small $h$,  the growth tensors on $(S_h)^h$ are then defined by (\ref{qh-gamma}) and 
the corresponding metric $p^h=(q^h)^Tq^h$ is given by:
$$p^h(\phi_h(x) + x_3\vec n^h(x)) = \mbox{Id} + 2h^2\mbox{sym }\epsilon_g(x) +
2hx_3\mbox{sym }\kappa_g(x) +\mathcal{O}(h^3).$$
% where the
% smallness of the prestrain tensor is appropriately compatible with the
% smallness of the midsurface's curvature.
Let $v^h=u^h\circ\tilde\phi_h\in W^{1,2}(\Omega^h,\mathbb{R}^3)$, 
via diffeomorphisms $\tilde\phi_h$ in (\ref{kl-gamma}).
By this simple change of variables, we see that:
\begin{equation*}
\begin{split}
I^{h,h}(u^h) & = \frac{1}{h}\int_{(S_h)^h} W(\nabla u^h(q^h)^{-1}) = 
\frac{1}{h}\int_{\Omega^h} W\Big((\nabla
v^h)(\nabla\tilde\phi_h)^{-1} (q^h\circ \tilde\phi_h)^{-1}\Big) \cdot
\det\nabla\tilde\phi_h~\mbox{d}(x, x_3) \\ & = \frac{1}{h}\int_{\Omega^h} W\Big((\nabla
v^h) (b^h)^{-1}\Big) \cdot \det\nabla\tilde\phi_h~\mbox{d}(x, x_3), 
\end{split}
\end{equation*}
where: 
$$b^h = (q^h\circ \tilde \phi_h)\nabla \tilde\phi_h.$$ 
In order to understand the structure of $b^h$ we need the following result: 

\begin{lemma}\label{lemik}
The pull-back of the metric $p^h$ through $\tilde\phi_h$ satisfies:
\begin{equation*}
\begin{split}
\forall (x, x_3)\in \Omega^h\qquad g^h(x, x_3)  & =  (\nabla\tilde\phi_h)^T (p^h\circ \tilde\phi_h) 
(\nabla \tilde\phi_h) \\ & = \mathrm{Id} +
h^2\Big(2\mathrm{sym}~\epsilon_g(x) + 
(\nabla v_0(x)\otimes\nabla v_0(x))^\ast \Big) \\ & \qquad
+ 2hx_3\Big(\mathrm{sym}~\kappa_g(x) - (\nabla^2 v_0(x))^\ast\Big) + \mathcal{O}(h^3),
\end{split}
\end{equation*}
where $F^\ast\in\mathbb{R}^{3\times 3}$ denotes the matrix whose only
  non-zero entries are in its $2\times 2$ principal minor given by $F\in\mathbb{R}^{2\times 2}$.
\end{lemma}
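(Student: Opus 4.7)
The plan is to compute $g^h$ by splitting
$$g^h = (\nabla\tilde\phi_h)^T \nabla\tilde\phi_h + (\nabla\tilde\phi_h)^T\Big((p^h\circ\tilde\phi_h) - \mathrm{Id}\Big)\nabla\tilde\phi_h,$$
treat the first (purely geometric) term as an expansion of the Euclidean pullback metric of the shallow shell $(S_h)^h$, and treat the second as an algebraic perturbation driven by the prestrain.

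First, I would expand the unit normal and its derivatives. Writing $N^h = (1 + h^2 |\nabla v_0|^2)^{-1/2} = 1 + \mathcal{O}(h^2)$, one finds
$$\vec n^h(x) = \bigl(-h\partial_1 v_0,\, -h\partial_2 v_0,\, 1\bigr) + \mathcal{O}(h^2),$$
while $\partial_i \vec n^h$ has tangential components $-h\partial_{ij} v_0 + \mathcal{O}(h^3)$ and normal component $\mathcal{O}(h^2)$. Since $\tilde\phi_h = \phi_h + x_3 \vec n^h$ and $|x_3|\le h/2$, this yields
$$\nabla\tilde\phi_h = \mathrm{Id} + \mathcal{O}(h),$$
with an explicit formula that makes the Euclidean pullback metric $E^h := (\nabla\tilde\phi_h)^T \nabla\tilde\phi_h$ computable entry-by-entry.

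Next, I would compute the tangential-tangential block of $E^h$. For $i,j\in\{1,2\}$, using $\partial_i\tilde\phi_h = (e_i, h\partial_i v_0) + x_3\,\partial_i\vec n^h$ and expanding up to order $\mathcal{O}(h^3)$, the three cross terms contribute $\delta_{ij} + h^2\partial_i v_0\,\partial_j v_0$ (from the base parts) and $-2hx_3\partial_{ij} v_0$ (from the base/normal-derivative mixed terms), while $x_3^2 |\partial_i\vec n^h||\partial_j\vec n^h| = \mathcal{O}(h^4)$ is absorbed into the error. The off-diagonal blocks $E^h_{i3}$ vanish identically by the orthogonality relations $\partial_i\phi_h\cdot\vec n^h = 0$ and $\partial_i\vec n^h \cdot \vec n^h = \tfrac{1}{2}\partial_i |\vec n^h|^2 = 0$, and $E^h_{33} = |\vec n^h|^2 = 1$. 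This gives
$$E^h = \mathrm{Id} + h^2\bigl(\nabla v_0\otimes \nabla v_0\bigr)^\ast - 2hx_3 \bigl(\nabla^2 v_0\bigr)^\ast + \mathcal{O}(h^3),$$
which is precisely the sum of the pure-geometry parts of the claimed formula.

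Finally I would handle the prestrain contribution. By the expansion of $p^h$ stated just above the lemma, $(p^h\circ\tilde\phi_h) - \mathrm{Id} = 2h^2\,\mathrm{sym}\,\epsilon_g + 2hx_3\,\mathrm{sym}\,\kappa_g + \mathcal{O}(h^3)$, a quantity of size $\mathcal{O}(h^2)$ uniformly on $\Omega^h$. Conjugating by $\nabla\tilde\phi_h = \mathrm{Id} + \mathcal{O}(h)$ produces only $\mathcal{O}(h^3)$ corrections, so the second term above equals $(p^h\circ\tilde\phi_h) - \mathrm{Id}$ plus an $\mathcal{O}(h^3)$ remainder. Adding the two contributions yields the claimed expansion of $g^h$. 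The main bookkeeping hurdle is simply making sure, in the Euclidean step, that every $x_3\cdot h$ cross term is retained at order $h^2$ while every $x_3^2\cdot h^2$ or $h^3\cdot x_3$ contribution is correctly discarded into the $\mathcal{O}(h^3)$ remainder; the orthogonality identity collapsing $E^h_{i3}$ is what makes the computation clean.
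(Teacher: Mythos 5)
Your proof is correct and follows essentially the same route as the paper's: both split $g^h$ into the Euclidean pullback $(\nabla\tilde\phi_h)^T\nabla\tilde\phi_h$ plus the prestrain perturbation, expand $\nabla\tilde\phi_h$ to order $h$, and observe that conjugating the $\mathcal{O}(h^2)$ prestrain part by $\mathrm{Id}+\mathcal{O}(h)$ contributes only $\mathcal{O}(h^3)$. The only cosmetic difference is that you invoke the exact orthogonality identities to collapse the $i3$ entries, whereas the paper reads them off directly from the explicit formulas for $\partial_i\tilde\phi_h$.
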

\begin{proof}
By a direct calculation, we obtain:
\begin{equation*}
\begin{split}
\partial_1\tilde\phi_h & = \big(1-x_3h\partial_{11}^2v_0,
-x_3h\partial_{12}^2v_0, h \partial_{1}v_0\big) + \mathcal{O}(h^3),\\
\partial_2\tilde\phi_h & = \big(-x_3h\partial_{12}^2v_0,
1-x_3h\partial_{22}^2v_0, h \partial_{2}v_0\big) + \mathcal{O}(h^3),\\
\partial_3\tilde\phi_h & = \vec n^h = \big(-h\partial_{1}v_0,
-h\partial_{2}v_0, 1-\frac{1}{2}h^2 |\nabla v_0|^2\big) + \mathcal{O}(h^3).
\end{split}
\end{equation*}
Hence:
\begin{equation*}
\begin{split}
&(\nabla\tilde\phi_h)^T (\nabla\tilde\phi_h) = \mbox{Id}_3 -2x_3h
(\nabla^2v_0)^\ast + h^2 (\nabla v_0\otimes \nabla v_0)^\ast + \mathcal{O}(h^3)\\
&(\nabla\tilde\phi_h)^T \big(2h^2\mbox{sym }\epsilon_g + 2hx_3\mbox{sym
}\kappa_g\big)(\nabla\tilde\phi_h)
= 2h^2\mbox{sym }\epsilon_g + 2hx_3\mbox{sym }\kappa_g+ \mathcal{O}(h^3),
\end{split}
\end{equation*}
in view of $\nabla\tilde\phi_h = \mbox{Id}_3 + \mathcal{O}(h)$, and the result follows.
\end{proof}

\medskip

Note that: $ (b^h)^T b^h = g^h $
and therefore by the polar decomposition of matrices:
$$ b^h = R(x,x_3) a^h \qquad \mbox{on } \Omega^h $$ 
for some $R(x,x_3)\in SO(3)$ and the symmetric growth tensor $a^h$ 
given by:
\begin{equation}\label{ahnew}
a^h = \sqrt{g^h} = \mathrm{Id} + h^2\Big(\mbox{sym }\epsilon_g + \frac{1}{2}
(\nabla v_0\otimes\nabla v_0)^\ast \Big)
+ hx_3\Big(\mbox{sym }\kappa_g - (\nabla^2 v_0)^\ast\Big) + \mathcal{O}(h^3).
\end{equation} 
For isotropic $W$ it directly follows that:
\begin{equation}\label{changev}
\begin{split}
I^{h,h}(u^h) & = \frac{1}{h}\int_{\Omega^h} W\Big((\nabla
v^h) (a^h)^{-1}R(x)^{-1}\Big) \cdot
\det\nabla\tilde\phi_h~\mbox{d}(x, x_3) \\ & 
=  \frac{1}{h}\int_{\Omega^h} W\Big((\nabla v^h) (a^h)^{-1}\Big)\cdot (1+ \mathcal{O}(h))~\mbox{d}(x, x_3).
\end{split}
\end{equation}
Heuristically, modulo the change of variable $\tilde \phi_h$ the problem reduces then 
to the study of deformations of the 
flat thin film $\Omega^h$ with the prestrain $a^h$. Indeed, by exactly the same analysis as in \cite{lemapa}
Theorems 1.2 and 1.3, we obtain in the general (not necessarily
isotropic) case, the following result:

\begin{theorem}\label{thnew}
Assume that $u^h\in W^{1,2}((S_h)^h,\mathbb{R}^3)$ satisfies
$I^{h,h}(u^h)\leq Ch^4$.
Then there exists proper rotations $\bar R^h\in SO(3)$ and
translations $c^h\in\mathbb{R}^3$ such that for the normalized
deformations:
\begin{equation*}\label{resc}
y^h(x,t) = (\bar R^h)^T (u^h\circ \tilde \phi_h)(x, ht) - c^h : \Omega^1\longrightarrow\mathbb{R}^3
\end{equation*}
defined by means of (\ref{kl-gamma}) on the common domain
$\Omega^1=\Omega\times (-1/2, 1/2)$ the following holds:
\begin{itemize}
\item[(i)] $y^h(x,t)$ converge in $W^{1,2}(\Omega^1,\mathbb{R}^3)$ to
  $x$.
\item[(ii)] The scaled displacements $V^h(x) = h^{-1}
  \fint_{-1/2}^{1/2}y^h(x,t) - x ~\mathrm{d}t$ converge (up to a
  subsequence) in $W^{1,2}(\Omega,\mathbb{R}^3)$ to the vector field
  of the form $(0,0,v)^T$ and $v\in W^{2,2}(\Omega, \mathbb{R})$.
\item[(iii)] The scaled in-plane displacements $h^{-1}V_{tan}^h$
  converge (up to a subsequence) weakly in $W^{1,2}$ to $w\in
  W^{1,2}(\Omega,\mathbb{R}^2)$.
\item[(iv)] Moreover: $\liminf_{h\to 0} h^{-4} I^{h,h}(u^h) \geq
  \mathcal{I}_4^1 (w,v)$ where:
\begin{equation}\label{vonKnew1}
\begin{split}
\mathcal{I}_4^1(w,v)= 
\frac{1}{2} 
\int_\Omega \mathcal{Q}_2&\left(\mathrm{sym }\nabla w 
+\frac{1}{2}\nabla v\otimes \nabla v - \frac{1}{2}\nabla
v_0\otimes\nabla v_0
- (\mathrm{sym}~\epsilon_g)_{tan}\right)\\
&\qquad\qquad
 + \frac{1}{24} \int_\Omega \mathcal{Q}_2\Big(\nabla^2 v - \nabla^2v_0
+ (\mathrm{sym}~\kappa_g)_{tan}\Big).
\end{split}
\end{equation}
\end{itemize}
\end{theorem}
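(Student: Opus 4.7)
The plan is to transfer the problem from the curved shell $(S_h)^h$ to the flat cylinder $\Omega^h$ using the diffeomorphism $\tilde\phi_h$ of (\ref{kl-gamma}), and then to import the prestrained plate machinery of \cite{lemapa}. Setting $v^h = u^h\circ\tilde\phi_h$, formula (\ref{changev}), Lemma \ref{lemik} and polar decomposition recast the energy as
$$I^{h,h}(u^h) = \frac{1}{h}\int_{\Omega^h} W\bigl((\nabla v^h)(a^h)^{-1}\bigr)\bigl(1+\mathcal{O}(h)\bigr)\,\mathrm{d}(x,x_3),$$
where the effective prestrain $a^h = \mathrm{Id}+\mathcal{O}(h)$ is given by (\ref{ahnew}). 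Combined with the nondegeneracy in (\ref{en_as}), the hypothesis $I^{h,h}(u^h)\leq Ch^4$ yields a uniform $L^2(\Omega^h)$ bound of order $h^{5/2}$ on $\mathrm{dist}(\nabla v^h, SO(3))$.

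Next, apply the Friesecke--James--M\"uller quantitative rigidity estimate on a fine covering of $\Omega^h$ by cylinders of in-plane size $h$ and patch the local rotations, following the procedure of \cite{FJMgeo, lemapa}, to produce a single $\bar R^h\in SO(3)$, a translation $c^h$, and an $SO(3)$-valued approximation $R^h\in W^{1,2}(\Omega)$ of $(\bar R^h)^T\nabla v^h$ with the quantitative bounds familiar from those references. After vertical rescaling $y^h(x,t)=(\bar R^h)^T v^h(x,ht)-c^h$ on the common cylinder $\Omega^1$, item (i) is immediate, and averaging in $t$ gives $V^h\to V$ in $W^{1,2}(\Omega,\mathbb{R}^3)$. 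The skew-at-leading-order structure of $(\bar R^h)^T \nabla v^h-\mathrm{Id}$ forces the tangential components of $V$ to have vanishing symmetric gradient, hence $V=(0,0,v)^T$ with $v\in W^{2,2}(\Omega,\mathbb{R})$, yielding (ii); item (iii) then follows from a second-order compactness for $h^{-1}V_{tan}^h$ obtained from the sharp $h^4$ scaling, exactly as in \cite[Thm.~1.2]{lemapa}.

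For the lower bound (iv), one expands
$$\frac{1}{h}\bigl((\bar R^h)^T\nabla_h y^h\,(a^h\circ\tilde\phi_h)^{-1}-\mathrm{Id}\bigr) \rightharpoonup G(x,t) \quad\mbox{weakly in }L^2(\Omega^1),$$
with $\nabla_h=(\nabla_{tan},h^{-1}\partial_t)$ and $G(x,t)=G_0(x)+tG_1(x)$. Reading off (\ref{ahnew}), the symmetric tangential parts of $G_0$ and $G_1$ are respectively $\mathrm{sym}\,\nabla w+\tfrac{1}{2}\nabla v\otimes\nabla v-\tfrac{1}{2}\nabla v_0\otimes\nabla v_0-(\mathrm{sym}\,\epsilon_g)_{tan}$ and $\nabla^2 v-\nabla^2 v_0+(\mathrm{sym}\,\kappa_g)_{tan}$. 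A Taylor expansion of $W$ at $\mathrm{Id}$, the relaxation $\mathcal{Q}_3\to\mathcal{Q}_2$ obtained by minimizing over the third column, the weak $L^2$ lower semicontinuity of the resulting convex integral, and integration in $t$ (whose odd/even decomposition decouples stretching from bending) then yield exactly $\mathcal{I}_4^1(w,v)$.

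The main technical obstacle is the nonlinear identification of the cross term $\tfrac{1}{2}\nabla v\otimes\nabla v$ in $G_0$: this requires upgrading the convergence of $\nabla V_3^h$ to strong $L^2$ convergence, which rests on the refined rigidity approximation $(\bar R^h)^T\nabla v^h \approx R^h(\mathrm{Id}+\mathcal{O}(h^2))$ and a two-scale argument in the spirit of \cite[Prop.~3.2]{lemopa7}, here adapted to the $h$-dependent prestrain $a^h$. Beyond this point, the argument is a routine transcription of the proofs of Theorems~1.2 and 1.3 in \cite{lemapa}, with the only new feature being the appearance of the $v_0$-dependent summands $-\tfrac{1}{2}\nabla v_0\otimes\nabla v_0$ and $-\nabla^2 v_0$ produced by the shallow-shell geometry through Lemma \ref{lemik}.
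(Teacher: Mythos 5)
Your proposal follows exactly the route that the paper takes: change variables via $\tilde\phi_h$ to transfer to the flat cylinder $\Omega^h$, invoke Lemma \ref{lemik} and polar decomposition to identify the effective prestrain $a^h$ of (\ref{ahnew}), and then import the whole prestrained-plate machinery of \cite{lemapa} (Theorems 1.2 and 1.3 there). The identification of the $v_0$-dependent summands $-\tfrac{1}{2}\nabla v_0\otimes\nabla v_0$ and $-\nabla^2 v_0$ as coming from the $h^2$ and $hx_3$ coefficients of $a^h$, and your observation that $\|a^h-\mathrm{Id}\|_{L^\infty(\Omega^h)}=\mathcal{O}(h^2)$ (because $|x_3|<h/2$) so that $\mathrm{dist}(\nabla v^h,SO(3))$ is $\mathcal{O}(h^{5/2})$ in $L^2(\Omega^h)$, are both correct and are the key points.

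Two remarks, neither of which is a genuine gap. First, the two-scale limit you write, $\frac{1}{h}\bigl((\bar R^h)^T\nabla_h y^h\,(a^h\circ\tilde\phi_h)^{-1}-\mathrm{Id}\bigr)\rightharpoonup G$, is mis-normalized: with the global rotation $\bar R^h$ and the factor $h^{-1}$, this limit is the skew-symmetric field $A$ of the first-order isometry, whose symmetric tangential minor is zero, so it cannot carry the von K\'arm\'an strains you then attribute to $\mathrm{sym}\,G_0$, $\mathrm{sym}\,G_1$. In the \cite{lemapa} machinery you are transcribing, one either uses the \emph{local} $SO(3)$-valued field $R^h(x)$ produced by geometric rigidity together with the normalization $h^{-2}$, or one tracks the $h$-scale skew part $A$ and the $h^2$-scale strain $B$ as separate objects; in the latter viewpoint the stretching term enters through $h^{-1}\mathrm{sym}\nabla V^h\rightharpoonup B$ and $B-\tfrac{1}{2}(A^2)_{tan}$ produces $\mathrm{sym}\nabla w+\tfrac{1}{2}\nabla v\otimes\nabla v$. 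Second, the theorem covers general frame-indifferent $W$, while formula (\ref{changev}) you invoke requires isotropy. For non-isotropic $W$ one should add the observation that the polar rotation $R(x,x_3)$ in $b^h=R\,a^h$ satisfies $R=\mathrm{Id}+\mathcal{O}(h)$ with skew-symmetric leading term; since $\mathcal{Q}_3$ and $\mathcal{Q}_2$ depend only on symmetric parts, this factor drops out of the Taylor expansion at order $h^4$ and the bound (iv) is unaffected. With these two small repairs, the proposal is a faithful rendering of the paper's argument.
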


In the same manner, applying the proof of Theorem 1.4 of \cite{lemapa} 
to (\ref{changev}), yields:

\begin{theorem}\label{recsec_sth}
For every $v\in W^{2,2}(\Omega,\mathbb{R})$ and $w\in
W^{1,2}(\Omega,\mathbb{R}^2)$, there exists a sequence of deformations
$u^h\in W^{1,2}((S_h)^h,\mathbb{R}^3)$ such that:
\begin{itemize}
\item[(i)] The sequence $y^h(x,t) = u^h(x+h v_0(x)e_3 + ht\vec
  n^h(x))$ converges in $W^{1,2}(\Omega^1,\mathbb{R}^3)$ to $x$.
\item[(ii)] The displacements $V^h$ as in (ii) Theorem \ref{thnew}
converge in $W^{1,2}$ to $(0,0,v)$.
\item[(iii)] The in-plane displacements $h^{-1}V^h_{tan}$ converge in
  $W^{1,2}$ to $w$.
\item[(iv)] $\lim_{h\to 0} h^{-4}I^{h,h}(u^h) = \mathcal{I}_{g, v_0}(w,v).$
\end{itemize}
\end{theorem}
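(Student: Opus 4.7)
The strategy is to reduce the shallow-shell problem, via the change of variables $\tilde\phi_h$ from (\ref{kl-gamma}), to a recovery sequence construction for a flat thin film $\Omega^h$ equipped with an effective prestrain; this latter problem is covered by Theorem~1.4 of \cite{lemapa}. Recall from Lemma~\ref{lemik} and formula (\ref{ahnew}) that the pull-back of the 3d metric on $(S_h)^h$ by $\tilde\phi_h$ admits the polar factorization $(a^h)^Ta^h = g^h$, where $a^h$ has precisely the structural form of a flat-plate growth tensor,
$$a^h = \mathrm{Id} + h^2\tilde\epsilon_g + hx_3\tilde\kappa_g + \mathcal{O}(h^3), \qquad \tilde\epsilon_g = \mathrm{sym}\,\epsilon_g + \tfrac12(\nabla v_0\otimes\nabla v_0)^\ast, \quad \tilde\kappa_g = \mathrm{sym}\,\kappa_g - (\nabla^2 v_0)^\ast.$$

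Given the data $(w, v)$ in the statement, the plan is as follows. First, apply Theorem~1.4 of \cite{lemapa} to the flat prestrained problem on $\Omega^h$ with growth tensor $a^h$ and the pair $(w, v)$, obtaining a sequence $v^h \in W^{1,2}(\Omega^h, \mathbb{R}^3)$ whose rescalings $\hat y^h(x,t) = v^h(x, ht)$ converge in $W^{1,2}(\Omega^1)$ to $x$, whose scaled averaged mid-plane displacements converge in $W^{1,2}$ to $(0,0,v)^T$, whose scaled in-plane displacements converge in $W^{1,2}$ to $w$, and for which
$$\lim_{h\to 0}\frac{1}{h^4}\int_{\Omega^h} W\bigl((\nabla v^h)(a^h)^{-1}\bigr)\,\mathrm{d}(x, x_3) = \mathcal{I}_4^1(w, v);$$
the identification of the limit integral with (\ref{vonKnew1}) is simply the flat-plate von K\'arm\'an expression with $\epsilon_g, \kappa_g$ replaced by $\tilde\epsilon_g, \tilde\kappa_g$. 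Then set $u^h = v^h \circ \tilde\phi_h^{-1}$ on $(S_h)^h$. Since $\tilde\phi_h(x, ht) = x + hv_0(x)e_3 + ht\vec n^h(x)$, the sequence $y^h$ of the statement is identically $\hat y^h$, so conclusions (i)--(iii) follow immediately from the flat-plate construction.

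The remaining task in (iv) is to match $h^{-4} I^{h,h}(u^h)$ with the flat-film quantity above. From the identity established just before Theorem \ref{thnew},
$$I^{h,h}(u^h) = \frac{1}{h}\int_{\Omega^h} W\bigl((\nabla v^h)(a^h)^{-1} R(x, x_3)^{-1}\bigr) \det\nabla\tilde\phi_h \,\mathrm{d}(x, x_3),$$
with $R(x, x_3) \in SO(3)$ coming from the polar decomposition $b^h = Ra^h$. The principal technical obstacle is handling the right-rotation $R^{-1}$, since the frame-indifference postulated in (\ref{en_as}) controls only left-rotations and cannot be invoked directly unless $W$ is isotropic. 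The way around this is quantitative: since $b^h$ and $a^h$ both differ from $\mathrm{Id}$ by $\mathcal{O}(h)$ and the polar decomposition is smooth at the identity, one has $R = \mathrm{Id} + \mathcal{O}(h)$ uniformly, while the relevant strain $(\nabla v^h)(a^h)^{-1} - \mathrm{Id}$ is of order $h$ in $L^2$ with second moments of order $h^2$; a Taylor expansion of $W$ around $SO(3)$ at $(\nabla v^h)(a^h)^{-1}R^{-1}$, identical in spirit to the bookkeeping carried out in \cite{lemapa, lemopa7}, shows that replacing $R^{-1}$ by $\mathrm{Id}$ alters the integral by $o(h^4)$. Combined with $\det\nabla\tilde\phi_h = 1 + \mathcal{O}(h^2)$, this gives (iv) with the understanding that $\mathcal{I}_{g, v_0}$ in the statement is to be read as $\mathcal{I}_4^1$ of (\ref{vonKnew1}).
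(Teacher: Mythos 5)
The proposal is correct and follows the same route as the paper: change of variables via $\tilde\phi_h$, polar decomposition $b^h = R\,a^h$ to extract the effective flat-plate prestrain $a^h$ of (\ref{ahnew}), and invocation of the recovery-sequence construction of Theorem 1.4 of \cite{lemapa}. Your explicit argument for discarding the right-rotation $R^{-1}$ in the general (non-isotropic) case does work, since the leading $\mathcal{O}(h)$ part of $R-\mathrm{Id}$ is necessarily skew and therefore contributes only $o(h^4)$ through $\mathcal{Q}_3$; this fills in what the paper leaves to the reader, and you correctly read the notational slip $\mathcal{I}_{g,v_0}$ as $\mathcal{I}_4^1$.
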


\section{The prestrained plate model and the Euler-Lagrange equations: $\alpha >1$}

When the parameter $\alpha > 1,$ we calculate the pull-back of the induced metric $p^h = (q^h)^Tq^h$, to
the flat plate $\Omega^h$, via the change of variable $\tilde \phi_\gamma$ as in (\ref{kl-gamma}).
Just as in Lemma \ref{lemik}, we obtain: 
\begin{equation}\label{metric-alpha}
\begin{split}
g^h = (\tilde \phi_{h^\alpha})^\ast p^h= \mathrm{Id}_3 & +  h^{2\alpha}
(\nabla v_0\otimes\nabla v_0)^\ast - 2h^\alpha x_3 (\nabla^2 v_0)^\ast 
\\ & +  2h^2\mbox{sym }\epsilon_g +  2hx_3 \mbox{sym }\kappa_g + \mathcal{O}(h^3).
\end{split}
\end{equation} 
It is therefore clear that the prestrain terms $(\epsilon_g, \kappa_g)$ take over the effect of shallowness  
and hence the limiting theory in the scaling regime $h^4$ is that
derived in \cite{lemapa}, coinciding with results of Theorem \ref{thnew} and Theorem \ref{shallow<1-liminf} 
for the case $v_0=0$ and with the results of Theorem \ref{thmainuno} for $S\subset \R^2$:
\begin{equation}\label{vonKnew10}
\begin{split}
\forall v\in W^{2,2}(\Omega, \mathbb{R}) \quad \forall &w\in W^{1,2}(\Omega, \mathbb{R}^2) \\
\mathcal{I}_4^0(w,v)=  \frac{1}{2} 
\int_\Omega \mathcal{Q}_2&\left(\mathrm{sym }\nabla w 
+\frac{1}{2}\nabla v\otimes \nabla v 
- (\mathrm{sym}~\epsilon_g)_{tan}\right)\\
&\qquad\qquad \qquad\qquad \qquad
 + \frac{1}{24} \int_\Omega \mathcal{Q}_2\Big(\nabla^2 v 
+ (\mathrm{sym}~\kappa_g)_{tan}\Big).
\end{split}
\end{equation}
Indeed, consider the prestrained von K\'arm\'an   shell model $\mathcal{I}_4$ discussed in Section \ref{sec2} 
for a degenerate situation $S\subset \R^2$. The term $B- \frac{1}{2} (A^2)_{tan}$ 
reduces to: $\frac{1}{2} \left(\nabla w+ (\nabla w)^T + \nabla v \otimes \nabla  v\right)$,
where $w$ and $v=V^3$ are respectively the in-plane  and the
out-of-plane displacements of $S$. The term $(\nabla(A\vec n) -  A\Pi)_{tan}
$ reduces also to: $-\nabla^2 v$. Therefore, when $S\subset \R^2$,  $\mathcal{I}_4$ coincides with the model $\mathcal I^0_4$ and with the models 
$\mathcal {I}^\infty_4$ and $\mathcal{I}^1_4$ in the degenerate case $v_0=0$. 

\begin{remark}
We point out a qualitative difference between the out-of-plane displacements $v$
in the argument of $\mathcal{I}^0_4$ and $\mathcal{I}^1_4$ and those  appearing 
as the arguments of $\mathcal{I}^\infty_4$. 
The former are the net lowest order out of plane displacements of the limit deformations which are of order $h$,
as suggested by Theorem \ref{thnew} (ii), but, according to Theorem \ref{shallow<1-liminf} (ii), when $\alpha<1$, 
the latter are the second highest order term of the expansion of the deformation after $h^\alpha v_0$. 
Hence, one should replace $v$ in 
(\ref{vonKnew1}) or (\ref{new_Karman}) through a change of variables by $v+ h^{\alpha-1} v_0$ in order to 
quantitatively compare this model with the variational model $\mathcal{I}^\infty_4$ 
in (\ref{energy-shallow<1}).
\end{remark}

\medskip

As shown in \cite{lemapa}, under the assumption of $W$
being isotropic, 
the Euler-Lagrange equations of $\mathcal{I}_4$ under this degeneracy condition 
(or equivalently the Euler-Lagrange equations of $\mathcal{I}^0_4$) 
can be then written in terms of the displacement $v$ and the Airy stress potential $\Phi$: 
\begin{equation}\label{old_Karman}
\left \{ \begin{split}
\Delta^2\Phi & = -Y(\det \nabla^2 v + \lambda_g)\\
Z\Delta^2v &= [v,\Phi] - Z\Omega_g~,
\end{split}\right.
\end{equation} 
where $Y$ is the Young modulus, $Z$  the bending stiffness,
$\nu$  the Poisson ratio (given in terms of  the Lam\'e constants
$\mu$ and $\lambda$),  and :
\begin{equation}\label{lambdag} 
\begin{split}
\lambda_g & = \mbox{curl}^T\mbox{curl }(\epsilon_g)_{2\times 2}
= \partial_{22}(\epsilon_g)_{11} +  \partial_{11}(\epsilon_g)_{22}
- \partial_{12}\Big((\epsilon_g)_{12}+ (\epsilon_g)_{21}\Big),\\
\Omega_g & = \mbox{div}^T\mbox{div }\Big((\kappa_g)_{2\times 2}
+\nu\mbox{ cof }(\kappa_g)_{2\times 2}\Big) \\
& = \partial_{11} \Big((\kappa_g)_{11}+ \nu(\kappa_g)_{22}\Big) +
\partial_{22}\Big((\kappa_g)_{22}+ \nu(\kappa_g)_{11}\Big) +
(1- \nu)\partial_{12}\Big((\kappa_g)_{12}+ (\kappa_g)_{21}\Big).
\end{split}
\end{equation}
Equations (\ref{old_Karman}) are based on a thermoelastic analogy to
growth \cite{Mansfield, Maha} and can also be derived using a formal
perturbation theory \cite{BenAmar}.  
 
\medskip

On the other hand, the following system was introduced in \cite{Maha2}, 
as a mathematical model of blooming activated by differential lateral
growth from an initial non-zero transverse displacement field $v_0$:
\begin{equation}\label{new_Karman}
\left \{ \begin{split}
\Delta^2\Phi & = -Y(\det \nabla^2 v -\det\nabla^2 v_0+ \lambda_g)\\
Z(\Delta^2v-\Delta^2 v_0) &= [v,\Phi] - Z\Omega_g~,
\end{split}\right.
\end{equation} 
A similar calculation as in \cite{lemapa} then shows that 
(\ref{new_Karman}) can be viewed as the Euler Lagrange equations corresponding to the energy functional 
$\mathcal{I}^1_4$.  We will now show that (\ref{new_Karman}) can be 
directly derived from the equations (\ref{old_Karman}).
 
\begin{proposition}\label{derive}
The system (\ref{new_Karman}) can be
derived from the equations (\ref{old_Karman}) by pulling back the
prestrain tensors $\epsilon_g$ and $\kappa_g$ from a sequence of
shallow shells $(S_h)^h$ generated by the vanishing out-of-plane displacements $hv_0$.
\end{proposition}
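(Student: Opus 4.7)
\bigskip

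\noindent\textbf{Proof proposal.} The plan is to exploit the explicit form of the pulled-back prestrain computed in Lemma \ref{lemik}, read off the two ``effective'' strain tensors induced on the flat reference $\Omega^h$, substitute them into the templates for $\lambda_g$ and $\Omega_g$ in (\ref{lambdag}), and check that the resulting right-hand sides in (\ref{old_Karman}) rearrange into exactly those in (\ref{new_Karman}).

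First, I would apply the change of variables $\tilde\phi_h$ and invoke Lemma \ref{lemik} (or equivalently formula (\ref{ahnew})) to conclude that the prestrain on $(S_h)^h$, once pulled back to $\Omega^h$, is $h^4$-equivalent to a flat-plate prestrain with in-plane component
\[
\tilde\epsilon_g \;=\; \epsilon_g \,+\, \tfrac{1}{2}\,\nabla v_0\otimes \nabla v_0
\qquad\text{and bending component}\qquad
\tilde\kappa_g \;=\; \kappa_g \,-\, \nabla^2 v_0.
\]
Since the flat prestrained von K\'arm\'an equations (\ref{old_Karman}) are written solely in terms of the scalar quantities $\lambda_g$ and $\Omega_g$ built from $\epsilon_g$ and $\kappa_g$ via (\ref{lambdag}), it suffices to compute $\tilde\lambda_g$ and $\tilde\Omega_g$ and track the additional terms they produce.

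The second step is the $\lambda_g$-computation. By linearity of $\mathrm{curl}^T\mathrm{curl}$,
\[
\tilde\lambda_g \;=\; \lambda_g \,+\, \tfrac{1}{2}\,\mathrm{curl}^T\mathrm{curl}\bigl(\nabla v_0\otimes\nabla v_0\bigr),
\]
and the identity derived in Remark \ref{rem4.7}, specialized to $v_3=v_0$, yields $\mathrm{curl}^T\mathrm{curl}(\nabla v_0\otimes\nabla v_0)=-\mathrm{cof}\,\nabla^2 v_0 : \nabla^2 v_0 = -2\det\nabla^2 v_0$, the last equality being the standard $2\times 2$ fact $\mathrm{cof}\,A:A=2\det A$. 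Hence $\tilde\lambda_g=\lambda_g-\det\nabla^2 v_0$, which is precisely the shift converting the first line of (\ref{old_Karman}) into the first line of (\ref{new_Karman}).

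The third step is the analogous $\Omega_g$-computation. Using $\mathrm{div}^T\mathrm{div}(S)=\partial_{ij}S_{ij}$ for symmetric $S$, I would check that $\mathrm{div}^T\mathrm{div}(\nabla^2 v_0)=\Delta^2 v_0$, while the cofactor term drops out by commutation of partial derivatives:
\[
\mathrm{div}^T\mathrm{div}\bigl(\mathrm{cof}\,\nabla^2 v_0\bigr)
\;=\;\partial_{11}\partial_{22}v_0-2\partial_{12}\partial_{12}v_0+\partial_{22}\partial_{11}v_0
\;=\;0.
\]
Therefore $\tilde\Omega_g=\Omega_g-\Delta^2 v_0$, and substituting $(\tilde\lambda_g,\tilde\Omega_g)$ in place of $(\lambda_g,\Omega_g)$ in (\ref{old_Karman}) recovers (\ref{new_Karman}) after moving the $Z\Delta^2 v_0$ term to the left-hand side. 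There is no real obstacle here: the argument is a bookkeeping verification, and the only point requiring mild care is the algebraic reduction in the $\Omega_g$-piece, where one must see that the Poisson-ratio term $\nu\,\mathrm{cof}\,\nabla^2 v_0$ contributes nothing; beyond that, everything follows directly from Lemma \ref{lemik} and the identities already established in Remark \ref{rem4.7}.
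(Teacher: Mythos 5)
Your proof is correct and follows essentially the same approach as the paper: pull the prestrain back to $\Omega^h$ via Lemma \ref{lemik}/(\ref{ahnew}), read off the effective tensors $\tilde\epsilon_g = \epsilon_g + \tfrac{1}{2}\nabla v_0\otimes\nabla v_0$ and $\tilde\kappa_g = \kappa_g - \nabla^2 v_0$, then substitute into (\ref{lambdag}) and simplify. You also get the sign right: the paper's displayed line $\lambda_g(v_0) = \lambda_g + \det\nabla^2 v_0$ is a typo, since by the identity in Remark \ref{rem4.7} one has $\mathrm{curl}^T\mathrm{curl}(\nabla v_0\otimes\nabla v_0) = -2\det\nabla^2 v_0$, so the modified quantity is $\lambda_g - \det\nabla^2 v_0$, which is exactly what is needed to produce the $-\det\nabla^2 v_0$ term in (\ref{new_Karman}).
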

 
\begin{proof} 
By Lemma \ref{lemik} we see that the growth tensor on
$\Omega^h$ is given by (\ref{ahnew}). Applying (\ref{lambdag}) to the modified strain and curvature in
$a^h$, to the leading order, we obtain:
\begin{equation*}
\begin{split}
\lambda_g(v_0) & = \mbox{curl}^T\mbox{curl }\Big((\mbox{sym }\epsilon_g)_{tan} +
\frac{1}{2}\nabla v_0\otimes \nabla v_0\Big) 
=  \lambda_g + \det\nabla^2v_0\\
\Omega_g(v_0) & = \mbox{div}^T\mbox{div }\Big(((\mbox{sym }\kappa_g)_{tan}-\nabla^2v_0)
+\nu\mbox{ cof }((\mbox{sym }\kappa_g)_{tan}-\nabla^2v_0)\Big) \\
& = \Omega_g -\Delta^2v_0,
\end{split}
\end{equation*}
where the last equality follows from $\mbox{div }\mbox{cof }\nabla^2v_0=0$.
Consequently, (\ref{old_Karman}) for the growth tensor 
(\ref{ahnew}) becomes exactly (\ref{new_Karman}).
\end{proof}

\section{The energy scaling}\label{sec5}

A straightforward consequence of our results is the following assertion about the scaling of the infimum  
elastic energies of the thin prestrained shallow shells in the von K\'arm\'an  regime (\ref{qh-gamma}). 
\begin{theorem} 
Let $\alpha> 0$ and let the sequence of thin  shells $(S_\gamma)^h$ be given as
in (\ref{shallow-gamma}) with the elastic energies of deformations $I^{\gamma,h}$ as in  
(\ref{IhW-gamma}). Assume that:
\begin{equation}\label{knot0}
\mathrm{curl} \, (\mathrm{sym}~\kappa_g)_{tan} \not\equiv 0
\quad \mbox { in } \Omega.
\end{equation} 
Then, there exists constants $c, C>0$ for which: 
\begin{equation}\label{teho}
\displaystyle  \forall 0<h\ll 1 \qquad 
c \le \inf_{u\in W^{1,2}((S_{h^\alpha})^h, {\mathbb R}^3)} \frac{1}{h^4} I^{h^\alpha, h}(u) \le C. 
\end{equation}
\end{theorem}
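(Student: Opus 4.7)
The plan is to establish the two inequalities in (\ref{teho}) separately. The upper bound will follow at once from the recovery sequence statements already supplied by Theorem \ref{shallow<1-limsup} (case $0<\alpha<1$), Theorem \ref{recsec_sth} (case $\alpha=1$), and their analogue for $\alpha>1$, which is the $v_0=0$ limit reduction described in Section \ref{sec5} together with the results of \cite{lemapa}. The lower bound will be proved by contradiction: if it fails, the $\Gamma$-liminf inequalities force a limiting displacement $v$ whose Hessian equals (up to sign and an additive smooth field) $-(\mathrm{sym}~\kappa_g)_{tan}$, and taking the distributional curl of this identity contradicts (\ref{knot0}).

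For the upper bound, I take the trivial admissible data: $v\equiv 0$ together with $B\equiv 0$ in the regime $0<\alpha<1$ (note that $B=0\in\mathcal{B}_{v_0}$, obtained from $w^\epsilon=v^\epsilon=0$ in the definition), or $w\equiv 0$, $v\equiv 0$ in the regimes $\alpha=1$ and $\alpha>1$. The corresponding recovery sequence $u^h$ satisfies
$$\lim_{h\to 0} \frac{1}{h^4} I^{h^\alpha,h}(u^h) = \int_\Omega \mathcal{Q}_2\bigl(-(\mathrm{sym}~\epsilon_g)_{tan}\bigr) + \frac{1}{24}\int_\Omega \mathcal{Q}_2\bigl((\mathrm{sym}~\kappa_g)_{tan}\bigr) < \infty,$$
(with the extra $-\nabla^2 v_0$ term in the bending integrand when $\alpha=1$), which bounds the infimum from above by some fixed constant $C$ uniformly in $h$.

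For the lower bound, assume toward a contradiction that there exist $h_k\to 0$ and near-minimizers $u^{h_k}\in W^{1,2}((S_{h_k^\alpha})^{h_k},\mathbb{R}^3)$ with $h_k^{-4} I^{h_k^\alpha,h_k}(u^{h_k})\to 0$. In particular $I^{h_k^\alpha,h_k}(u^{h_k})\leq C h_k^4$, so the compactness statements in Theorem \ref{shallow<1-liminf} (or Theorem \ref{thnew}, or their $\alpha>1$ analogue) apply after renormalization by suitable rigid motions. Extracting a subsequence, the scaled displacements converge to a limit $(0,0,v)^T$ with $v\in W^{2,2}(\Omega)$ (satisfying the isometry constraint (\ref{constr2}) when $0<\alpha<1$) and the scaled strains converge weakly to an admissible $B$. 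The $\Gamma$-liminf inequality then yields $\mathcal{I}_4^{\ast}(v,B)=0$, where $\ast\in\{\infty,1,0\}$ is prescribed by the value of $\alpha$.

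The nondegeneracy hypothesis in (\ref{en_as}) ensures that $\mathcal{Q}_3$, and hence $\mathcal{Q}_2$, is positive definite on symmetric $2\times 2$ matrices. Therefore the vanishing of the bending integral forces, a.e.\ in $\Omega$,
$$\nabla^2 v + (\mathrm{sym}~\kappa_g)_{tan} = 0 \quad (\alpha\neq 1), \qquad \nabla^2(v-v_0) + (\mathrm{sym}~\kappa_g)_{tan} = 0 \quad (\alpha=1).$$
Since $v\in W^{2,2}(\Omega)$ and $v_0\in\mathcal{C}^{1,1}(\bar\Omega)$, applying the distributional row-wise curl to either identity and using the classical vanishing $\mathrm{curl}(\nabla^2\psi)=0$ for any scalar $\psi$, we conclude that $\mathrm{curl}((\mathrm{sym}~\kappa_g)_{tan})\equiv 0$ in $\Omega$, in direct contradiction with (\ref{knot0}). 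The only step that is not entirely routine is the verification of the compactness and $\Gamma$-liminf bounds in the case $\alpha>1$, which do not appear as an explicit statement in the present paper; however, they follow from a direct adaptation of the arguments in \cite{lemapa}, since by (\ref{metric-alpha}) the pulled-back metric differs from its $v_0=0$ counterpart by terms of order strictly smaller than $h^2$ and thus does not affect the leading-order energy expansion.
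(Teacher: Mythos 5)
Your proposal is correct and follows essentially the same route as the paper: the upper bound is obtained via the recovery sequences of Theorems~\ref{shallow<1-limsup}, \ref{recsec_sth}, and the $\alpha>1$ reduction to \cite{lemapa}, while the lower bound hinges on the observation that the vanishing of the bending term in the $\Gamma$-liminf would force $\nabla^2 v+(\mathrm{sym}\,\kappa_g)_{tan}=0$ (or its $\alpha=1$ variant with $v-v_0$), whose curl contradicts (\ref{knot0}). Your version merely unpacks the paper's terse remark by explicitly invoking the compactness statements to produce the limit $v$ and by explicitly noting that nondegeneracy of $W$ yields positive definiteness of $\mathcal{Q}_2$ on symmetric matrices, both of which are implicitly used by the authors.
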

Indeed, the condition $\mbox{curl}(\mbox{sym }\kappa_g)_{tan}\equiv 0$
is equivalent to $(\mbox{sym }\kappa_g)_{tan} =
\nabla^2 v$, for some $v:\Omega\rightarrow \mathbb{R}$. If not 
satisfied, the bending term in (\ref{energy-shallow<1}) is always
positive, yielding the lower bound in (\ref{teho}). The existence of a recovery
sequence in Theorem \ref{shallow<1-limsup} and Theorem \ref{recsec_sth} and \cite{lemapa}
implies the upper bound.

\begin{remark}
The incompatibility condition (\ref{knot0}) can be relaxed depending
on the specific value of $\alpha$, and the assumed energy level,
see e.g. \cite{lemapa} for a more involved scaling analysis when
$\alpha>1$. Heuristically, conditions of similar type imply that the Riemann curvature tensor of the 
induced metric $p^h$ is non-zero and hence, in view of \cite[Theorem 2.2]{lepa_noneuclid},  
they guarantee the positivity of the infimum of $I^{\gamma,h}$. In a
further step we observe that, when $p^h$  is close to be  flat,
the scaling regime depends on the magnitude  of the first non-zero
term of the expansion of its curvature tensor. 
Note also that when $\alpha<1$, the first two non-zero terms after
identity in (\ref{metric-alpha}) 
have no bearing on the first non-zero terms in the expansion of the curvature.  
Analogously, the induced prestrains $\kappa_g'= \nabla^2 v_0$ and $\epsilon_g'=
\frac 12  (\nabla^2 v_0 \otimes \nabla^2 v_0)$ corresponding to the
scalings $h^\alpha$ and $h^{2\alpha}$ do not satisfy neither conditions (1.13) nor
(1.14) of \cite{lemapa}. Therefore the energy infimum must
naturally fall below $h^4$, i.e. in  the regime $h^{2\alpha+2}$.  
\end{remark} 

\section{Discussion}

Our analysis has rigorously derived a general theory of shells 
with residual strain arising from relative growth, inhomogeneous swelling, plasticity etc. 
In fact, there are many such theories; each is a consequence of the
scalings of shell's curvature relative to the magnitude of the strain incompatibility 
induced by curvature growth tensors.  Indeed, for any exponent $\alpha\geq 0$ we have considered the
following energies of deformations on weakly prestrained shallow shells:
\begin{equation*}
I^{h}(u) = \frac{1}{h}\int_{(S_{h^\alpha})^h} W((\nabla u) (q^h)^{-1}) \qquad
\forall u\in W^{1,2}((S_{h^\alpha})^h, \mathbb{R}^3),
\end{equation*}
with the growth tensor $q^h$ given by (\ref{qh-gamma}) on thin shells of
the form (\ref{shallow-gamma}) around the mid-surface:
$$S_{h^\alpha} = \phi_{h^\alpha} (\Omega), \qquad \phi_{h^\alpha} (x)=(x, h^\alpha v_0(x)), \qquad
v_0\in\mathcal{C}^{1,1} (\bar\Omega,\mathbb{R}).$$
We have established that independent of the value of $\alpha$, the scaling
for the infimum of the energy is always determined  
by the prestrain and is of order $h^4$ under our current assumption (\ref{knot0}).

When $\alpha>1$, the prestrain  overwhelms the role of shallowness so that the limiting theory is the one
derived in \cite{lemapa}, coinciding with results of Theorem \ref{thnew}
for the case $v_0=0$ and yielding the Euler-Lagrange equations (\ref{old_Karman}).
When $\alpha=1$ one recovers the recently postulated model \cite{Maha2},
discussed in the present paper. For the case $0<\alpha<1$, the limiting theory reduces to a new
constrained theory and can be viewed as a plate theory where the
non-trivial geometric structure of the shallow shell is inherited by
the plate, or equivalently it can be considered as the natural  
limit of the generalized von K\'arm\'an theories (\ref{vonK}) on the
shallow midsurface $S_\gamma$ as $\gamma\to 0$. This may be contrasted with a similar problem considered 
by the authors in \cite{LMP-new},  where the $\Gamma$-limit is discussed under energy regime of order
$h^{2\alpha+2}$. This order is compatible with the case where the role of shallowness is affected by relative scaled magnitude of the body forces or
prestrains, so that the choice of $\alpha$ has a bearing on the limiting model. Our analysis in the present
paper and in \cite{LMP-new} is this thus the beginning of an exploration from a vast possible scenarios. 
 
A natural generalization of our results would be to allow for different scaling regimes 
for the growth tensors in search of other possible limiting theories. 
Overall, there are three independent parameters: one associated with scaling of the shallowness, 
and two incompatible strains characterized in terms of their dependence on the thickness $h$
in the form  $h^\alpha$. The resulting theories depend on the choice of scalings for 
these three parameters. Thus, there is no single {\it correct} model in general, but of course
when dealing with a concrete situation, a choice of particular scalings 
for the relative magnitude of the thickness, the shallowness and the differential growth 
determines the effective theory, as we have shown here.

\bigskip

\noindent{\bf Acknowledgments.} 
This project is based upon work supported by, among others, the National Science
Foundation. M.L. is partially supported by the NSF grants DMS-0707275 and DMS-0846996, 
and by the Polish MN grant N N201 547438. L.M.  
is supported by the MacArthur Foundation, M.R.P. is partially supported by 
the NSF grants DMS-0907844 and DMS-1210258. 

\newpage

\begin{center}
{\bf \Large  Appendices}
\end{center}

Here we provide a proof of Theorem \ref{shallow<1-liminf} and Theorem \ref{shallow<1-limsup}.
We first derive  bounds on families of vector mappings $\{u^h\}_{h>0}$,
defined on $(S_\gamma)^h$ as in (\ref{shallow-alpha-gamma}) and (\ref{shallow-gamma}),  
under the assumption of smallness on their energy and when $\gamma=h^\alpha$ 
for the scaling regime $0<\alpha<1$. In what follows, by $C$ we denote an arbitrary positive constant, 
depending on $v_0$, but not on $h$ or the vector mapping under consideration.
In all proofs, the convergences are understood up to a subsequence, 
unless stated otherwise.

\appendix

\section{Proof of Theorem \ref{shallow<1-liminf}}

Let  a sequence of deformations $u^h\in W^{1,2}((S_\gamma)^h, \mathbb{R}^3)$ satisfy:
 $$\displaystyle \frac{1}{h} \int_{(S_\gamma)^h}W((\nabla u^h)(q^h)^{-1})~\mbox{d}z \leq C h^4,$$
where, recalling the definition of $\tilde\phi_h:\Omega\times (-h/2,
h/2)\to (S_\gamma)^h$ in (\ref{kl-gamma}), we have:
$$ q^h \circ \tilde \phi_\gamma = \mbox{Id} + h^2 \epsilon_g + hx_3
\kappa_g.$$ 
The following approximation results can be obtained by combining arguments 
in \cite{lemopa7} and \cite{lemapa}, in view of the seminal work \cite{FJMhier}. 

\begin{lemma} \label{appr} 
(Parallel to \cite[Lemma 3.1]{lemopa7}, \cite[Theorem 1.6]{lemapa}.)
There exists a matrix field $R^h\in W^{1,2}(S_\gamma,
\mathbb{R}^{3\times 3})$ with values in $SO(3)$, and a matrix $Q^h\in SO(3)$, 
such that:
\begin{itemize}
\item[(i)] $\displaystyle \frac 1h \|\nabla u^h - R^h\|^2_{L^2(S_\gamma^h)} \leq Ch^4,$
\item[(ii)] $\|\nabla R^h\|_{L^2(S_\gamma)} \leq Ch,$
\item[(iii)] $\|(Q^h)^TR^h - \mathrm{Id}\|_{L^p(S_\gamma)} \leq Ch,~$ for all $p\in [1,\infty)$.
\end{itemize}
\end{lemma}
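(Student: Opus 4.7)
My approach follows the geometric rigidity program of Friesecke--James--M\"uller \cite{FJMgeo}, adapted to curved shells in \cite{lemopa7} and to prestrained plates in \cite{lemapa, FJMhier}. I would proceed in four steps.

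\emph{Step 1 (Energy-to-distance bound).} The nondegeneracy $W(F)\geq c\,\mathrm{dist}^2(F,SO(3))$ combined with the hypothesis gives $\int_{(S_\gamma)^h}\mathrm{dist}^2(\nabla u^h(q^h)^{-1}, SO(3))\,\mathrm{d}z\leq Ch^5$. Multiplying on the right by $q^h$ and using the structure $q^h\circ\tilde\phi_\gamma-\mathrm{Id}=h^2\epsilon_g+hx_3\kappa_g$, I would exploit the odd parity of the $O(h)$ term in $x_3$ to obtain the sharper-than-expected bound $\int_{(S_\gamma)^h}|q^h-\mathrm{Id}|^2\,\mathrm{d}z\leq Ch^5$ (since $\int_{-h/2}^{h/2}x_3^2\,\mathrm{d}x_3=h^3/12$ and the $h^2\epsilon_g$ term contributes $h^4\cdot h=h^5$). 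Combined with the triangle inequality, this yields
\[
\int_{(S_\gamma)^h}\mathrm{dist}^2(\nabla u^h, SO(3))\,\mathrm{d}z \leq Ch^5.
\]

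\emph{Step 2 (Local rigidity on thin patches).} Cover $S_\gamma$ by sets $\{U_i\}$ of diameter of order $h$, each the image of a disk of radius $h$ under a parametrization whose Lipschitz constants are uniform in $h$, available because $v_0\in\mathcal{C}^{1,1}$ and $\gamma=h^\alpha\to 0$ so that $S_\gamma$ is nearly flat at scale $h$. The cylindrical patches $U_i^h=\{x+t\vec n(x);\ x\in U_i,\ |t|<h/2\}$ have fixed aspect ratio after rescaling by $1/h$, so the geometric rigidity theorem in its shell version, as in \cite[Thm.~2.5]{lemopa7}, yields a constant $R_i^h\in SO(3)$ with
\[
\int_{U_i^h}|\nabla u^h - R_i^h|^2\,\mathrm{d}z \leq C\int_{U_i^h}\mathrm{dist}^2(\nabla u^h, SO(3))\,\mathrm{d}z,
\]
the constant $C$ being independent of $h$ and $i$.

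\emph{Step 3 (Patching and a reference rotation).} Using a partition of unity $\{\chi_i\}$ subordinate to $\{U_i\}$ with $|\nabla\chi_i|\leq C/h$, I define $\tilde R^h=\sum_i\chi_i R_i^h$ and let $R^h=\mathrm{proj}_{SO(3)}\tilde R^h$. The overlap estimate $|R_i^h-R_j^h|\leq Ch$, which follows from Step~2 on $U_i^h\cap U_j^h$ together with $|U_i^h\cap U_j^h|\sim h^3$, ensures that the projection onto $SO(3)$ is well defined. Summing the local bounds and controlling $|R^h-\tilde R^h|$ yields (i), while the estimate
\[
\int_{S_\gamma}|\nabla R^h|^2\,\mathrm{d}x \leq \frac{C}{h^3}\int_{(S_\gamma)^h}\mathrm{dist}^2(\nabla u^h, SO(3))\,\mathrm{d}z \leq Ch^2
\]
proves (ii). Finally, the $2$-dimensional Sobolev embedding $W^{1,2}(S_\gamma)\hookrightarrow L^p(S_\gamma)$ for every $p<\infty$ combined with Poincar\'e--Wirtinger gives $\|R^h-\dashint_{S_\gamma}R^h\|_{L^p(S_\gamma)}\leq C_p h$. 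Since $\dashint_{S_\gamma}R^h$ lies within $O(h)$ of $SO(3)$ by Jensen's inequality, its nearest-point projection $Q^h\in SO(3)$ satisfies $|Q^h-\dashint R^h|\leq Ch$, and (iii) follows from the triangle inequality.

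The main technical obstacle is the uniform-in-$h$ local rigidity estimate of Step~2 on the shallow curved patches, and the matching of the sharp scaling exponents in Step~3. Since the curvature of $S_\gamma$ is of order $h^\alpha\to 0$, the shell is essentially flat at scale $h$ and the shell rigidity argument of \cite[Sect.~3]{lemopa7} transplants via the parametrization $\tilde\phi_{h^\alpha}$ of \eqref{kl-gamma} with at most cosmetic modifications.
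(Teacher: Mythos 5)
Your outline is correct and is essentially the argument the paper itself relies on: the paper offers no independent proof of this lemma, deferring to the approximation-by-rotations scheme of \cite[Lemma 3.1]{lemopa7}, \cite[Theorem 1.6]{lemapa} and \cite{FJMhier}, which is precisely your construction -- remove the prestrain at cost $O(h^5)$, apply geometric rigidity with uniform constant on patches of size $h$ (uniformity coming from the $\mathcal{C}^{1,1}$ bound on $v_0$), patch and project the rotations, and finish with Poincar\'e--Sobolev on $S_\gamma$. (A minor remark: the parity observation in your Step 1 is unnecessary, since the pointwise bound $|q^h-\mathrm{Id}|\leq Ch^2$ already yields the $O(h^5)$ estimate.)
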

 
\begin{lemma} \label{lem3.2} (Parallel to \cite[Lemma 3.2]{lemopa7}.) 
Let $R^h, Q^h$ be as in Lemma \ref{appr}. There holds, or all $p\in [1,\infty)$:
\begin{itemize}
\item[(i)] $\displaystyle \lim_{h\to 0}~ (Q^{h})^TR^{h} \circ \phi_\gamma =\mathrm{Id}$, in $W^{1,2}(\Omega)$ and in
$L^p(\Omega)$.
\end{itemize}
Moreover, there exists a $W^{1,2}$ skew-symmetric field $A:S\longrightarrow so(3)$, such that:
\begin{itemize}
\item[(ii)] $\displaystyle \lim_{h\to 0}~ \frac{1}{h} \left( (Q^{h})^TR^{h} 
- \mathrm{Id}\right) \circ \phi_\gamma = A$, weakly in $W^{1,2}(\Omega)$ and (strongly) in $L^p(\Omega)$.
\item[(iii)] $\displaystyle \lim_{h\to 0}~ \frac{1}{h^2} \mathrm{sym }\left( (Q^{h})^TR^{h} 
- \mathrm{Id}\right) \circ \phi_\gamma = \frac{1}{2}A^2$, in $L^p(\Omega)$.
\end{itemize}
\end{lemma}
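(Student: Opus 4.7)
The plan is to leverage the bounds supplied by Lemma \ref{appr} together with the fact that $(Q^h)^T R^h$ takes values in $SO(3)$, and to work throughout on the fixed reference surface $S_\gamma$ (so composition with $\phi_\gamma$ merely transfers estimates to $\Omega$ via a bi-Lipschitz change of variables). Let me set $N^h := (Q^h)^T R^h - \mathrm{Id} \in W^{1,2}(S_\gamma, \mathbb{R}^{3\times 3})$; then $\|N^h\|_{L^p(S_\gamma)} \leq Ch$ by (iii) of Lemma \ref{appr}, and $\nabla N^h = (Q^h)^T \nabla R^h$ since $Q^h$ is a constant matrix, so $\|\nabla N^h\|_{L^2(S_\gamma)} \leq Ch$ by (ii) of Lemma \ref{appr}.

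For part (i), the $L^p$ convergence $(Q^h)^T R^h \circ \phi_\gamma \to \mathrm{Id}$ is immediate from the $L^p$ bound of order $h$ on $N^h$ and the smoothness of $\phi_\gamma$. The $W^{1,2}$ convergence then follows because $\nabla N^h = \mathcal{O}(h)$ in $L^2$. For part (ii), define $A^h := h^{-1} N^h$. The bounds above give $\|A^h\|_{L^p(S_\gamma)} \leq C$ for every $p<\infty$ and $\|\nabla A^h\|_{L^2(S_\gamma)} \leq C$, so $A^h \circ \phi_\gamma$ is bounded in $W^{1,2}(\Omega)$ and, up to a subsequence, converges to some $A$ weakly in $W^{1,2}(\Omega)$ and strongly in $L^p(\Omega)$ by Rellich--Kondrachov. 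To identify $A$ as skew-symmetric, I expand the orthogonality relation $(\mathrm{Id} + hA^h)^T(\mathrm{Id} + hA^h) = \mathrm{Id}$ to obtain
\begin{equation*}
A^h + (A^h)^T = -h\, (A^h)^T A^h.
\end{equation*}
The right-hand side tends to zero in $L^{p/2}$ since $A^h$ is bounded in every $L^p$, so passing to the limit yields $A + A^T = 0$.

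For part (iii), the same identity gives $\mathrm{sym}\,(h^{-2} N^h) = \tfrac{1}{2h^2}(N^h + (N^h)^T) = \tfrac{1}{2h}(A^h + (A^h)^T) = -\tfrac{1}{2} (A^h)^T A^h$. Since $A^h \circ \phi_\gamma \to A$ strongly in every $L^p(\Omega)$ and the $A^h$ are uniformly $L^p$-bounded, the product $(A^h)^T A^h$ converges to $A^T A$ in $L^p$. Using the skew-symmetry $A^T = -A$ from part (ii), we get $A^T A = -A^2$, and therefore $\mathrm{sym}\,(h^{-2} N^h) \circ \phi_\gamma \to \tfrac{1}{2} A^2$ in $L^p(\Omega)$, which is the desired convergence.

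The only subtle point is the passage to the limit in the quadratic term $(A^h)^T A^h$, which is precisely where the combination of the $W^{1,2}$-bound (yielding strong $L^p$ compactness through Rellich) and the uniform $L^p$-bound is essential; all other steps amount to linear manipulations of the $SO(3)$ constraint and the estimates of Lemma \ref{appr}. No further ingredients are needed beyond those already available, so this proof is a direct adaptation of the corresponding arguments in \cite{lemopa7} to the prestrained shallow-shell setting.
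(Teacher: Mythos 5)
Your proof is correct and follows essentially the same route as the paper: you use the $L^p$ and gradient bounds from Lemma \ref{appr} to get weak $W^{1,2}$ and strong $L^p$ compactness of $A^h$, expand the $SO(3)$ constraint $(\mathrm{Id}+hA^h)^T(\mathrm{Id}+hA^h)=\mathrm{Id}$ to obtain $A^h+(A^h)^T=-h(A^h)^TA^h$, deduce skew-symmetry of the limit, and then divide the same identity by $h$ to identify $\tfrac12 A^2$ as the limit in (iii). The only cosmetic difference is that you phrase things in terms of $N^h$ on $S_\gamma$ before composing with $\phi_\gamma$, whereas the paper defines $A^h$ directly on $\Omega$; otherwise the argument is identical.
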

\begin{proof}
The convergences in (i) follow from Lemma \ref{appr}.
To prove (ii), notice that:
$$A^h = \frac{1}{h} \left((Q^h)^T R^h - \mathrm{Id}\right) \circ \phi_\gamma$$
is bounded in $W^{1,2}(\Omega)$ and so it has a weakly converging
subsequence, as $h\to 0$. Consequently, convergence is strong in $L^p(\Omega)$. One has:
\begin{equation}\label{symAh}
A^h + (A^h)^T = \frac{1}{h} \left((Q^h)^T R^h +  (R^h)^TQ^h - 2\mathrm{Id} \right) 
= - {h}(A^h)^TA^h. 
\end{equation} 
The latter converges to $0$ in $L^p(\Omega)$, and therefore the limit matrix field $A$ is skew-symmetric.
The above equality proves as well that:
$$\lim_{h\to 0}  \frac{1}{h} \mbox{ sym } A^h = \frac{1}{2} A^2$$
in $L^p(\Omega)$, which implies (iii).
\end{proof}

Consider (and compare with Theorem \ref{shallow<1-liminf}) the rescaling:
\begin{equation*}
y^h(x + t\vec n^\gamma(x)) = u^h\left(x + t{h}/{h_0}\vec n^\gamma(x)\right) 
\qquad \forall x\in S_\gamma\quad \forall t\in (-h_0/2, h_0/2),
\end{equation*}
so that $y^h\in W^{1,2}((S_\gamma)^{h_0}, \mathbb{R}^3)$. Also, define:
$\nabla_h y^h(x + t\vec n^\gamma(x)) = \nabla u^h\left(x + t{h}/{h_0}\vec n^\gamma(x)\right)$.
In what follows, $\Pi_\gamma=\nabla{\vec n}^\gamma$ denotes the second fundamental form of $S_\gamma$.
By a straightforward calculation we obtain:

\begin{proposition}\label{formule} (Parallel to \cite[Proposition 3.3]{lemopa7}.)
For each $x\in S_\gamma$, $t\in (-h_0/2, h_0/2)$ and $\tau\in T_xS_\gamma$ there hold:
\begin{equation*}
\begin{split}
\partial_{\tau} y^h(x+t\vec n^\gamma) &= 
\nabla_h y^h\left(x + t\vec n^\gamma\right)\left(\mathrm{Id} + t{h}/{h_0}\Pi_\gamma(x)\right)
(\mathrm{Id} + t\Pi_\gamma(x))^{-1} \tau \\
{\partial_{\vec n^\gamma}} y^h(x+t\vec n^\gamma) &= \frac{h}{h_0} 
\nabla_h y^h\left(x + t\vec n^\gamma\right)\vec n^\gamma(x).
\end{split}
\end{equation*}
Moreover, for $I^h(y^h) = \frac{1}{h} \int_{(S_\gamma)^h}W((\nabla u^h)(q^h)^{-1})$ one has:
\begin{equation*}
\begin{split}
I^h(y^h) &= \frac{1}{h_0}\int_{(S_\gamma)^{h_0}} W(\nabla_h y^h (x+t\vec n^\gamma) (q^h)^{-1})\cdot 
\det \left[\left(\mathrm{Id} + t{h}/{h_0}\Pi_\gamma\right)
(\mathrm{Id} + t\Pi_\gamma)^{-1}\right]\\
&= \int_{S_\gamma} \fint_{-h_0/2}^{h_0/2} W(\nabla_h y^h (x+t\vec n^\gamma) (q^h)^{-1})\cdot 
\det \left[\mathrm{Id} + t{h}/{h_0}\Pi_\gamma(x)\right]~\mathrm{d}t~\mathrm{d}x.
\end{split}
\end{equation*}
\end{proposition}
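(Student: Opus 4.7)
The plan is to establish both formulas by a direct chain-rule computation using two Cohn-Vossen style parameterizations of the tubular neighborhoods, followed by a change of variables in the energy integral. I would begin by fixing the diffeomorphisms $\Phi: S_\gamma \times (-h_0/2, h_0/2) \to (S_\gamma)^{h_0}$ and $\Psi^h: S_\gamma \times (-h_0/2, h_0/2) \to (S_\gamma)^h$ given by $\Phi(x,t) = x + t\vec n^\gamma(x)$ and $\Psi^h(x,t) = x + t(h/h_0)\vec n^\gamma(x)$, so that $y^h = u^h \circ \Psi^h \circ \Phi^{-1}$ and $\nabla_h y^h \circ \Phi = \nabla u^h \circ \Psi^h$ by construction.

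For the first pair of formulas the crux is that at $z = \Phi(x,t)$ the differential $d\Phi(x,t)$ maps $\sigma \in T_x S_\gamma$ to $(\mathrm{Id} + t\Pi_\gamma(x))\sigma$ and maps the coordinate direction $\partial_t$ to $\vec n^\gamma(x)$. Consequently, to move in the ambient direction $\tau \in T_xS_\gamma \subset \mathbb{R}^3$ at the off-surface point $z$, one must take the parameter-space tangent vector $\bigl((\mathrm{Id} + t\Pi_\gamma(x))^{-1}\tau,\, 0\bigr)$. Pushing this through $\Psi^h$, whose tangential differential is $(\mathrm{Id} + t(h/h_0)\Pi_\gamma(x))$, and composing with $\nabla u^h$ at $\Psi^h(x,t)$ gives exactly the claimed expression for $\partial_\tau y^h(z)$. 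The normal-derivative formula is analogous but simpler: the preimage of $\vec n^\gamma(x)$ under $d\Phi(x,t)$ is $\partial_t$, and $d\Psi^h(x,t)\partial_t = (h/h_0)\vec n^\gamma(x)$, producing the $h/h_0$ prefactor.

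For the energy identity I would combine Fubini with the tubular-neighborhood area formula: parametrizing $(S_\gamma)^h$ by $\zeta = x + s\vec n^\gamma(x)$ with $s \in (-h/2, h/2)$, the volume element is $d\zeta = \det(\mathrm{Id} + s\Pi_\gamma(x))\, ds\, dA(x)$. Substituting $s = (h/h_0) t$ turns the $s$-integral into one over $t \in (-h_0/2, h_0/2)$ with an extra factor $h/h_0$ which, together with the $1/h$ prefactor in $I^h$, collapses to $1/h_0$; simultaneously the Jacobian becomes $\det(\mathrm{Id} + t(h/h_0)\Pi_\gamma(x))$ and the integrand $\nabla u^h$ is replaced by $\nabla_h y^h \circ \Phi$. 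This already yields the second form with $\fint_{-h_0/2}^{h_0/2}$. To obtain the first form as an integral over $(S_\gamma)^{h_0}$, I would reverse the tubular area formula for $\Phi$, which contributes the inverse factor $\det(\mathrm{Id} + t\Pi_\gamma(x))^{-1}$ and produces precisely the quotient $\det[(\mathrm{Id} + t(h/h_0)\Pi_\gamma)(\mathrm{Id} + t\Pi_\gamma)^{-1}]$ displayed in the statement.

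The computation is essentially routine bookkeeping, so I do not expect any single step to pose a serious obstacle. The one place to remain alert is the distinction between tangential differentiation \emph{along} $S_\gamma$ (which would not produce the factor $(\mathrm{Id} + t\Pi_\gamma)^{-1}$) and the ambient directional derivative in the direction $\tau$ at the off-surface point $z$, which is what the proposition means; correctly pulling $\tau$ back through $d\Phi$ is essential. Similarly, the two distinct rescalings --- one in the normal coordinate to pass between $(S_\gamma)^h$ and $(S_\gamma)^{h_0}$, the other implicit in the definition of $\nabla_h y^h$ --- must be tracked consistently to avoid losing a factor of $h/h_0$.
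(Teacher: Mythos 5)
Your proposal is correct and is exactly the ``straightforward calculation'' the paper invokes (following \cite[Proposition 3.3]{lemopa7}): the chain rule through the two tubular-neighborhood maps $x+t\vec n^\gamma$ and $x+t(h/h_0)\vec n^\gamma$ gives the derivative formulas, and the change of variables $s=(h/h_0)t$ with Jacobian $\det(\mathrm{Id}+s\Pi_\gamma)$ gives the energy identities. No discrepancies with the paper's (omitted, routine) argument.
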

Directly from Lemma \ref{appr} (i) and Lemma \ref{lem3.2} (ii) 
there follows:

\begin{proposition} \label{help}  (Parallel to \cite[Proposition 3.4]{lemopa7}.)
\begin{itemize}
\item[(i)] $\displaystyle \|\nabla_h y^h - R^h\|_{L^2((S_\gamma)^{h_0})}\leq Ch^2$. 
\item[(ii)] $\displaystyle \lim_{h\to 0} \frac{1}{h}\left((Q^h)^T\nabla_h y^h - \mathrm{Id}\right) \circ \tilde \phi_\gamma
= A$, in $L^{2}(\Omega^{h_0})$.
\end{itemize}
\end{proposition}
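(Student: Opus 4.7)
The plan is to derive both parts from the already-established Lemma \ref{appr} (the bound on $\nabla u^h - R^h$) and Lemma \ref{lem3.2} (the convergence of $h^{-1}((Q^h)^T R^h - \mathrm{Id})$), together with the direct change of variables induced by the rescaling $t \mapsto (h/h_0)\, t$ along the normal direction.

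For part (i), the formula $\nabla_h y^h(x + t\vec n^\gamma) = \nabla u^h(x + (th/h_0)\vec n^\gamma)$ in Proposition \ref{formule} identifies the integrand in $\|\nabla_h y^h - R^h\|^2_{L^2((S_\gamma)^{h_0})}$ as $|\nabla u^h(x + (th/h_0)\vec n^\gamma) - R^h(x)|^2$ weighted by $\det(\mathrm{Id} + t\Pi_\gamma)$, integrated over $S_\gamma \times (-h_0/2, h_0/2)$. The substitution $s = th/h_0$ transforms this integral into one over $S_\gamma \times (-h/2, h/2)$ with Jacobian $h_0/h$, and the resulting determinant factor $\det(\mathrm{Id} + (sh_0/h)\Pi_\gamma)$ is boundedly comparable to the natural factor $\det(\mathrm{Id} + s\Pi_\gamma)$ on $(S_\gamma)^h$, since $sh_0/h$ remains in $(-h_0/2, h_0/2)$ where $\Pi_\gamma$ is bounded (recall $v_0 \in \mathcal{C}^{1,1}$). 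Therefore
\[
\|\nabla_h y^h - R^h\|^2_{L^2((S_\gamma)^{h_0})} \leq C \cdot \frac{h_0}{h} \cdot \|\nabla u^h - R^h\|^2_{L^2((S_\gamma)^h)} \leq C h^4,
\]
where the last inequality is precisely Lemma \ref{appr}(i). Taking the square root yields the claim.

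For part (ii), I will decompose
\[
\frac{1}{h}\bigl((Q^h)^T \nabla_h y^h - \mathrm{Id}\bigr) = \frac{1}{h}(Q^h)^T(\nabla_h y^h - R^h) + \frac{1}{h}\bigl((Q^h)^T R^h - \mathrm{Id}\bigr).
\]
The first summand has $L^2((S_\gamma)^{h_0})$-norm bounded by $Ch$ thanks to part (i) and $Q^h \in SO(3)$, and therefore vanishes in the limit after precomposition with $\tilde\phi_\gamma$. For the second summand, since both $R^h$ and the constant rotation $Q^h$ depend only on the tangential variable $x \in S_\gamma$, the precomposition with $\tilde\phi_\gamma$ produces a function on $\Omega^{h_0}$ that is independent of the thickness coordinate $x_3$. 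Its $L^2(\Omega^{h_0})$ convergence to $A$ is then equivalent to $L^2(\Omega)$ convergence, which is exactly the content of Lemma \ref{lem3.2}(ii).

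The only nontrivial step is the change-of-variable argument in (i), where one must track the measure factor $h_0/h$ coming from the rescaling of the normal coordinate and confirm the bounded distortion of the volume element due to $\Pi_\gamma$; everything else is assembled routinely from the preceding lemmas, following the template of \cite[Proposition~3.4]{lemopa7}.
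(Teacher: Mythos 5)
Your proposal is correct and follows exactly the route the paper intends: the paper simply asserts that Proposition \ref{help} follows ``directly from Lemma \ref{appr} (i) and Lemma \ref{lem3.2} (ii),'' and your argument supplies precisely the missing routine details — the change of variables $s=th/h_0$ producing the factor $h_0/h$ and the uniform comparability of the Jacobian determinants for (i), followed by the decomposition into $(Q^h)^T(\nabla_h y^h - R^h)$ and $(Q^h)^TR^h-\mathrm{Id}$ for (ii). No gap.
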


We consider the corrected by rigid motions deformations 
$\tilde y^h\in W^{1,2}((S_\gamma)^{h_0},\mathbb{R}^3)$ and averaged displacements
$V^h\in W^{1,2}(S_\gamma, \mathbb{R}^3)$:
\begin{equation*}
\tilde y^h = (Q^h)^T y^h - c^h,\qquad
V^h = V^h[\tilde y^h] 
= \frac{1}{h}\fint_{-h_0/2}^{h_0/2} \tilde y^h(x+t\vec n^\gamma) - x ~\mathrm{d}t,
\end{equation*}
where the constants $c^h$ are chosen so that $\fint_{\Omega} V^h \circ \phi_\gamma = 0$.

\begin{lemma} \label{lem3.4}  (Parallel to \cite[Lemma 3.5]{lemopa7}.) 
\begin{itemize}
\item[(i)] $\displaystyle \lim_{h\to 0} (\tilde y^{h}\circ \tilde \phi_\gamma  - \phi_\gamma) =0$ in $W^{1,2}(\Omega^{h_0}).$
\item[(ii)] $\displaystyle \lim_{h\to 0} (V^{h}\circ \phi_\gamma) = V$  in $W^{1,2}(\Omega)$.
\end{itemize} 
The vector field $V$ in (ii) has regularity $W^{2,2}(\Omega, \mathbb{R}^3)$ 
and it satisfies $\partial_\tau V (x) = A(x) \tau$
for all $\tau\in \mathbb{R}^2S$. The $W^{1,2}$ skew-symmetric matrix field
$A:S\longrightarrow so(3)$ is as in Lemma \ref{lem3.2}.
\end{lemma}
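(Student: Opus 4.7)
The strategy is to transfer the asymptotic information about the rescaled gradients $\nabla_h y^h$, supplied by Propositions \ref{formule} and \ref{help} together with Lemma \ref{lem3.2}, through the diffeomorphism $\tilde\phi_\gamma$ onto the fixed reference domain $\Omega^{h_0}$, where standard Poincar\'e inequalities upgrade $L^2$ gradient convergence to $W^{1,2}$ convergence of the maps themselves.

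For part (i), combining Proposition \ref{help}(i), namely $\nabla_h y^h - R^h\to 0$ in $L^2((S_\gamma)^{h_0})$, with the $L^p$-convergence $(Q^h)^TR^h\to \mathrm{Id}$ from Lemma \ref{lem3.2}(i), immediately yields
\[
\nabla_h \tilde y^h \;=\; (Q^h)^T \nabla_h y^h \;\longrightarrow\; \mathrm{Id} \quad \mbox{in } L^2((S_\gamma)^{h_0}).
\]
Using the chain rule for $\tilde y^h \circ \tilde\phi_\gamma$ together with the tangential and normal derivative formulas in Proposition \ref{formule}, and observing that $(\mathrm{Id}+t\frac{h}{h_0}\Pi_\gamma)(\mathrm{Id}+t\Pi_\gamma)^{-1}\to (\mathrm{Id}+t\Pi_\gamma)^{-1}$ uniformly in $(x,t)$, I would obtain $\nabla(\tilde y^h\circ\tilde\phi_\gamma)\to \nabla\phi_\gamma$ in $L^2(\Omega^{h_0})$. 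The translations $c^h$ were chosen precisely so that the mean of $V^h\circ\phi_\gamma$ on $\Omega$ vanishes, which normalizes the mean of $\tilde y^h\circ\tilde\phi_\gamma - \phi_\gamma$ correspondingly, and a routine Poincar\'e inequality on $\Omega^{h_0}$ then promotes the gradient convergence to the stated $W^{1,2}$ convergence.

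For part (ii), the key refinement is the $L^2$-expansion
\[
(Q^h)^T \nabla_h y^h \;=\; \mathrm{Id} + h A^h + o(h),
\]
where $A^h := h^{-1}((Q^h)^T R^h - \mathrm{Id})\circ\phi_\gamma$ converges weakly in $W^{1,2}(\Omega)$ and strongly in every $L^p(\Omega)$ to the skew-symmetric field $A$ by Lemma \ref{lem3.2}(ii), while $\nabla_h y^h$ and $R^h$ differ by $O(h^2)$ in $L^2$ via Proposition \ref{help}(i). Differentiating the definition of $V^h\circ\phi_\gamma$ in the $y$ variable, expressing $\nabla(\tilde y^h\circ\tilde\phi_\gamma)$ through Proposition \ref{formule} in terms of $\nabla_h y^h$ and the geometric factor, and integrating the expansion in $t$, one extracts the limit
\[
\nabla(V^h\circ\phi_\gamma) \;\longrightarrow\; A\,\nabla\phi_\gamma \quad \mbox{in } L^2(\Omega),
\]
after the leading identity contributions cancel against $h^{-1}\nabla\phi_\gamma$. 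Combined with the zero-mean normalization and Poincar\'e's inequality, this yields the $W^{1,2}$-convergence $V^h\circ\phi_\gamma \to V$ together with the pointwise identity $\partial_\tau V(x) = A(x)\tau$ for $\tau\in T_xS_\gamma$. Since $A\in W^{1,2}(\Omega)$, this identity automatically promotes $V$ to $W^{2,2}(\Omega,\mathbb{R}^3)$.

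The principal obstacle is the careful bookkeeping in the $h$-expansion underlying (ii): one must simultaneously track the cancelling $O(1/h)$ identity terms, the $O(1)$ contribution producing $A$, the $O(h)$ influence of the curvature factor $\Pi_\gamma$, and the $o(h)$ remainder in $(Q^h)^T\nabla_h y^h - \mathrm{Id} - hA^h$, all controlled in $L^2$ uniformly in $t\in(-h_0/2,h_0/2)$ before the $t$-averaging collapses them into the single limit identity. Everything beyond this accounting reduces to standard applications of the Poincar\'e inequality and the change of variables formula, making the argument a close analogue of Lemma 3.5 in \cite{lemopa7}.
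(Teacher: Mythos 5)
Your proposal is correct and follows essentially the same path as the paper: transfer the rescaled-gradient asymptotics from Propositions \ref{formule}, \ref{help} and Lemma \ref{lem3.2} through $\tilde\phi_\gamma$ to $\Omega^{h_0}$, identify the $O(1)$ term with $A^h\to A$, and conclude by Poincar\'e. The only (cosmetic) divergence is in part (i): you work directly with the Lebesgue mean on the product domain $\Omega^{h_0}$ (which vanishes because of the choice of $c^h$), whereas the paper applies a weighted Poincar\'e inequality on $(S_\gamma)^{h_0}$ and must separately show that the weighted mean tends to zero (their equation (\ref{conve})); your variant is marginally more direct and equally valid here since $\gamma\to 0$ keeps the Jacobian factors uniformly bounded.
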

\begin{proof} {\bf 1.} 
In view of Proposition \ref{formule} and Proposition \ref{help} we have:
\begin{equation}\label{important}
\begin{split}
&\left\|\nabla_{tan}\tilde y^h - \left((Q^h)^T R^h\right)_{tan}\cdot 
(\mathrm{Id} + th/h_0 \Pi_\gamma^h) (\mathrm{Id} + t\Pi_\gamma)^{-1} 
\right\|_{L^2(S_\gamma^{h_0})} \leq Ch^2\\
&\left\|\partial_{\vec n^\gamma}\tilde y^h \right\|_{L^2((S_\gamma)^{h_0})} \leq
Ch\|\nabla_h y^h\|_{L^2(S_\gamma^{h_0})} \leq Ch.
\end{split}
\end{equation}
To prove convergence of $V^h \circ \phi_\gamma$, consider for $x\in S_\gamma$:
\begin{equation}\label{nabla_Vh}
\begin{split}
\nabla V^h(x) &= \frac{1}{h} \fint_{-h_0/2}^{h_0/2} \nabla_{tan}\tilde y^h(x+t\vec n^\gamma)
(\mathrm{Id} + t\Pi_\gamma) - \mathrm{Id} ~\mbox{d}t\\ 
&= \frac{1}{h} \fint_{-h_0/2}^{h_0/2} \Big(\nabla_{tan}\tilde y^h
- \left((Q^h)^T R^h\right)_{tan} (\mathrm{Id} + t\Pi_\gamma)^{-1}\Big) (\mathrm{Id} + t\Pi_\gamma) ~\mbox{d}t\\ 
&\quad + \frac{1}{h} \Big((Q^h)^T R^h (x) - \mathrm{Id}\Big)_{tan} = A^h \circ (\phi_\gamma)^{-1}+ {\mathcal O}(h).
\end{split}
\end{equation}
We also have: $ \nabla (V^h\circ \phi_\gamma) = (\nabla V^h \circ \phi_\gamma ) (\nabla \phi_\gamma),$ hence: 

\begin{equation}\label{nabla_Vh-local}
\begin{split}
\nabla (&V^h \circ \phi_\gamma)  = \Big [( \frac{1}{h} \fint_{-h_0/2}^{h_0/2} \nabla_{tan}\tilde y^h 
(\mathrm{Id} + t\Pi_\gamma) - \mathrm{Id} ~\mbox{d}t  )\circ  \phi_\gamma \Big] \nabla \phi_\gamma \\ 
&= \Big [( \frac{1}{h} \fint_{-h_0/2}^{h_0/2} \Big(\nabla_{tan}\tilde y^h
- \left((Q^h)^T R^h\right)_{tan} (\mathrm{Id} + t\Pi_\gamma)^{-1}\Big)
(\mathrm{Id} + t\Pi_\gamma) ~\mbox{d}t) \circ \phi_\gamma \Big ]  
\nabla \phi_\gamma \\  &\quad + \Big [( \frac{1}{h} \Big((Q^h)^T R^h
(x) - \mathrm{Id}\Big)_{tan})\circ \phi_\gamma \Big ] \nabla
\phi_\gamma  
= A^h \nabla \phi_\gamma + \mathcal O(h).
\end{split}
\end{equation}
Therefore, $\nabla (V^h\circ \phi_\gamma)$  converges to $A_{tan}$ in
$L^2(\Omega)$ and since $\fint_{\Omega} V^h \circ \phi_\gamma = 0$,  
we may use Poincar\'e inequality on $\Omega$ to deduce (ii).

{\bf 2.}
To prove (i), notice that by (\ref{important}) and Lemma \ref{lem3.2} we obtain the following 
convergences in $L^2(\Omega^{h_0})$:
\begin{equation*}
\begin{split}
&\lim_{h\to 0}\nabla (\tilde y^h \circ \tilde \phi_\gamma)  - \nabla \tilde \phi_\gamma
 =\lim_{h\to 0} \Big ( (\nabla \tilde y^h - \mbox{Id})\circ \tilde \phi_\gamma\Big ) \nabla \tilde \phi_\gamma  =0,\\
&\lim_{h\to 0}{\partial_3}(\tilde y^h \circ \tilde \phi_\gamma)=\lim_{h\to 0} (\partial_{\vec n^\gamma} \tilde y^h)\circ \tilde \phi_\gamma = 0.
\end{split}
\end{equation*}
Therefore $\nabla (\tilde y^h\circ \tilde \phi_\gamma ) -\nabla\phi_\gamma$ converges to $0$ in $L^2(\Omega^{h_0})$.
Since the sequence $\{V^h \circ \phi_\gamma\}$ is bounded in $L^2(\Omega)$, it also follows that:
\begin{equation}\label{conve}
\lim_{h\to 0} \left\|\int_{-h_0/2}^{h_0/2}\tilde y^h (\tilde
  \phi_\gamma) - x ~\mbox{d}t\right\|_{L^2(S_\gamma)} = 0.
\end{equation}
Now, let $g(x+t\vec n^\gamma) = |\mbox{det } (\mathrm{Id} +t\Pi_\gamma(x))|^{-1}$. Consider
the two terms in the right hand side of:
$$\|\tilde y^h - \pi\|_{L^2((S_\gamma)^{h_0})} \leq
\left\|(\tilde y^h - \pi) - \int_{(S_\gamma)^{h_0}}(\tilde y^h - \pi)\cdot \frac{g}{\scriptstyle \int_{(S_\gamma)^{h_0}} g}
\right\|_{L^2((S_\gamma)^{h_0})}
+ ~\left|\int_{S_\gamma^{h_0}}(\tilde y^h - \pi)\cdot \frac{g}{\scriptstyle\int_{(S_\gamma)^{h_0}} g}\right|.$$
The first term  can be bounded by means of the weighted Poincar\'e inequality, by
$\|\nabla (\tilde y^h -\pi)\|_{L^2(S_\gamma^{h_0})}$ and therefore it converges to $0$ as $h\to 0$.
The second term converges to $0$ as well, in view of (\ref{conve}) and:
$$\left|\int_{(S_\gamma)^{h_0}} (\tilde y^h - \pi)\cdot g\right| = 
\left|\int_{S_\gamma}\int_{-h_0/2}^{h_0/2} \tilde y^h - \pi ~\mbox{d}t~\mbox{d}x\right| \leq
C \left\|\int_{-h_0/2}^{h_0/2}\tilde y^h - \pi ~\mbox{d}t\right\|_{L^2(S_\gamma)}.$$
\end{proof}

\begin{proposition}\label{new-constraint}
We have: 
\begin{itemize}
\item[(i)] 
$ \displaystyle \lim_{h\to 0} \frac{1}{h^\alpha} \Big ( (\tilde y^h
\circ \tilde \phi_\gamma)  - x \Big ) = v_0 e_3$ 
in $W^{1,2}(\Omega, \mathbb{R}^3)$,
\item[(ii)] $\displaystyle \mathrm{cof}\,\,\nabla^2 v_0: \nabla^2 V^3
  = \mathrm{cof}\,\,\nabla^2 v_0: \nabla^2 v =0$ in $\Omega$.
\end{itemize}
\end{proposition}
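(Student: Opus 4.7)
\textbf{Proof plan for Proposition \ref{new-constraint}.} For part (i), the natural starting point is the definition of the averaged displacement: since
\[
hV^h(\phi_\gamma(x)) = \fint_{-h_0/2}^{h_0/2}(\tilde y^h \circ \tilde\phi_\gamma)(x,t)\, \mathrm{d}t - \phi_\gamma(x),
\]
and $\phi_\gamma(x)-x = h^\alpha v_0(x)\,e_3$, dividing by $h^\alpha$ gives
\[
\frac{1}{h^\alpha}\Bigl[\fint_{-h_0/2}^{h_0/2}(\tilde y^h \circ \tilde\phi_\gamma)(x,t)\, \mathrm{d}t - x\Bigr] = v_0\,e_3 + h^{1-\alpha}\,(V^h\circ \phi_\gamma)(x).
\]
By Lemma \ref{lem3.4}(ii), $V^h\circ\phi_\gamma \to V$ in $W^{1,2}(\Omega,\mathbb{R}^3)$, and since $1-\alpha > 0$ the remainder tends to zero in $W^{1,2}(\Omega)$. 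To upgrade this thickness-averaged statement to a pointwise-in-$t$ convergence, I would invoke Poincar\'e's inequality in the normal direction together with $\|\partial_{\vec n^\gamma}\tilde y^h\|_{L^2((S_\gamma)^{h_0})} \leq Ch$ from (\ref{important}), which loses only a factor $h/h^\alpha = h^{1-\alpha}\to 0$.

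For part (ii), the strategy is to apply $\mathrm{curl}^T\mathrm{curl}$ to the finite strain identity and exploit the distributional identity recorded in Remark \ref{rem4.7}. From the energy bound $I^{h^\alpha,h}(u^h)\leq Ch^4$ one first extracts the a priori control asserted in Theorem \ref{shallow<1-liminf}(iii), namely
\[
\mathrm{sym}\nabla(V^h_1, V^h_2) + h^\alpha\,\mathrm{sym}(\nabla V^h_3 \otimes \nabla v_0) = h\,B_h, \qquad \|B_h\|_{L^2(\Omega)}\leq C,
\]
by expanding the first fundamental form of the midplane map $\bar\psi^h = \phi_\gamma + h(V^h\circ\phi_\gamma)$ and comparing it with the pulled-back prestrain metric computed as in Lemma \ref{lemik}. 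Applying $\mathrm{curl}^T\mathrm{curl}$ annihilates the symmetric gradient summand, while Remark \ref{rem4.7} converts the second into $-h^\alpha\,\mathrm{cof}\,\nabla^2 v_0 : \nabla^2 V^h_3$, interpreted as the $W^{-2,2}(\Omega)$ distribution $-\mathrm{curl}^T\mathrm{curl}[\mathrm{sym}(\nabla V^h_3\otimes\nabla v_0)]$ (well-defined since $v_0\in \mathcal{C}^{1,1}(\bar\Omega)$ and $V^h_3 \in W^{1,2}$). Consequently
\[
\mathrm{cof}\,\nabla^2 v_0 : \nabla^2 V^h_3 = -h^{1-\alpha}\,\mathrm{curl}^T\mathrm{curl}\, B_h \longrightarrow 0 \quad \text{in } W^{-2,2}(\Omega).
\]
On the other hand, the convergence $V^h_3\to v$ in $W^{1,2}(\Omega)$ yields $\mathrm{sym}(\nabla V^h_3\otimes\nabla v_0)\to \mathrm{sym}(\nabla v\otimes\nabla v_0)$ strongly in $L^2$, whence by the same identity $\mathrm{cof}\,\nabla^2 v_0 : \nabla^2 V^h_3 \to \mathrm{cof}\,\nabla^2 v_0 : \nabla^2 v$ in $W^{-2,2}$. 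Matching the two limits forces $\mathrm{cof}\,\nabla^2 v_0 : \nabla^2 v = 0$ as a distribution; since $\mathrm{cof}\,\nabla^2 v_0 \in L^\infty(\Omega)$ while $\nabla^2 v \in L^2(\Omega)$, the identity in fact holds in $L^2(\Omega)$ and hence almost everywhere.

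The main technical hurdle is establishing $\|B_h\|_{L^2}\leq C$ at this juncture: it requires an $L^2$-expansion of the metric defect between $(\nabla\bar\psi^h)^T\nabla\bar\psi^h$ and the pulled-back prestrain $g^h$, using the polar decomposition and the nondegeneracy in (\ref{en_as}) through Lemma \ref{appr}; conceptually this is the standard von K\'arm\'an-type compactness argument adapted to the shallow setting. A secondary subtlety is the correct distributional interpretation of $\mathrm{cof}\,\nabla^2 v_0 : \nabla^2 V^h_3$ when $V^h_3$ is merely in $W^{1,2}$; the point is that the identity of Remark \ref{rem4.7} provides a uniform $W^{-2,2}$-representation that is continuous under weak $W^{1,2}$-convergence, which legitimizes the passage to the limit.
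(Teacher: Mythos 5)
Your argument is correct, and for part (ii) it takes a genuinely different (and slightly cleaner) route than the paper. The paper first shows, using (\ref{nabla_Vh-local}) and the skew-symmetry relation (\ref{symAh}) for $A^h$, that $\mathrm{sym}\,\nabla(v_1^h,v_2^h)+h^\alpha\,\mathrm{sym}(\nabla v_3^h\otimes\nabla v_0)=\mathcal{O}(h)$ in $L^2$; it then divides by $h^\alpha$, passes to the limit, invokes Korn's inequality to represent $\lim_{h\to 0}h^{-\alpha}\mathrm{sym}\,\nabla(v_1^h,v_2^h)$ as $-\mathrm{sym}\,\nabla\tilde w$, and only then applies $\mathrm{curl}^T\mathrm{curl}$ together with the identity of Remark \ref{rem4.7}. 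You instead apply $\mathrm{curl}^T\mathrm{curl}$ at finite $h$, use that it annihilates symmetric gradients of $W^{1,2}$ fields, and pass to the limit in $W^{-2,2}$ via the strong $L^2$ convergence of $\mathrm{sym}(\nabla V_3^h\otimes\nabla v_0)$; this bypasses Korn entirely and only needs the $L^2$ bound $\|B_h\|_{L^2}\leq C$, which is exactly the paper's $\mathcal{O}(h)$ statement. Two cautions: do not quote Theorem \ref{shallow<1-liminf}(iii) for this bound (that is part of what is being proved), but your worry that it is "the main technical hurdle" is also misplaced — it drops out directly from Lemma \ref{appr} and Lemma \ref{lem3.2} through the computation $\langle\partial_i\phi_\gamma,(A^h+(A^h)^T)\partial_j\phi_\gamma\rangle=-h\langle\partial_i\phi_\gamma,(A^h)^TA^h\partial_j\phi_\gamma\rangle$, rather than from a separate metric-defect expansion of the midplane map. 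For part (i), your route through the averaged-displacement identity $h\,V^h\circ\phi_\gamma=\fint(\tilde y^h\circ\tilde\phi_\gamma)\,\mathrm{d}t-\phi_\gamma$ and the factor $h^{1-\alpha}$ is in substance the paper's (which simply cites Lemma \ref{lem3.4}(i)); just note that for the non-averaged $W^{1,2}$ statement the tangential gradients also need the $\mathcal{O}(h)$ bound on $\nabla(\tilde y^h\circ\tilde\phi_\gamma)-\nabla\tilde\phi_\gamma$ coming from (\ref{important}) and Lemma \ref{lem3.2}, in addition to your Poincar\'e argument in the normal direction.
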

\begin{proof} The statement (i) easily follows from  Lemma
  \ref{lem3.4} (i).  By (\ref{nabla_Vh-local}) and \eqref{symAh} we calculate: 
\begin{equation*} 
\begin{split} 
\forall i,j=1,2 \quad  & 2 \left\langle\partial_i \phi_\gamma, (\mbox{sym} \nabla V^h)  \partial_j \phi_\gamma\right\rangle 
= \left\langle \partial_i (V^h \circ \phi_\gamma), \partial_j
  \phi_\gamma \right\rangle + \left\langle\partial_j (V^h \circ \phi_\gamma), \partial_i \phi_\gamma\right\rangle
\\ & = \left\langle \partial_j \phi_\gamma,  A^h \partial_i
  \phi_\gamma\right\rangle + \left\langle\partial_i \phi_\gamma,  A^h \partial_j \phi_\gamma\right\rangle + 
\mathcal O(h) = \mathcal O(h), 
\end{split}
\end{equation*} 
and hence, denoting: $V^h\circ \phi_\gamma= (v_1^h, v_2^h, v_3^h)$, we get:
\begin{equation*} 
\begin{split} 
\mathcal O(h) = 2 \left\langle\partial_i \phi_\gamma, (\mbox{sym} \nabla V^h)  \partial_j \phi_\gamma\right\rangle & 
= \left\langle\partial_i (V^h \circ \phi_\gamma), \partial_j
  \phi_\gamma\right\rangle  + \left\langle \partial_j (V^h \circ
  \phi_\gamma), \partial_i \phi_\gamma\right\rangle
\\ & = 2\left\langle e_i, \big(\mbox{sym} \nabla(v^h_1, v^h_2) +
  h^\alpha \mbox{sym} (\nabla v^h_3 \otimes \nabla v_0)\big)  e_j\right\rangle.
\end{split}
\end{equation*} 
Dividing by $h^\alpha$ and passing to $0$ in $h$ we obtain: 
$$  \lim_{h\to 0}  \Big (\mbox{sym} (\nabla v^h_3 \otimes \nabla v_0)
+  \frac {1}{h^\alpha} \mbox{sym} \nabla(v^h_1, v^h_2) \Big)=0. $$ 

On the other hand $V^h \circ \phi_\gamma$ converges to $V$ in $\Omega$
and so $\mbox{sym} \nabla V=0$ imply that $(v^h_1, v^h_2)$ 
converge to a constant, while
$v^h_3$ converges to $V^3=v\in W^{2,2}(\Omega)$. Passing to the limit we obtain:
$$  \mbox{sym} \big(\nabla v \otimes \nabla v_0\big)  = - \lim_{h\to
  0} \frac {1}{h^\alpha} \mbox{sym} \nabla(v^h_1, v^h_2)  
= - \mbox{sym} \nabla \tilde w,  $$
for some $\tilde w \in W^{1,2}(\Omega)$ (we used Korn's inequality for
deducing the existence of $\tilde w$). By applying the operator
$\mbox{curl}^T\mbox{curl}$ on both sides, we conclude:
$$ \displaystyle \mbox{cof }\nabla^2 v_0: \nabla^2 v = 0,$$ 
as claimed in (ii).
\end{proof} 

We now need to study the following sequence of matrix fields on $(S_\gamma)^{h_0}$:
$$G^h = \frac{1}{{h}} \Big((R^h)^T \nabla_h y^h - \mathrm{Id}\Big) \circ \tilde \phi_\gamma.$$
In view of Proposition \ref{help} (i), the tensor $2\mbox{sym } G^h$ is the $h^2$ order term in the 
expansion of the nonlinear strain $(\nabla u^h)^T \nabla u^h$, at $\mbox{Id}$.

\begin{lemma} \label{lem3.6} (Parallel to \cite[Lemma 3.6]{lemopa7}.)
The sequence $\{G^h\} $ as above has a subsequence, converging weakly
in $L^2(\Omega^{h_0})$ to a matrix field $G$.  The tangential minor of $G$ is
affine in the $e_3$ direction.  More precisely:
$$ G(x,t)_{2\times 2} = G_0(x)_{2\times 2} - \frac{t}{h_0} \nabla^2
v(x), ~~\mbox{ with }~~ G_0(x) = \fint_{-h_0/2}^{h_0/2} G(x, t)~\mathrm{d}t.$$
\end{lemma}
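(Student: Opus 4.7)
The plan is to mirror the argument of \cite[Lemma 3.6]{lemopa7}, adapted to our shallow-shell geometry where the shape operator $\Pi_\gamma=\nabla\vec n^\gamma$ is of order $h^\alpha$. The first step is to establish that $\{G^h\}$ is bounded in $L^2(\Omega^{h_0},\mathbb{R}^{3\times 3})$. This follows from Proposition \ref{help}(i), the uniform bound $|R^h|\le\sqrt 3$, and the fact that the Jacobian $\det(\mathrm{Id}+t\Pi_\gamma)$ of the change of variables $\tilde\phi_\gamma$ lies uniformly in a compact interval bounded away from $0$. Weak $L^2$ compactness then yields, along a subsequence, a limit $G\in L^2(\Omega^{h_0},\mathbb{R}^{3\times 3})$, and we set $G_0(x)=\fint_{-h_0/2}^{h_0/2} G(x,t)\,\mathrm{d}t$.

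The heart of the proof is the identification of the $t$-dependence of the tangential minor $G_{2\times 2}$. The idea is to compute $\partial_t(G^h)_{2\times 2}$ distributionally and pass to the limit. Using Proposition \ref{formule}, we have $\partial_t y^h=(h/h_0)\,\nabla_h y^h\cdot\vec n^\gamma$ and $\partial_\tau y^h=\nabla_h y^h\,\mathcal{M}^h\tau$ with $\mathcal{M}^h=(\mathrm{Id}+(th/h_0)\Pi_\gamma)(\mathrm{Id}+t\Pi_\gamma)^{-1}=\mathrm{Id}+\mathcal{O}(h^\alpha)$ for $t$ bounded. These formulas express the tangential components of $\nabla_h y^h$, and hence $(G^h)_{2\times 2}$, in terms of tangential derivatives of $\tilde y^h$ plus vanishing $\mathcal{O}(h^\alpha)$ geometric corrections. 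The Schwarz identity $\partial_t\partial_i\tilde y^h=\partial_i\partial_t\tilde y^h$, valid distributionally since $\tilde y^h\in W^{1,2}$, then rewrites $\partial_t(G^h)_{2\times 2}$ in terms of tangential gradients of a rescaled normal derivative of $\tilde y^h$ of the form $(h_0/h^2)\partial_t\tilde y^h$.

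This rescaled normal derivative is, after using Lemma \ref{lem3.2} and Proposition \ref{help}(ii) to pass from $R^h$ to $Q^h$, directly governed by the displacement field $V^h$. Lemma \ref{lem3.4}(ii) and Proposition \ref{new-constraint} identify its limit in terms of $(0,0,v)$ with $v\in W^{2,2}(\Omega)$, so that a further tangential differentiation produces $\nabla^2 v$. Performing all manipulations by testing against smooth $\psi\in C_c^\infty(\Omega^{h_0})$ and integrating by parts in $t$ before sending $h\to 0$, one obtains
\[
\partial_t G(x,t)_{2\times 2}=-\frac{1}{h_0}\nabla^2 v(x)
\]
distributionally on $\Omega^{h_0}$. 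Integration in $t$, together with the constraint $\fint G(\cdot,t)\,\mathrm{d}t=G_0$, yields the claimed affine formula.

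The main obstacle is the low Sobolev regularity: $\nabla_h y^h$ is only in $L^2$, so its normal derivative has no pointwise meaning and one cannot naively commute derivatives of $y^h$. This is handled by performing all differentiations and integrations by parts on $\tilde y^h\in W^{1,2}$ (for which the Schwarz identity holds distributionally) and only then passing to the weak limit using the already established convergences for $V^h$, $R^h$, and $Q^h$. A secondary technicality is the uniform control of the shallow-shell errors of order $h^\alpha$ generated by $\Pi_\gamma$ and $\mathcal{M}^h$; these are absorbed as vanishing contributions in the limit since $\alpha>0$.
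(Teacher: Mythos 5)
Your proposal is correct and reaches the same conclusion, but it takes a distributional route where the paper employs a discrete one. The paper's argument does not differentiate $G^h$ in $t$ directly; instead, for each fixed $s>0$ it introduces the finite difference quotients
$f^{s,h}(x+t\vec n^\gamma)=\tfrac{1}{sh^2}\big[(h_0\tilde y^h-h\,\mathrm{id})(x+(t+s)\vec n^\gamma)-(h_0\tilde y^h-h\,\mathrm{id})(x+t\vec n^\gamma)\big]$,
rewrites them as the $\sigma$-average of $\tfrac1h\big((Q^h)^T\nabla_h y^h-\mathrm{Id}\big)\vec n^\gamma$, deduces $L^2$-convergence to $Ae_3$ from Proposition~\ref{help}(ii), and then upgrades this to weak $W^{1,2}$-convergence by computing the tangential and normal gradients of $f^{s,h}$ explicitly via Proposition~\ref{formule}; equating the two expressions for the weak limit of $\nabla f^{s,h}$ gives $\partial_i(Ae_3)=\tfrac{h_0}{s}\big(G(\cdot,t+s)-G(\cdot,t)\big)e_i$ for every $s$, which is the discrete form of your identity $\partial_t G_{2\times2}=-\tfrac1{h_0}\nabla^2 v$. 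Your proposal replaces the finite-difference family by the distributional $t$-derivative and invokes Schwarz's theorem for $\tilde y^h\in W^{1,2}$ together with testing against $C_c^\infty$; this is a legitimate reformulation, though it requires a bit more bookkeeping to organize the $\mathcal O(h^\alpha)$ corrections coming from $\mathcal M^h$ and $\Pi_\gamma$ and the conversion between $R^h$ and $Q^h$ (the paper's finite-difference formulation keeps all intermediate objects as genuine $W^{1,2}$ functions, which makes the compactness and limit-passage steps routine). One small point: the identification of $\partial_i(Ae_3)$ with $-\nabla^2 v$ is a consequence of $\partial_\tau V=A\tau$ from Lemma~\ref{lem3.4} alone; Proposition~\ref{new-constraint}, which you also cite, is not actually needed in this lemma.
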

\begin{proof}
{\bf 1.} The sequence $\{G^h\}$ is bounded in $L^2(\Omega^{h_0})$ by Proposition \ref{help} (i).  
Therefore it has a subsequence (which we do not relabel)
converging weakly to some $G$.
For a fixed $s>0$, consider now the sequence of vector fields
$f^{s,h}\in W^{1,2}((S_\gamma)^{h_0}, \mathbb{R}^3)$:
$$f^{s,h}(x+t\vec n^\gamma) = \frac{1}{sh^2} \Big[\Big(h_0\tilde y^h(x+(t+s)\vec n^\gamma) - h(x+(t+s)\vec n^\gamma)\Big)
- \Big(h_0\tilde y^h(x+t\vec n^\gamma) - h(x+t\vec n^\gamma)\Big)\Big]$$
We claim that $f^{s,h} \circ \tilde \phi_\gamma$ converges in
$L^2(\Omega^{h_0})$ to $Ae_3$. Indeed, by Proposition \ref{formule} one has:
\begin{equation*}
\begin{split}
f^{s,h}(x+t\vec n^\gamma) &= \frac{1}{{h^2}} 
\fint_t^{t+s} \Big(h_0\partial_{\vec n^\gamma} \tilde y^h(x+\sigma \vec n^\gamma) - h\vec n^\gamma\Big)
~\mbox{d}\sigma\\ &= \frac{1}{h} \fint_{t}^{t+s} 
\Big((Q^h)^T\nabla_h y^h (x+\sigma\vec n^\gamma) - \mathrm{Id}\Big)\vec n^\gamma~\mbox{d}\sigma,
\end{split}
\end{equation*}
and the convergence follows by Proposition \ref{help} (ii).

\medskip

{\bf 2.}
We claim that this convergence is actually weak in $W^{1,2}(\Omega^{h_0})$.
First, notice that the $x_3$ derivatives converge to $0$ in $L^2(\Omega^{h_0})$ 
by Proposition \ref{help} (ii):
\begin{equation*}
\partial_{3}(f^{s,h}\circ \tilde \phi_\gamma)   = \frac{1}{sh} ~\Big [ 
(Q^h)^T \Big(\nabla_h y^h(x+(t+s)\vec n^\gamma) - \nabla_h y^h(x+t\vec n^\gamma)\Big) \circ \tilde \phi_\gamma\Big ] \vec n^\gamma(x).
\end{equation*}
We now find the weak limit of the tangential gradients of $f^{s,h}$.
By Proposition \ref{formule}:
\begin{equation*}
\begin{split}
&\partial_{i} (f^{s,h} \circ \tilde \phi_\gamma) = \frac{1}{sh^2} ~
\Big [\Big(h_0\nabla \tilde y^h(x+(t+s)\vec n^\gamma)(\mathrm{Id} + (t+s)\Pi_\gamma)(\mathrm{Id} + t\Pi_\gamma)^{-1} \\
& \qquad\qquad\qquad\qquad\qquad
- h_0\nabla \tilde y^h(x+t\vec n^\gamma) - hs\Pi_\gamma (\mathrm{Id} + t\Pi_\gamma)^{-1}\Big) \Big] \circ \tilde \phi_\gamma  \partial_i \phi_\gamma\\
&=\frac{h_0}{sh^2} ~\Big [ 
(Q^h)^T \Big(\nabla_h y^h(x+(t+s)\vec n^\gamma) - \nabla_h y^h(x+t\vec n^\gamma)\Big)
(\mathrm{Id} + th/h_0\Pi_\gamma)(\mathrm{Id} + t\Pi_\gamma)^{-1}  \Big] \circ \tilde \phi_\gamma  \partial_i \phi_\gamma \\
&\quad + \frac{1}{sh}~\Big [ \Big((Q^h)^T\nabla_h y^h(x+(t+s)\vec n^\gamma) - \mathrm{Id}\Big)
s \Pi_\gamma(\mathrm{Id} + t\Pi_\gamma)^{-1} \Big] \circ \tilde \phi_\gamma  \partial_i \phi_\gamma .
\end{split}
\end{equation*}
By Proposition \ref{help} (ii), the following expression in the right hand side above:
$$\frac{1}{h}~\Big [ \Big((Q^h)^T\nabla_h y^h(x+(t+s)\vec n^\gamma) - \mathrm{Id}\Big)
\Pi_\gamma(\mathrm{Id} + t\Pi_\gamma)^{-1}] \Big] \circ \tilde \phi_\gamma $$
converges in $L^2(\Omega^{h_0})$ to $0$.
On the other hand, the first term in this expression 
converges weakly in $L^2(\Omega^{h_0})$ to:
$$\frac{h_0}{s} (G(x,t+s)) - G(x,t)),$$
by Lemma \ref{lem3.2} (i). This establishes the (weak) convergence of $f^{s,h}$ in $W^{1,2}(\Omega^{h_0})$.

\medskip

{\bf 3.} Equating the weak limits of tangential derivatives, we obtain:
\begin{equation*}
\begin{split}
\partial_i (A e_3) (x)  &=
\frac{h_0}{s}\Big(G(x,(t+s)) - G(x,t)\Big)e_i.
\end{split}
\end{equation*}
This proves the lemma.
\end{proof}

Finally, we have the following bound for convergence of the scaled energies $I^h$.

\begin{lemma}\label{liminf} (Parallel to \cite[Lemma 3.7]{lemopa7} and \cite[Theorem 1.3]{lemapa}.)
$$\liminf_{h\to 0} \frac{1}{h^4} I^h(y^h) \geq \frac{1}{2} \int_S \mathcal{Q}_2\left((\mathrm{sym }~ 
(G_0)_{2 \times 2} - (\epsilon_g)_{2\times 2}\right) + \frac{1}{24}
\int_S \mathcal{Q}_2\left(\nabla^2 v + (\kappa_g)_{2\times 2}
\right).$$ 
\end{lemma}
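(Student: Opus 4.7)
The plan is to follow the canonical route for $h^4$-scaled $\Gamma$-liminf bounds established in \cite{FJMhier} and adapted in \cite{lemopa7, lemapa} to the shell and prestrained setting. The structural ingredients already in place, namely Proposition \ref{formule} (rescaling to the fixed-thickness domain $(S_\gamma)^{h_0}$), Proposition \ref{help} (the quadratic rigidity estimate $\|\nabla_h y^h - R^h\|_{L^2}\leq Ch^2$), and Lemma \ref{lem3.6} (the affine-in-$t$ structure of the weak $L^2$-limit of the normalized strain $G^h$), leave only the final quadratic lower bound to be extracted.

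First I would use frame indifference of $W$ to replace $W(\nabla u^h(q^h)^{-1})$ by $W((R^h)^T\nabla u^h(q^h)^{-1})$ and combine with Proposition \ref{formule} to obtain
$$ \tfrac{1}{h^4} I^h(y^h) = \int_{S_\gamma} \fint_{-h_0/2}^{h_0/2} \tfrac{1}{h^4} W(K^h)\, \det\bigl(\mathrm{Id} + (th/h_0)\Pi_\gamma\bigr)\,dt\,dx, $$
with $K^h := (R^h)^T \nabla_h y^h (q^h)^{-1}$. Using $(q^h)^{-1} = \mathrm{Id} - h^2\epsilon_g - (h^2/h_0)t\kappa_g + O(h^3)$ together with $(R^h)^T\nabla_h y^h = \mathrm{Id} + h^2 G^h$ (in the $h^2$-normalization consistent with Proposition \ref{help}(i) and Lemma \ref{lem3.6}), this yields $K^h = \mathrm{Id} + h^2 F^h + o(h^2)$ with $F^h := G^h - \epsilon_g - (t/h_0)\kappa_g$. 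By Lemma \ref{lem3.6}, $F^h$ is bounded in $L^2(\Omega^{h_0})$ and converges, up to a subsequence, weakly to some $F$ whose tangential minor equals $(G_0)_{2\times 2} - (t/h_0)\nabla^2 v - (\epsilon_g)_{2\times 2} - (t/h_0)(\kappa_g)_{2\times 2}$.

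Next, the $\mathcal{C}^2$-regularity of $W$ at $\mathrm{Id}$ together with \eqref{quad} gives the Taylor estimate $\tfrac{1}{h^4} W(\mathrm{Id} + h^2 F^h) \geq \tfrac{1}{2}\mathcal{Q}_3(F^h) - \omega(h|F^h|)|F^h|^2$ with $\omega(s)\to 0$ as $s\to 0$. The standard truncation $F^h\,\mathbf{1}_{\{|F^h|\leq h^{-1/2}\}}$ still converges weakly to $F$ (since $|\{|F^h|>h^{-1/2}\}|\to 0$ by Chebyshev), uniformly eliminates the Taylor remainder on the good set, and the bad-set contribution is nonnegative by $W\geq 0$. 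Weak $L^2$-lower semicontinuity of the convex integral $\int\mathcal{Q}_3$ then yields $\liminf_h \tfrac{1}{h^4} I^h(y^h)\geq \tfrac{1}{2}\int_{S_\gamma}\fint_{-h_0/2}^{h_0/2}\mathcal{Q}_3(F(x,t))\,dt\,dx$.

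To conclude I invoke $\mathcal{Q}_3(F)\geq \mathcal{Q}_2(F_{\mathrm{tan}})$ from \eqref{quad}, decompose $F_{\mathrm{tan}} = A + (t/h_0) B$ with $A := (\mathrm{sym}\,G_0)_{2\times 2} - (\epsilon_g)_{2\times 2}$ and $B := -\bigl(\nabla^2 v + (\kappa_g)_{2\times 2}\bigr)$, and use that $\mathcal{Q}_2$ is a quadratic form depending only on symmetric parts: the cross term in $\fint \mathcal{Q}_2(A + (t/h_0)B)\,dt$ vanishes by oddness in $t$ while $\fint_{-h_0/2}^{h_0/2}(t/h_0)^2\,dt = 1/12$, producing the announced prefactors $\tfrac{1}{2}$ (stretching) and $\tfrac{1}{2}\cdot\tfrac{1}{12} = \tfrac{1}{24}$ (bending). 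The main obstacle is the Taylor/truncation step above: one must simultaneously preserve the weak limit under truncation, show the Taylor remainder is $o(1)$ in $L^1$, and keep the bad-set contribution nonnegative while carrying the prestrain correction $\epsilon_g + (t/h_0)\kappa_g$ through the expansion. This is a routine but non-trivial adaptation of the argument of \cite{FJMhier, lemopa7}, while the clean separation of stretching and bending at the end rests crucially on the affine-in-$t$ structure of $G$ furnished by Lemma \ref{lem3.6}.
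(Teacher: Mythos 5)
Your proposal is correct and is precisely the argument the paper intends when it states the lemma is ``parallel to'' \cite[Lemma 3.7]{lemopa7} and \cite[Theorem 1.3]{lemapa} (the paper gives no further details). The sequence frame indifference $\to$ Taylor expansion of $W$ at $\mathrm{Id}$ with the FJM-type truncation $\to$ weak lower semicontinuity of $\int\mathcal{Q}_3$ $\to$ reduction to $\mathcal{Q}_2$ via \eqref{marta} $\to$ splitting of the affine-in-$t$ tangential strain into a $t$-independent stretching part and a $t$-linear bending part, with the cross term killed by oddness and $\fint(t/h_0)^2\,dt=1/12$ producing the $1/2$ and $1/24$ prefactors, is exactly the canonical route. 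One notational remark: you use $(R^h)^T\nabla_h y^h=\mathrm{Id}+h^2 G^h$, whereas the text defines $G^h$ with a $1/h$ normalization; in fact the paper's $1/h$ is a typo (it is inconsistent with Proposition \ref{help}(i), with the $L^2$-boundedness asserted in Lemma \ref{lem3.6}, and with the remark that $2\,\mathrm{sym}\,G^h$ is the $h^2$-order term of the nonlinear strain), so your $1/h^2$ reading is the correct one and your derivation is internally consistent with it.
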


In view of Lemma \ref{lem3.4} and Lemma \ref{liminf}, it remains to
understand the structure of $G_0$.  
 
\begin{lemma}\label{lem3.9} (Parallel to \cite[Lemma 3.7]{lemopa7}.) 
Let $G_0$ be as in Lemma \ref{lem3.6}. 
Then we have the following convergence (up to a subsequence) weakly in $L^2(\Omega)$:
\begin{equation}\label{symG0tan}
\lim_{h\to 0} \frac{1}{h} \left\langle\partial_j \phi_\gamma, (
  [\mathrm{sym }~\nabla V^h] \circ \phi_\gamma)  \partial_i \phi_\gamma\right\rangle 
= \left\langle e_i, \left(\mathrm{sym }~G_0 + \frac{1}{2} A^2\right)_{2\times 2} e_j\right\rangle.
\end{equation}
\end{lemma}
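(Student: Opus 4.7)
The plan is to extract the leading $O(h^2)$ behaviour of $h\,\mathrm{sym}\nabla V^h$ and identify two sources of the limit: the $SO(3)$-constraint on $(Q^h)^T R^h$ produces the $\tfrac{1}{2}A^2$ term, while the order-$h^2$ strain correction captured by $G^h$ produces the $\mathrm{sym}\, G_0$ term.

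Starting from \eqref{nabla_Vh} and invoking Proposition \ref{formule}, I write the integrand as $(Q^h)^T\nabla_h y^h(\mathrm{Id}+(th/h_0)\Pi_\gamma) - \mathrm{Id}$, acting on tangent vectors. Substituting the decompositions $\nabla_h y^h \circ \tilde\phi_\gamma = R^h(\mathrm{Id}+h^2 G^h)$ and $(Q^h)^T R^h = \mathrm{Id}+hA^h$, expansion to $\mathcal{O}(h^3)$ gives
\begin{equation*}
(Q^h)^T \nabla_h y^h (\mathrm{Id}+ (th/h_0)\Pi_\gamma) - \mathrm{Id} \;=\; hA^h + h^2 G^h + \frac{th}{h_0}\Pi_\gamma + \frac{th^2}{h_0} A^h \Pi_\gamma + \mathcal{O}(h^3).
\end{equation*}
The crucial algebraic step is the orthogonality identity $((Q^h)^T R^h)^T (Q^h)^T R^h = \mathrm{Id}$, which converts the apparent $O(h)$ contribution into a second-order quantity: $\mathrm{sym}(hA^h) = -\tfrac{h^2}{2}(A^h)^T A^h$. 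The remaining $O(h)$-sized symmetric terms—$\tfrac{th}{h_0}\Pi_\gamma$ and the cross term $\tfrac{th^2}{h_0}\mathrm{sym}(A^h\Pi_\gamma)$—have $t$-independent coefficients, hence vanish under $\fint_{-h_0/2}^{h_0/2}\cdot\,dt$.

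Averaging and restricting to tangential entries, I obtain
\begin{equation*}
h\,\mathrm{sym}\nabla V^h \;=\; -\tfrac{h^2}{2}\bigl((A^h)^T A^h\bigr)_{2\times 2} + h^2(\mathrm{sym}\, G_0^h)_{2\times 2} + \mathcal{O}(h^3),
\end{equation*}
where $G_0^h(x) := \fint_{-h_0/2}^{h_0/2} G^h(x,t)\,dt$. Dividing by $h^2$ and passing to the weak $L^2(\Omega)$-limit, Lemma \ref{lem3.2} gives $A^h \to A$ strongly in $L^p(\Omega)$ for every $p<\infty$, so $(A^h)^T A^h \to A^T A = -A^2$ strongly in $L^2$ (using the skew-symmetry $A^T = -A$). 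Lemma \ref{lem3.6}, together with weak continuity of the $t$-average, yields $\mathrm{sym}\,G_0^h \rightharpoonup \mathrm{sym}\,G_0$ weakly in $L^2(\Omega)$. Combining these,
\begin{equation*}
\tfrac{1}{h}\mathrm{sym}\nabla V^h \;\rightharpoonup\; \bigl(\mathrm{sym}\,G_0 + \tfrac{1}{2}A^2\bigr)_{2\times 2} \qquad \text{weakly in } L^2(\Omega).
\end{equation*}

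The final pairing with $\partial_i\phi_\gamma$ and $\partial_j\phi_\gamma$ is obtained from $\partial_i\phi_\gamma = e_i + h^\alpha(\partial_i v_0)e_3$: since $h^\alpha \to 0$ and the limiting matrix is uniformly bounded, the out-of-plane entries multiplied by the $h^\alpha$-corrections contribute only vanishing terms, giving the stated identity. The principal technical difficulty is the careful bookkeeping of the polynomial expansion—in particular distinguishing which $O(h)$-terms vanish through odd-in-$t$ averaging and which feed back into the $O(h^2)$ coefficient. Once the $SO(3)$-identity is applied to $(Q^h)^T R^h$, both limiting contributions emerge through routine weak/strong convergence.
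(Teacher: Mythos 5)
Your argument is correct and is essentially the paper's own proof: both start from (\ref{nabla_Vh}) together with Proposition \ref{formule}, produce the $\frac{1}{2}A^2$ term from the $SO(3)$ identity (\ref{symAh}) (equivalently Lemma \ref{lem3.2}(iii)), produce the $\mathrm{sym}\,G_0$ term from the weak $L^2$ convergence of $G^h$ in Lemma \ref{lem3.6} combined with Lemma \ref{lem3.2}(i), and dispose of the remaining contributions via Proposition \ref{help}(i) and the convergence $\nabla\phi_\gamma\to(e_1,e_2)$ --- your single polynomial expansion with the $t$-linear terms killed by the $t$-average is merely a repackaging of the paper's two-term splitting of (\ref{nabla_Vh}). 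The only point worth flagging is that you implicitly (and correctly) use the normalization $G^h=h^{-2}\big((R^h)^T\nabla_h y^h-\mathrm{Id}\big)\circ\tilde\phi_\gamma$, which is the one consistent with the boundedness claim and with Lemma \ref{lem3.6}; the factor $1/h$ in the paper's displayed definition of $G^h$ is a typo.
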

\begin{proof}
We use the formula (\ref{nabla_Vh}) composed with $\phi_\gamma$ to calculate 
$\frac{1}{h}(\mathrm{sym }~ \nabla V^h) \circ \phi_\gamma$.
The last term in the right hand side gives:
$$\frac{1}{{h^2}} \mbox{sym} \left((Q^h)^T R^h - \mbox{Id}\right)_{tan} \circ \phi_\gamma
=  \frac{1}{h^2} \mbox{sym}\left((Q^h)^T R^h - \mbox{Id}\right)_{tan} \circ \phi_\gamma,$$
which converges in $L^2(\Omega)$ to $1/2 (A^2)_{tan}$ by Lemma \ref{lem3.2} (iii).
To treat the first term in the right hand side of (\ref{nabla_Vh}), notice that
for every $\tau\in T_xS_\gamma$:
\begin{equation*}
\begin{split}
&\langle \frac{1}{h^2} \Bigg[\fint_{-h_0/2}^{h_0/2} \nabla\tilde y^h(x+t\vec n^\gamma) (\mbox{Id} + t\Pi_\gamma)
- (Q^h)^T R^h(x) ~\mbox{d}t\Bigg] \circ \phi_\gamma, \tau\rangle \\
&\quad = \frac{1}{h^2}\langle (Q^h)^T \Bigg[\fint_{-h_0/2}^{h_0/2}\nabla_h y^h(x+t\vec n^\gamma) - R^h(x) 
~\mbox{d}t + \fint_{-h_0/2}^{h_0/2} t h/h_0 \nabla_h y^h \Pi_\gamma~\mbox{d}t\Bigg]  \circ \phi_\gamma, \tau\rangle\\
&\quad= \frac{1}{h^2}\langle (Q^h)^T R^h(x)
\Bigg[\fint_{-h_0/2}^{h_0/2}(R^h)^T\nabla_h y^h - \mbox{Id}~\mbox{d}t\Bigg]  \circ \phi_\gamma, \tau\rangle \\
&\qquad\qquad\qquad\qquad
+ \frac{h_0}{h}\langle\Big ( (Q^h)^T\Bigg[\fint_{-h_0/2}^{h_0/2} t \left(\nabla_h y^h- R^h\pi\right) 
~\mbox{d}t \Bigg]\Pi_\gamma(x) \Big )\circ \phi_\gamma,  \tau\rangle, 
\end{split}
\end{equation*}
where we used Proposition \ref{formule}.
Now, the second term in the right hand side above converges in $L^2(\Omega)$ to $0$, 
by Proposition \ref{help} (i).
Further, the matrix in the first term equals to:
$$((Q^h)^T R^h(x) \fint_{-h_0/2}^{h_0/2}G^h(x+t\vec n^\gamma)~\mbox{d}t ) \circ \phi_\gamma,$$
and by Lemma \ref{lem3.2} (i) and Lemma \ref{lem3.6}, it converges weakly 
in $L^2(\Omega)$ to $G_0$.
This completes the proof, in view of the fact that $\nabla \phi_\gamma$ converges to $(e_1, e_2)$.
\end{proof}

\medskip

\noindent {\bf Conclusion of the proof of Theorem
  \ref{shallow<1-liminf}}

It now remains to identify $B$ as a member of ${\mathcal B}_{v_0}$.
We have: 
\begin{equation}\label{strange} 
\frac 1h \langle \partial_j \phi_\gamma, ( [\mathrm{sym }~\nabla (V^h)] \circ \phi_\gamma) 
\partial_i \phi_\gamma \rangle = \frac 1h \langle e_i, \big(\mbox{sym}
\nabla(v^h_1, v^h_2) + h^\alpha \mbox{sym} (\nabla v^h_3 \otimes
\nabla v_0)\big) e_j\rangle. 
\end{equation} 
Therefore: 
$$ \mbox{sym} (G_0)_{2\times 2} = B  - \frac{A^2}{2}$$ 
where $B$ is given by the following limit: 
$$ B= \lim_{h\to 0} B^h=  \lim_{h\to 0}   \frac 1h 
\Big [\mbox{sym} \nabla(v^h_1, v^h_2) + h^\alpha \mbox{sym} (\nabla
v^h_3 \otimes \nabla v_0) \Big ] \in {\mathcal B_{v_0}}, $$ 
whose existence is assured by Lemma \ref{lem3.9}. \endproof

\section{Proof of Theorem \ref{shallow<1-limsup}}

Let $v$ and $B$ be given as in the statement of Theorem
\ref{shallow<1-limsup}.  Since $v$ satisfies the constraint
(\ref{constr2}) on a simply-connected $\Omega$, there exists a
displacement field $\tilde w\in W^{2,2}(\Omega, {\mathbb R}^2)$ 
for which: 
$$ \mbox{sym} \nabla \tilde w+ \mbox{sym} (\nabla v \otimes \nabla v_0) =0. $$
Let $V_\gamma= (\gamma \tilde w, v)\circ (\phi_\gamma)^{-1}$. 
Then it is straightforward as in \eqref{strange} to verify that $V_\gamma$ 
is a first order isometry of class $W^{2,2}$ on $S_\gamma$,
i.e. $\mbox{sym} \nabla V_\gamma=0$ on $S_\gamma$.  
Also, using the isomorphism ${\mathcal T}^\gamma$ in Lemma 4.1, 
we let $B_\gamma= ({\mathcal T}^\gamma)^{-1}(B)\in {\mathcal B}^\gamma$, where the latter space is identified 
in \cite{lemopa7} as the finite strain space of $S_\gamma$. Given
$V_\gamma$ and $B_\gamma$ as above, we proceed as in \cite[Theorem 2.2]{lemopa7}, 
as follows.
With a slight abuse of notation, we write:
\begin{equation}\label{marta}
\mathcal{Q}_2(x,F_{tan}) = \min\left\{\mathcal{Q}_3(F_{tan} + c\otimes e_3 + e_3\otimes c); ~~
c\in\mathbb{R}^3\right\}.
\end{equation}
The unique vector $c$, which attains the above minimum
will be denoted $c(x,F_{tan})$. By uniqueness, the map $c$ is linear in its 
second argument.  Also, for all $F\in {\mathbb R}^{3\times3}$, by $l(F)$ we denote 
the unique vector in ${\mathbb R}^3$, linearly depending on $F$,  for which:
$$\mbox{sym}\big(F - (F_{2\times 2})^*\big) 
= \mbox{sym}\big(l(F) \otimes e_3\big).$$

Recall  that $\gamma=h^\alpha$.
Given $B_{\gamma}\in\mathcal{B}^\gamma$, there exists a sequence of vector 
fields $w^h\in W^{1,2}(S_\gamma,\mathbb{R}^3)$
such that $\|\mbox{sym }\nabla w^h - B_\gamma\|_{L^2(S_\gamma)} $ converges to $0$.  
Without loss of generality, we may assume that $w^h$ are smooth, and
(by possibly reparameterizing the sequence) that:
\begin{equation}\label{norm}
\lim_{h\to 0} \sqrt{h}\|w^h\|_{W^{2,\infty}(S_\gamma)} = 0.
\end{equation}
In the same manner, we approximate $V_\gamma$ by a sequence $v^h\in W^{2,\infty}(S_\gamma,\mathbb{R}^3)$ such that, 
for a sufficiently small, fixed $\epsilon_0>0$:
\begin{equation}\label{vhapprox}
\begin{split}
& \lim_{h\to 0} \|v^h - V_\gamma\|_{W^{2,2}(S_\gamma)} = 0, 
\qquad h\|v^h\|_{W^{2,\infty}(S_\gamma)} \leq \epsilon_0,\\
&  \lim_{h\to 0}\frac{1}{h^2}~ \mu\left\{x\in S; ~~ v^h(x) \neq V(x)\right\} =0.
\end{split}
\end{equation}
The existence of such $v^h$ follows by partition of unity and a truncation
argument, as a special case
of the Lusin-type result for Sobolev functions
(see \cite[Proposition 2]{FJMhier}).

Finally, define the sequence of  deformations $u^h\in W^{1,2}((S_\gamma)^{h},\mathbb{R}^3)$ by:
\begin{equation}\label{rec_seq}
\begin{split}
u^h(x+t\vec n^\gamma) & = x + h v^h(x) + h^2 w^h(x) \\
& \qquad + {t}\vec n^\gamma(x) + {t}h\Big(\Pi_\gamma v^h_{tan} - \nabla (v^h\vec n^\gamma)\Big)(x)\\
& \qquad - t h^2 (\vec n^\gamma)^T \nabla w^h
+ {t} h^2 d^{0,h}(x) + \frac 12 t^2h d^{1,h}(x).
\end{split}
\end{equation}
The vector fields $d^{0,h}, d^{1,h}\in W^{1,\infty}(S_\gamma,\mathbb{R}^3)$ are defined so that:
\begin{equation}\label{nd01h}
\lim_{h\to 0} \sqrt{h} \left(\|d^{0,h}\|_{W^{1,\infty}(S_\gamma)} + \|d^{1,h}\|_{W^{1,\infty}(S_\gamma)}\right) = 0
\end{equation}
and that, in $L^2(\Omega)$:
\begin{equation}\label{warp}
\begin{split}
\lim_{h\to 0} d^{0,h} \circ \phi_\gamma & =  l(\epsilon_g) - \frac 12 |\nabla v|^2 e_3  
+  c \Big(B -  \frac 12 \nabla v \otimes \nabla v 
- (\mbox{sym } \epsilon_g)_{2\times2} \Big),\\
\lim_{h\to 0} d^{1,h} \circ \phi_\gamma& =  l(\kappa_g) + c\Big(-\nabla^2 v - (\mbox{sym }\kappa_g)_{2\times 2}\Big).
\end{split}
\end{equation}
Now, the convergence statements of Theorem  \ref{shallow<1-limsup}
are verified by straightforward calculations as in the proofs of
\cite[Theorem 2.2]{lemopa7} and \cite[Theorem 1.4] {lemapa}.  

\begin{remark}
One may define the recovery sequence explicitly on $\Omega$, without
the diagonal argument presented in the proof above. Namely we proceed as follows. 
 
We approximate $V=(\tilde w, v)$ by a sequence $V^h = (\tilde w^h, v^h) \in W^{2,\infty}(\Omega,\mathbb{R}^3)$  
such that, for a sufficiently small, fixed $\epsilon_0>0$:
\begin{equation}\label{vhapprox}
\begin{split}
& \lim_{h\to 0} \|V^h - V\|_{W^{2,2}(\Omega)} = 0, 
\qquad h\|V^h\|_{W^{2,\infty}(\Omega)} \leq \epsilon_0,\\
&  \lim_{h\to 0}\frac{1}{h^2}~ \mu\left\{x\in \Omega; ~~ V^h(x) \neq V(x)\right\} =0.
\end{split}
\end{equation}
Also, let $w^h:\Omega\to {\mathbb R}^3$ be such that 
$$ \displaystyle B = \lim_{h\to 0}   
\Big [\mbox{sym} \nabla(w^h_1, w^h_2) + \mbox{sym} (\nabla w^h_3
\otimes \nabla v_0) \Big ]. $$  
Without loss of generality, we may assume that $w^h$ are smooth, and that:
\begin{equation}\label{norm}
\lim_{h\to 0} \sqrt{h}\|w^h\|_{W^{2,\infty}(\Omega)} = 0.
\end{equation} The recovery sequence is then given by: 
\begin{equation}\label{recoveryseq}
\begin{split} 
y^h(x, t) & = u^h (x+ h^\alpha v_0(x) e_3 + th\vec n^\gamma) \\ & 
= \left[\begin{array}{c}x \\ h^\alpha v_0 (x) \end{array}\right] 
+ h \left[\begin{array}{c} h^{\alpha} \tilde w^h(x) \\ v^h(x)\end{array}\right] +  
h^2 \left[\begin{array}{c}    w^h_{tan}  \\ h^{-\alpha}
    w^h_3 \end{array}\right] \\ &  \qquad \qquad 
+ th \left[\begin{array}{c}-h^\alpha\nabla v_0(x)\\1\end{array}\right]  
+ th \left[\begin{array}{c}-h\nabla v^h(x)\\1\end{array}\right]   \\ &
\qquad \qquad  -th^3 
\left[\begin{array}{c} 0 \\  h^{-\alpha} \nabla w^h_3 \end{array}\right]
+  h^3 t d^{0,h}(x) + \frac{1}{2}h^3 t^2 d^{1,h}(x),
\end{split} 
\end{equation} where the vector fields $d^{0,h}, d^{1,h}\in W^{1,\infty}(\Omega,\mathbb{R}^3)$ are defined similarly as before.

\end{remark}

\end{document}